\begin{document}

\title{On the Convergence of L-shaped Algorithms for Two-Stage Stochastic Programming}

\author{John R. Birge\footnote{The University of Chicago, Booth School of Business (john.birge@chicagobooth.edu).} \and Haihao Lu\footnote{The University of Chicago, Booth School of Business (haihao.lu@chicagobooth.edu).} \and Baoyu Zhou\footnote{University of Michigan, Department of Industrial and Operations Engineering (zbaoyu@umich.edu).}}
\date{}
\maketitle

\begin{abstract}
  In this paper, we design, analyze, and implement a variant of the two-loop L-shaped algorithms for solving two-stage stochastic programming problems that arise from important application areas including revenue management and power systems. We consider the setting in which it is intractable to compute exact objective function and (sub)gradient information, and instead, only estimates of objective function and (sub)gradient values are available. Under common assumptions including fixed recourse and bounded (sub)gradients, the algorithm generates a sequence of iterates that converge to a neighborhood of optimality, where the radius of the convergence neighborhood depends on the level of the inexactness of objective function estimates. The number of outer and inner iterations needed to find an approximate optimal iterate is provided. More specifically, suppose $\beps$ is the error level of the function evaluation, we show that the L-shaped method with constant step-size can find an $\beps$-optimal solution within $O(\frac{1}{\beps^3})$ total iterations. Suppose the optimal objective is known, we propose a Polyak-type step-size that can find an $\beps$-optimal solution within $O(\frac{1}{\beps^2})$ total iterations. It turns out that the rate $O(\frac{1}{\beps^2})$ also matches with the complexity lower bound for first-order non-smooth optimization even in the exact situation.
Furthermore, assuming the problem satisfies the sharpness condition, which holds when the support of the scenario is finite, then we can improve the complexity of the algorithm with constant step-size to $O(\frac{1}{\beps})$ and Polyak-type step-size to $O\pran{\log\pran{\frac{1}{\beps}}}$. Finally, we show a sample complexity result for the algorithm with a Polyak-type step-size policy that can be extended to analyze other situations. We also present a numerical study that verifies our theoretical results and demonstrates the superior empirical performance of our proposed algorithms over classic solvers.
\end{abstract}

\section{Introduction}
Two-stage stochastic programming is a fundamental class of optimization problems formulated as
\begin{equation}\label{prob.main}
\begin{aligned}
\min_{x\in X}\ F(x) + \mathbb{E}_{\xi} \left[\min_{y(\xi)\in Y(x;\xi)}\ Q(x,y(\xi);\xi)\right], 
\end{aligned}
\end{equation}
where $\mathbb{E}_{\xi}[\cdot]$ represents the expectation taken with respect to the distribution of random variable $\xi$, and decision variables $(x,y(\xi))$ are chosen from feasible sets $(X,Y(x;\xi))$ respectively. Problem \eqref{prob.main} considers decision variables from two different stages: the first-stage decision variable $x\in X$ is determined before any realization of random variable $\xi$, while the second-stage decision variable $y(\xi)$ is evaluated after observing a realization of $\xi$. The goal of two-stage stochastic programming in the form of \eqref{prob.main} is to optimize the combination of objective functions from both stages. 
This type of problems has numerous practical applications, such as portfolio selection (\cite{ShapDentRusz21,DantInfa93}), manufacturing resource allocation (\cite{BirgLouv11}), inventory control (\cite{DillOlivAbba17}), supply chain management (\cite{MaruEksiHuan14}), disaster management (\cite{Noya12}), water resource management (\cite{HuanLouc00}), and transportation network protection (\cite{LiuFanOrdo09}), just to name a few. In many applications, the objective function in the first stage, $F(x)$, is a linear function, and the problem in the second stage $Q(x,y(\xi);\xi)$ can be formulated as linear programming, in which case, the problem is called two-stage stochastic linear programming (two-stage SLP).

For example, in inventory management problems, the decision maker aims to minimize the overall net cost (or equivalently, maximize the overall net income) by buying commodities from factories and selling them to customers at a higher price. This whole process involves buying cost, holding cost, and selling profit, and such problem could be formulated as a two-stage stochastic programming; e.g.,
\begin{equation*}
\begin{aligned}
\min_{x\in\mathbb{R}^n}\ &p^Tx + h^Tx + \mathbb{E}_{\xi}\left[\min_{y(\xi)\in Y(x;\xi)}\ Q(x,y(\xi);\xi)\right] \\
\text{s.t.}\ &p^Tx \leq b, \quad x\geq 0
\end{aligned}
\end{equation*}
and
\begin{equation*}
\begin{aligned}
\min_{y(\xi)\in Y(x;\xi)}\ Q(x,y(\xi);\xi) = \min_{y(\xi)\in\mathbb{R}^n}\ &-s(\xi)^Ty(\xi) \\
\text{s.t.}\ &0\leq y(\xi)\leq x \\
 &y(\xi)\leq r\cdot c(\xi),
\end{aligned}
\end{equation*}
where $p\in\mathbb{R}^n$, $h\in\mathbb{R}^n$, $b\in\mathbb{R}$, $s(\xi)\in\mathbb{R}^n$, $r\in(0,1)$ and $c(\xi)\in\mathbb{R}^n$ are all parameters. In this two-stage stochastic programming, the first-stage decision variable $x$ represents the number of commodities bought from factories, while the first-stage feasible set $X:=\{x\in\mathbb{R}^n:p^Tx \leq b \text{ and } x \geq 0\}$ includes two classes of constraints: the non-negativity constraint $x\geq 0$ and the budget constraint $p^Tx \leq b$, where $p\in\mathbb{R}^n$ means the per-unit buying cost for $n$ different commodities and $b\in\mathbb{R}$ is the total budget. 
By defining $h\in\mathbb{R}^n$ as the per-unit holding cost for $n$ different commodities, the first-stage objective function $(p^Tx + h^Tx)$, which can be seen as $F(x)$ in \eqref{prob.main}, means the total buying and holding costs for $n$ commodities.
In terms of the second-stage problem, we have $\xi$ as the randomness in the nature, such as weather, time, and date, that affects the number of customers visiting the store to buy commodities, and the decision variable $y(\xi)$ as the number of commodities sold to customers under scenario $\xi$. Meanwhile, the second-stage feasible set $Y(x;\xi) := \{y(\xi)\in\mathbb{R}^n: 0\leq y(\xi)\leq x \text{ and } y(\xi) \leq r\cdot c(\xi)\}$ includes two natural constraints: (i) $0\leq y(\xi)\leq x$ means the number of commodities sold to customers must be non-negative and does not exceed what have been bought from factories, and (ii) $y(\xi)\leq r\cdot c(\xi)$, where $r\in(0,1)$ is a ratio parameter and $c(\xi)$ represents the number of customers expected to be visiting the store under scenario $\xi$. Finally, with $s(\xi)\in\mathbb{R}^n$ being the per-unit selling price of commodities under scenario $\xi$, we may set $Q(x,y(\xi);\xi) = -s(\xi)^Ty(\xi)$ to minimize the total cost (equivalently, maximizing the total profit) in the second stage.
In the aforementioned example, under any scenario $\xi$, we always have constraint sets as polytopes and objective functions being linear for both stages, which turns the problem into a two-stage stochastic linear programming. 

Many optimization methods have been developed for solving two-stage stochastic programming~\eqref{prob.main}. The most popular approach may be reformulating \eqref{prob.main} as large-scale linear programming (LP) by explicitly writing down the expectation term for the second stage problem. Such reformulation works only when the number of scenarios is finite. However, due to the curse of dimensionality, directly solving the LP reformulation may quickly become intractable as the size of the problem gets larger. As such, decomposition techniques have been proposed to solve this large LP by taking advantage of its structures. In general, the decomposition techniques could be divided into two classes: primal decomposition and dual composition. Primal decomposition methods decompose the LP reformulation over stages, while dual decomposition methods deal with the reformulation over scenarios (\cite{RosaRusz96}). Among these decomposition techniques, the L-shaped algorithm, a class of Benders decomposition method (\cite{Bend62}), originally proposed by \cite{VanWets69}, is particularly well-known due to its practical performance in solving large-scale two-stage stochastic programming problems. In order to make sure that the sub-problem in the L-shaped algorithm has a finite solution, the regularized L-shaped method was proposed (\cite{Rusz86}), which is now a base algorithm used in two-stage stochastic programming solvers~(\cite{BielJoha22}). For many large-scale two-stage stochastic programming problems, it is prohibitive to compute true function and (sub)gradient values due to the problem size and nature of stochasticity. Therefore, in this paper, we focus on an inexact regularized L-shaped algorithm, which only evaluates inexact function and (sub)gradient estimates. \cite{Lema86,Kiwi90} showed that such an inexact regularized L-shaped algorithm is equivalent to an inexact proximal bundle method in nonsmooth optimization, which was later on shown to asymptotically converge to a near-optimal solution~(\cite{OlivSagaSche11,OlivSagaLema14}). A long-existing open problem is whether there is a complexity theory for the inexact regularized L-shaped method (and for the inexact proximal bundle method), i.e., how many iterations are needed to identify a solution with a certain quality. In this paper, we answer this open question by presenting the complexity theory of inexact regularized L-shaped algorithms under different assumptions. More specifically, the contributions of this paper can be summarized as:

\begin{enumerate}
    \item We present the first complexity result for the inexact regularized L-shaped method. Suppose the error in the function evaluation is $\beps$. We study the quality of the solution that the inexact regularized L-shaped algorithm can find, and the total outer and inner iterations needed before such a solution can be found. We present the complexity results for inexact regularized L-shaped algorithm with two step-size rules: a basic constant step-size policy (Section \ref{sec.const_step_policy}), and an optimal step-size policy (Section \ref{sec:ideal_ss}), which mimics Polyak's step-size rule for non-smooth optimization. Furthermore, if the number of scenarios is finite (i.e., the random variable $\xi$ has finite support), the problem~\eqref{prob.main} satisfies a sharpness condition, under which the complexity results can be further improved. These complexity results are summarized in Table \ref{table.conv_summary}.
    \item Under proper regularity conditions, we present the sample complexity result of the inexact regularized L-shaped method (Section \ref{sec.sample_const}), namely, the number of total samples needed in order to find a solution that is near-optimal 
    with high probability.
    \item We present a numerical study (Section \ref{sec.num_results}), which not only validates our theoretical results but also
    showcases that the inexact regularized L-shaped algorithm with our proposed parameters has superior performance compared to the state-of-the-art solvers for two-stage SLP for large-scale instances.
\end{enumerate}

\begin{table}[]
\caption{Summary of the complexity results for the regularized L-shaped methods under different regularity conditions and step-size rules, where $\beps$ is the error level in the inexact function evaluation.}\label{table.conv_summary}
\centering
\begin{tabular}{|c|c|c|c|}
\hline
Problem Setting & Optimality Gap & $\#$ of Outer Iterations & $\#$ of Inner Iterations  \\ \hline
\begin{tabular}[c]{@{}c@{}} Without sharpness; \\ constant step size \end{tabular} & $\mathcal{O}(\beps)$ & $\mathcal{O}\left(\frac{1}{\beps}\right)$ & $\mathcal{O}\left(\frac{1}{\beps^3}\right)$ \\ \hline
\begin{tabular}[c]{@{}c@{}} Without sharpness; \\ optimal step size \end{tabular} & $\mathcal{O}(\beps)$ & $\mathcal{O}\left(\log\left(\frac{1}{\beps}\right)\right)$ & $\mathcal{O}\left(\frac{1}{\beps^2}\right)$  \\ \hline
\begin{tabular}[c]{@{}c@{}} With sharpness; \\ constant step size \end{tabular} & $\mathcal{O}(\beps)$ & $\mathcal{O}\left(\frac{1}{\beps}\right)$ & $\mathcal{O}\left(\frac{1}{\beps}\right)$ \\ \hline
\begin{tabular}[c]{@{}c@{}} With sharpness; \\ optimal step size \end{tabular} & $\mathcal{O}(\beps)$ & $\mathcal{O}\left(\log\left(\frac{1}{\beps}\right)\right)$ & $\mathcal{O}\left(\log\left(\frac{1}{\beps}\right)\right)$ \\ \hline
\end{tabular}
\end{table}

\subsection{Literature Review}
One of the most popular methods for solving two-stage SLP problems is by reformulating the problem as a large linear programming (LP) and then solving it either using a general-purpose LP solver (such as Gurobi) or specialized algorithms. Since such LPs are usually huge, and share a certain block diagonal structure, decomposition algorithms, including augmented Lagrangian decomposition (\cite{MulvRusz95,Rusz95}), Dantzig-Wolfe decomposition (\cite{DantWolf61}), Benders decomposition (\cite{Bend62}), have been proposed to solve large-scale instances. 
In particular, the Benders decomposition (\cite{Bend62}) to a primal problem is equivalent to the Dantzig-Wolfe decomposition (\cite{DantWolf61}) to the corresponding dual problem (\cite{DantThap03}).
Both methods have obtained numerical success for solving large-scale two-stage SLP~(\cite{RosaRusz96}).  
Other than decomposition methods, \cite{Kall79} proposes a revised simplex method with reduced computational cost for solving two-stage SLP problems. More recently, \cite{lubin2013parallel} presents a parallel distributed-memory simplex method that solved such an LP with 400 million variables on 1024 computing nodes.

The L-shaped algorithm (\cite{VanWets69}), as a special case of Benders decomposition, is well-known for its outstanding empirical performance in solving large-scale two-stage stochastic programming problems. When solving convex problems, the L-shaped algorithm generates a piecewise linear approximation function that performs as a lower bound to the original problem at every iteration. 
Based on the naive L-shaped algorithm framework, many advanced decomposition and regularization techniques, such as level-set decomposition (\cite{FabiSzok07}), regularized decomposition (\cite{Rusz86}, \cite{RuszSwie97}), and trust-region regularization (\cite{LindWrig03}), have been applied for improving its practical performance. 
These aforementioned decomposition techniques are implemented in a solver \texttt{StochasticPrograms.jl}\footnote{https://github.com/martinbiel/StochasticPrograms.jl}, which is an efficient implementation of the L-shaped algorithm in Julia programming language (\cite{BielJoha22}). \cite{LindShapWrig06} also studied the empirical performance of the L-shaped algorithm proposed in \cite{LindWrig03}. Other than the L-shaped algorithm, \cite{ErmoWets88} introduced a stochastic quasi-gradient method, and \cite{HigleSen96} proposed a stochastic decomposition method for solving two-stage SLP. Recently, \cite{LanShap23} discusses multiple reliable numerical algorithms for solving multi-stage convex stochastic optimization problems, and we refer interested readers to the references therein for the recent numerical advances of L-shaped algorithms.

Meanwhile, a two-stage SLP problem can also be seen as a single-stage nonsmooth optimization problem after optimizing the inner problems. From the nonsmooth optimization perspective, the regularized L-shaped algorithm is equivalent to a proximal bundle method; see \cite{Lema86,Kiwi90}. At every iteration, proximal bundle methods build up a piecewise linear lower bound function by generating the first-order Taylor series approximation (i.e., bundle cuts) at sample points, and update iterates via solving a quadratic subproblem. The asymptotic convergence behavior of proximal bundle methods has been well-developed in both convex (\cite{BonnGilbLemaSga06,MakeNeit92,Kiwi06}) and nonconvex settings (\cite{AtenSagaSilvSolo23,FuduGaudGial04,HareSaga10,HareSagaSolo16,NollProtRond08,VlceLuks01}). There was extensive previous literature studied the complexity theory of the proximal bundle method: \cite{AstoFranFuduGorg13,DiazGrim21,DuRusz17,Kiwi00,LianGuigMont22,LianMont21a,LianMont21b,LianMontZhan23}. However, for a large-scale two-stage SLP, it is in general too expensive to construct an exact lower bound function. Thus, the \emph{inexact} regularized L-shaped method, where an inexact lower bound function is constructed by utilizing part of the samples in every iteration, is the variant of L-shaped method that is widely used in practice (\cite{VerwAhmedKleyNemhShap03}).
In this paper, we present the first complexity result of an inexact regularized L-shaped algorithm. The convergence theory of such inexact oracle was studied in \cite{OlivSagaSche11,OlivSagaLema14}. The difference is that \cite{OlivSagaSche11,OlivSagaLema14} only proved the asymptotic convergence properties of inexact proximal bundle algorithms and our work presents the first complexity theory for this algorithm. In particular, this paper presents the number of operations (i.e., inner iterations) needed by an inexact regularized L-shaped algorithm to find near-optimal solutions of two-stage SLP. It is worth mentioning that the complexity theory of interior-point methods and other related algorithms for solving two-stage SLP has also been explored: the iteration complexity of a weighted barrier decomposition method was discussed in \cite{MehrOzev10}; \cite{BirgeQi88} showed that Schur complement could help to lower the operation complexity of interior-point methods for two-stage SLP; \cite{Zhao99} and~\cite{Zhang01} considered operation complexity of interior-point methods for two-stage SLP, while the operation complexity of a log-barrier method with Benders decomposition was developed in \cite{Zhao01}. We note that part of the proof technique in our analysis was inspired by the recent work on the complexity theory developed for the exact proximal bundle method~(\cite{DiazGrim21}). 

Moreover, there are a few publicly available computational packages for solving two-stage SLP. For example, \texttt{PySP} is a stochastic programming tool in Python utilizing the progressive hedging decomposition strategy (\cite{Hart17pyomo}); \texttt{cvxstoc} (\cite{Ali2015disciplined}) is another open-source package in Python for solving convex stochastic programming problems, which is based on the \texttt{cvxpy} package (\cite{DiamBoyd16}) and applies interior-point method and splitting conic solver (\cite{OdonChuPariBoyd16}). AMPL also has its own package for solving stochastic programming problems, e.g., \texttt{FortSP} (\cite{ElliMitrZver10}), which has options for cutting plane and stochastic decomposition methods. Lastly, \texttt{StochasticPrograms.jl} (\cite{BielJoha22}) is a Julia package using L-shaped algorithm for solving two-stage stochastic programming problems.

\subsection{Notation}
Let $\mathbb{R}$ denote the set of real numbers and $\overline{\mathbb{R}}$ denote the extended set of real numbers such that $\overline{\mathbb{R}}:=\mathbb{R}\cup\{+\infty\}$.  Let $\mathbb{R}^n$ denote the set of $n$-dimensional real vectors; $\mathbb{R}^{m\times n}$ denote the set of $m$-by-$n$-dimensional real matrices. 
Let $\|\cdot\|$ denote the Euclidean norm, and let $\partial f(x)$ denote the subdifferential set of $f$ at $x$. Let $\mathcal{N}(x;X)\subset\mathbb{R}^n$ denote the normal cone at point $x\in\mathbb{R}^n$ to convex set $X\subset\mathbb{R}^n$, i.e., $\mathcal{N}(x;X):=\{y\in\mathbb{R}^n:y^T(\tilde{x} - x) \leq 0\ \text{ for all } \tilde{x}\in X\}$. We refer to the convex hull of set $X$ as $\textbf{co}(X)$.

\subsection{Organization}
The rest of this paper is organized as follows. Our problem of interest and proposed algorithm framework are introduced in Section~\ref{sec.problems}. In Section~\ref{sec.perf_guarantee}, we present the our algorithm's convergence properties with different step-size policies and problem settings. The empirical performance of our algorithm is discussed in Section~\ref{sec.num_results}.

\section{Two-Stage Stochastic Linear Programming and Inexact Regularized L-shaped Algorithm}\label{sec.problems}
We consider a two-stage stochastic linear programming problem with fixed recourse (two-stage SLP):
\begin{equation}\label{prob.first_stage}
\begin{aligned}
\min_{x\in \mathbb{R}^n} \ & f(x):=c^{T}x+\EE_{\xi}[Q(x;\xi)] \\
\text{s.t.}\ &Ax = b,\  x\geq 0,
\end{aligned}
\end{equation}
where
\begin{equation}\label{prob.second_stage}
\begin{aligned}
Q(x;\xi)=\min_{y\in\RR^{l}} \left\{q(\xi)^Ty: T(\xi)x+Wy=h(\xi),\ y\geq 0\right\},
\end{aligned} 
\end{equation}
$A\in\RR^{m\times n}$, $b\in\RR^{m}$, $c\in\RR^{n}$, $W\in\RR^{r\times l}$, $\EE_{\xi}[\cdot]$ represents expectation taken with respect to the random variable $\xi$, and $(T(\xi),q(\xi),h(\xi))\in \RR^{r\times n}\times\RR^{l}\times\RR^{r}$ for all $\xi$. For any realization $\xi$, $Q(x;\xi)$ can be obtained by solving a linear program over variable $y$, and we call it the second-stage problem. The outer minimization over the variable $x$ is called the first-stage problem or the master problem. We denote $X\subseteq\mathbb{R}^n$ as the feasible region of the master problem. 

In the second-stage problem~\eqref{prob.second_stage}, $W$ does not depend on the realization of $\xi$, which is called the fixed recourse condition for two-stage SLP. The fixed recourse property is a natural condition to make sure the stability and well-posedness of the second-stage problem and it is well-satisfied in practical problems~(\cite{ShapDentRusz21,OlivSagaSche11}). 
This paper studies the two-tage SLP under the fixed recourse condition to avoid additional notational burdens, and we believe our results can be extended to the general setting where $W$ depends on the realization of $\xi$, as long as the second-stage problem is well-defined and stable.

The L-Shaped algorithm is a classic algorithm for solving two-stage SLP (\cite{Birg88}). It can be viewed as a Benders decomposition of the two-stage stochastic linear programming~\eqref{prob.first_stage}. We here consider an inexact regularized variant of the L-shaped method, which is perhaps the most practical variant of the algorithm (\cite{BielJoha22}). 
In particular, the inexactness provides our algorithm with the ability to solve large-scale optimization problems, while regularization is a common technique that prevents the algorithm from generating unbounded iterates. The method can also be viewed as an inexact proximal bundle method in non-smooth optimization (\cite{OlivSagaSche11}).
To formally introduce the algorithm, we start by providing a (sub)gradient estimation of the function $f(x)$. Notice that the dual of the second-stage problem is,
\begin{align*}
\max_{u\in\RR^{r}} \ & u^{T}(h(\xi)-T(\xi)x)\\
\text{s.t.} \ & W(\xi)^{T}u\leq q(\xi).
\end{align*}
For any $U(x;\xi) \in \arg\max_{u\in\RR^{r}} \left\{u^{T}(h(\xi)-T(\xi)x): W(\xi)^{T}u\leq q(\xi)\right\}$, it follows from the strong duality conditions on linear programming problems and the chain rule that
\begin{equation}\label{eq.subgradient}
-T(\xi)^TU(x;\xi) \in \partial Q(x;\xi)\ .
\end{equation} 

Then it follows \eqref{eq.subgradient} and the subdifferential sum rule (\cite{Clar90}) that
\begin{equation*}
g(x) := c - \mathbb{E}_{\xi}[T(\xi)^TU(x;\xi)] \in\partial f(x).
\end{equation*}

Algorithm~\ref{alg.main} formally presents the inexact regularized L-shaped method. It is a two-loop algorithm. The algorithm is initialized with a solution $x_{0,0}\in X$, a step-size sequence $\{\rho_k\}$ and a descent parameter $\beta\in (0,1)$. At the beginning of each outer iteration $k$, a set of samples $S_k$ is generated from the distribution of $\xi$. Using the sample set $S_k$, we can obtain the function estimation and the (sub)gradient estimation for the $k$-th outer iteration:
\begin{equation}\label{eq:estimation}
    \hf_{k}(x)=c^{T}x+\frac{1}{|S_k|}\sum_{i\in S_k}Q(x;\xi_i) \ , \hg_{k}(x)=c- \frac{1}{|S_k|}\sum_{i\in S_k} T(\xi_i)^T U(x;\xi_i) \in \partial \hf_k(x)\ .
\end{equation}
We can then create the initial bundle function $f_{k,0}:\mathbb{R}^n\to\RR$ as
\begin{equation}\label{eq:initialization}
    f_{k,0}(x)= \hf_k(\xk{0})+\hg_{k}(\xk{0})^T (x-\xk{0}).
\end{equation} 
$f_{k,0}(x)$ provides a lower bound approximation of $\hf_k(x)$ due to the convexity of $\hf_k$. Next, we run the inner loop of the algorithm, which updates the current solution $x_{k,t}$ and the bundle function $f_{k,t}(x)$. More formally, at the $k$-th outer iteration and $t$-th inner iteration, we have a lower bound approximation (bundle) function $f_{k,t}(x)$. We then compute the next inner iterate $\xk{t+1}$ by minimizing a proximal step
\begin{equation}\label{eq.proximal_bundle}
\xk{t+1}=\arg\min_{x\in X} \left\{\fk{t}(x)+\frac{\rho_k}{2}\|x-\xk{0}\|^2\right\},
\end{equation}
where $\rho_k>0$ is the step-size for the $k$-th outer iteration. In our theoretical analysis later in Section \ref{sec.perf_guarantee}, we will discuss the implication of different step-size options.  

Next, the algorithm checks whether there is a sufficient decay at the updated inner iterate $\xk{t+1}$ from \eqref{eq.proximal_bundle} in terms of the noisy function $\hf_k$, i.e.,
\begin{equation}\label{eq.suff_decay}
    \beta(\hf_{k}(\xk{0})-\fk{t}(\xk{t+1})) \leq \hf_{k}(\xk{0})-\hf_{k}(\xk{t+1}).
\end{equation}
If \eqref{eq.suff_decay} is satisfied,  we call it a {\sl serious step}, terminate the inner loop, and go to the next outer iteration with initial solution $x_{k+1,0}\leftarrow\xk{t+1}$. Otherwise, we call it a {\sl null step}, and update the bundle function $f_{k, t+1}(x)$. There are different options for updating the bundle function $f_{k, t+1}(x)$.
The traditional bundle method (\cite{Kell60,Lema78}) utilizes the past sub-gradient information to update the bundle function $f_{k,t+1}(x)$, i.e.,
\begin{equation}\label{eq:generate-full-bundle}
    \fk{t+1}(x)=\max_{j\in J_{k,t+1}\subseteq \{0,\ldots,t+1\}} \left\{\hf_k(\xk{j}) + \hg_k(\xk{j})^T (x- \xk{j})\right\}\ ,
\end{equation}
where $J_{k,t+1}$ is an index set that indicates which past inner iterations are included for generating the bundle function. In particular, if $J_{k,t+1}= \{0,...,t+1\}$ for any $k\geq 0$, \eqref{eq:generate-full-bundle} recovers the bundle function with full memory. Since the full-memory variant of the algorithm keeps track of all previous 
linear lower bound functions, which can be expensive to store when $t$ is large, people in practice usually utilize a limited-memory variant of methods where  $J_{k,t+1} = \{\max\{0,t-m_b+2\},\ldots,t+1\}$ with a predetermined memory size $m_b\geq 1$. Another typical cut used in proximal bundle method is $$\fk{t}(\xk{t+1})+ \sk{t+1}^T(x-\xk{t+1})\ ,$$ where $s_{k,t+1}:= \rho_k(x_{k,0} - x_{k,t+1})$. Notice that $s_{k,t+1} \in \partial f_{k,t}(x_{k,t+1}) + \mathcal{N}(\xk{t+1};X)$ by \eqref{eq.proximal_bundle}, thus 
\begin{equation}\label{eq:func_est_lb_sec2}
\fk{t}(\xk{t+1})+ \sk{t+1}^T(x-\xk{t+1})\le f_{k,t}(x) \le \hf_k(x)
\end{equation}
provides a valid lower bound of the function estimation $\hf_k(x)$ given that $f_{k,t}(x)$ is a lower bound of $\hf_k(x)$. Thus, we can utilize the following class of bundle function, which provides a valid lower bound for $\hf_k(x)$:
\begin{equation}\label{eq:limited-memory}
\begin{aligned}
\fk{t+1}(x)=\max \Big\{&\max_{j\in J_{k,t+1}^g\subseteq \{0,\ldots,t+1\}} \left\{\hf_k(\xk{j}) + \hg_{k}(\xk{j})^T(x- \xk{j}) \right\}, \\
&\max_{j\in J_{k,t+1}^s\subseteq \{1,\ldots,t+1\}} \left\{\fk{j-1}(\xk{j})+ \sk{j}^T(x-\xk{j}) \right\} \Big\} \ .
\end{aligned}
\end{equation}
We can control the memory size of the bundle function by restricting the cardinality of sets $J_{k,t+1}^g$ and $J_{k,t+1}^s$. In particular, our theoretical results only require the most recent cuts to be included in constructing $f_{k,t+1}(x)$, i.e.,  $t+1 \in J_{k,t+1}^g$ and $t+1 \in J_{k,t+1}^s$ (see Assumption \ref{ass:bundle_func} and the discussion therein for more details).

\begin{algorithm}[tb]
	\SetAlgoLined
	Initialize with $x_{0,0}\in X$, a step-size sequence $\{\rho_k\}$, and descent parameter $\beta\in(0,1)$.\\
	\For{outer iteration $k\geq 0$}{
	    Generate a set of samples $S_k$ from the distribution of $\xi$. \\
            Generate function and subgradient estimations $\hf_{k}$ and $\hg_{k}$ based on $S_k$ using \eqref{eq:estimation}. \\
	    Initialize with an approximation function $f_{k,0}(x)= \hf_k(\xk{0})+\hg_{k}(\xk{0})^T (x-\xk{0})$.\\
	    \For{inner iteration $t\geq 0$}{
	        $\xk{t+1}=\arg\min_{x\in X} \{\fk{t}(x)+\frac{\rho_k}{2}\|x-\xk{0}\|^2$\} \\
	        \uIf{$\beta(\hf_{k}(\xk{0})-\fk{t}(\xk{t+1}))\le\hf_{k}(\xk{0})-\hf_{k}(\xk{t+1})$}{
	        Set inner loop length $T_k \leftarrow t+1$\\
	        Obtain a serious step and start a new outer loop with $x_{k+1,0} \leftarrow \xk{t+1}$\\
	        \textbf{break}  \\
	    }
	    \Else{Take a null step and update the approximation function $f_{k,t+1}(x)$ following \eqref{eq:limited-memory}}
		}
	}
	\caption{Inexact Regularized L-shaped Algorithm}
	\label{alg.main}
\end{algorithm}

\section{Computational Guarantees}\label{sec.perf_guarantee}
In this section, we present the non-asymptotic iteration and sample complexity results of Algorithm~\ref{alg.main} under various settings. We start by presenting the assumptions that are needed in our theoretical analysis (Section \ref{sec:ass}), and then present the computational guarantees for Algorithm~\ref{alg.main} with a constant step-size policy (Section \ref{sec.const_step_policy}). After that, we present a superior computational guarantee of the algorithm with an evolving step-size policy (which we dub optimal step-size) motivated by Polyak's step-size in Section \ref{sec:ideal_ss}. This step-size policy achieves the optimal complexity, matching with the information-theoretical lower bound of non-smooth optimization. However, it requires knowing the optimal function value $\min_{x\in X}f(x)$ in the step-size choice. To avoid this, we present a practical heuristic that estimates a lower bound of $\min_{x\in X}f(x)$ over time. Furthermore, if the support of $\xi$ is finite, the optimization problem \eqref{prob.first_stage} satisfies a certain sharpness condition. The complexity results from both the constant step-size policy and the optimal step-size policy can be further improved under the sharpness condition (see Section \ref{sec.conv_sharpness}). Finally, we present the sample complexity results of the optimal step-size policy in Section \ref{sec.sample_const}, i.e, how many samples the algorithm needs to access to identify a solution with a certain accuracy. Actually, with a similar analysis as that in Section \ref{sec.sample_const}, we can also obtain a similar sample complexity result for the other settings.

\subsection{Assumptions}\label{sec:ass}
We introduce the assumptions related to our problem setting and algorithm framework that are used in our analysis. 

Assumption \ref{ass:fixed_recourse} states that the constraint set for the master problem is bounded and non-empty.

\begin{ass}\label{ass:fixed_recourse}
The convex feasible set $X$ is non-empty and bounded with diameter $D$, i.e., $X\neq \varnothing$ and $\|x-\bar{x}\| \leq D$ for all $\{x,\bar{x}\}\subset X$.
\end{ass}

The next assumption is on the estimation error of the inexact function estimation $\hat{f_k}$:

\begin{ass}\label{ass:noise_fg}
There is $\epsilon_1$ and $\epsilon_2$, such that for any outer iteration counter $k$, function estimation $\hat{f}_k$ for all $x\in X$ that
\begin{align*}
\hf_k(x) & \in[f(x)-\epsilon_{1},f(x)+\epsilon_{2}]\ .
\end{align*}
\end{ass}
For any fixed $k$, the existence of the bound $\epsilon_1$ and $\epsilon_2$ are guaranteed since the constraint set $X$ is closed and bounded, and functions $f(x)$ and $f_k(x)$ are continuous. Assumption \ref{ass:noise_fg} assumes a uniform bound over outer iteration $k$. We define
\begin{equation}\label{eq:beps}
\beps = (\beta + 1)(\epsilon_1 + \epsilon_2),
\end{equation}
which will appear in the computational guarantees. Without loss of generality, we suppose $\beps \in (0,1)$. Furthermore, by the construction of $\hf_k$ and $\hg_k$, we have $\hg_k(x) \in\partial \hf_k(x)$ for any $x\in X$, thus it holds by the convexity of $\hf_k(x)$ that
\begin{equation}\label{eq:inexact_func_grad_conv}
\hf_k(\tilde{x})\ge \hf_k(x)+\hg_k(x)^T(\tilde{x}-x)\text{ for all }\tilde{x}\in X\ .
\end{equation}
Assumption \ref{ass:bound_hg} assumes that there exists a uniform bound on the subgradient $\hg_k(x)$, which is a standard assumption in non-smooth optimization:

\begin{ass}\label{ass:bound_hg}
There exists $G>0$ such that for any outer iteration count $k$ and a feasible variable $x\in X$ that $\|\hg_k(x)\| \leq G$.
\end{ass}

We also comment that if the random variable $\xi$ is chosen from a distribution with finite support, which is often the case in practical applications, Assumption \ref{ass:noise_fg} and Assumption \ref{ass:bound_hg} automatically satisfy for our construction of $\hf_k(x)$ and $\hg_k(x)$. This is because $\hf_k(x)$ is a piece-wise linear function of a finite number of choices on a bounded and closed set.

Our next assumption imposes requirements on the construction of the bundle function defined in \eqref{eq:limited-memory}. It essentially requires the two most recent cuts to be included in constructing $f_{k,t+1}(x)$.

\begin{ass}\label{ass:bundle_func}
For any $k\geq 0$ and $0 \leq t\leq T_k-2$, the bundle function $\fk{t+1}:\mathbb{R}^n\to\mathbb{R}$ defined in \eqref{eq:limited-memory} is always convex, $G$-Lipschitz continuous, and satisfies that for any $x\in\mathbb{R}^n$
\begin{equation*}
\hf_k(x) \geq \fk{t+1}(x) \geq \max\left\{ \hf_k(\xk{t+1}) + \hg_{k}(\xk{t+1})^T(x- \xk{t+1}), \fk{t}(\xk{t+1})+ \sk{t+1}^T(x-\xk{t+1}) \right\},
\end{equation*}
where $\sk{t+1} = \rho_k(\xk{0} - \xk{t+1})$ and $G > 0$ is defined in Assumption~\ref{ass:bound_hg}.
\end{ass}

We note that the classic full-memory bundle function (see \eqref{eq:generate-full-bundle} with $J_{k,t+1} = \{0,\ldots,t+1\}$ for any $k\geq 0$) satisfies Assumption~\ref{ass:bundle_func}. For the classic full-memory bundle function, the upper bound in Assumption~\ref{ass:bundle_func} follows from the convexity of $\hf_k$. Meanwhile, the lower bound in Assumption~\ref{ass:bundle_func} follows from the definition of $f_{k,t+1}$ and \eqref{eq:func_est_lb_sec2}.

\subsection{Constant Step-size Policy}\label{sec.const_step_policy}
A constant step-size policy refers to the step-size used in Algorithm \ref{alg.main} for different outer iterations $k$ being the same, i.e., $\rho_k = \rho$ for all $k\geq 0$. It is perhaps the most basic and useful step-size selection strategy for solving two-stage SLP. Due to the nature of inexact estimates (see Assumption~\ref{ass:noise_fg}), one could at best expect Algorithm~\ref{alg.main} to generate a sequence of iterates whose objective values converge to a neighborhood of the optimal objective function value. Theorem \ref{thm:const_main} specifies this radius of the neighborhood as well as the total number of outer iterations and inner iterations of Algorithm~\ref{alg.main} to reach this neighborhood in terms of the problem and algorithm parameters.

\begin{thm}\label{thm:const_main}
Suppose Assumptions~\ref{ass:fixed_recourse}--\ref{ass:bundle_func} hold. Consider Algorithm \ref{alg.main} using a constant step-size policy, i.e., $\rho_k=\rho>0$ for all outer iterations $k\geq 0$. Let's define 
\begin{equation}\label{eq:const_neighborhood}
\delta^C := \max\bracket{\frac{4\beps}{\beta}, \sqrt{\frac{4\beps \rho}{\beta} }D} \quad \text{and} \quad K^C := \max\left\{0,\left\lceil\frac{f(x_{0,0}) - f^* - \delta^C}{\bar\epsilon}\right\rceil\right\},
\end{equation}
where $\beps$ is defined in \eqref{eq:beps} and $f^*$ is the optimal function value of $f:X\to\mathbb{R}$, then there exists an outer iteration $k \leq K^C$ such that $f(\xk{0}) - f^* \leq \delta^C$. Furthermore, the total number of inner steps (including null steps) before such a solution is found is at most 
\begin{equation}\label{eq:total-null-const}
\pran{\left\lceil{\frac{8G^2}{\rho(1-\beta)^2\beps} - \frac{16}{(1-\beta)^2}}\right\rceil + 1}K^C. 
\end{equation}
\end{thm}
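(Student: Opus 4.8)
The plan is to prove the two assertions separately: the bound $K^C$ on the outer (serious-step) count comes from showing that every serious step taken before the iterate enters the neighborhood decreases the true objective $f$ by at least $\beps$, while the inner bound comes from a potential argument on the proximal value, which is monotone and confined to an interval of length $O(G^2/\rho)$. Throughout I work with the predicted decrease $v_{k,t}:=\hf_k(\xk{0})-\fk{t}(\xk{t+1})$ and the proximal value $\Phi_{k,t}:=\fk{t}(\xk{t+1})+\frac{\rho}{2}\|\xk{t+1}-\xk{0}\|^2$, which is exactly the optimal value of subproblem \eqref{eq.proximal_bundle}.

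For the outer bound, fix $k$ and let $x^*\in\arg\min_{x\in X}f(x)$. Since the objective in \eqref{eq.proximal_bundle} is $\rho$-strongly convex with minimizer $\xk{t+1}$, evaluating the strong-convexity inequality at $x=\xk{0}$ and using $\fk{t}\le\hf_k$ yields $v_{k,t}\ge\rho\|\xk{t+1}-\xk{0}\|^2$; evaluating it at $x=x^*$, using $\fk{t}(x^*)\le\hf_k(x^*)\le f^*+\epsilon_2$ together with Assumption~\ref{ass:noise_fg}, and bounding the cross term by Cauchy--Schwarz with $\|x^*-\xk{t+1}\|\le D$ yields $v_{k,t}\ge (f(\xk{0})-f^*)-(\epsilon_1+\epsilon_2)-\rho D\|\xk{t+1}-\xk{0}\|$. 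Writing $\Delta_k:=f(\xk{0})-f^*$ and combining these two lower bounds through an elementary case split on the size of $\|\xk{t+1}-\xk{0}\|$ produces a lower bound on $v_{k,t}$ of the form $\min\!\pran{\tfrac{\Delta_k-(\epsilon_1+\epsilon_2)}{2},\ \tfrac{(\Delta_k-(\epsilon_1+\epsilon_2))^2}{c\,\rho D^2}}$ for an absolute constant $c$. The two terms defining $\delta^C$ are exactly the two branches of this case split: the definition of $\delta^C$ as their maximum is arranged so that $\Delta_k>\delta^C$ forces this minimum to exceed $\tfrac{(\beta+2)(\epsilon_1+\epsilon_2)}{\beta}$, which is precisely the threshold for which $\beta v_{k,t}-(\epsilon_1+\epsilon_2)\ge\beps$. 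Applying the serious-step test \eqref{eq.suff_decay}, which gives $\hf_k(\xk{0})-\hf_k(x_{k+1,0})\ge\beta v_{k,T_k-1}$, and converting $\hf_k$ to $f$ via Assumption~\ref{ass:noise_fg} (losing $\epsilon_1+\epsilon_2$) then yields $f(\xk{0})-f(x_{k+1,0})\ge\beps$ whenever $\Delta_k>\delta^C$. Since $f\ge f^*$, at most $K^C$ such strictly decreasing serious steps can occur, giving the first claim.

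For the inner bound I track $\Phi_{k,t}$ across the null steps of a fixed outer iteration. Using $\fk{t}\le\hf_k$ for the upper bound and the closed-form value of $\Phi_{k,0}$ together with Assumption~\ref{ass:bound_hg} for the lower bound, $\Phi_{k,t}$ stays in an interval of length at most $\frac{G^2}{2\rho}$. The engine is the per-null-step increase: the aggregate linearization $\ell_t(x)=\fk{t}(\xk{t+1})+\sk{t+1}^T(x-\xk{t+1})$ shares the proximal minimizer $\xk{t+1}$ with $\fk{t}$, so its proximal value equals $\Phi_{k,t}$; since Assumption~\ref{ass:bundle_func} guarantees that $\fk{t+1}$ dominates both $\ell_t$ and the fresh cut $\hf_k(\xk{t+1})+\hg_k(\xk{t+1})^T(\cdot-\xk{t+1})$, convexly combining these two affine minorants and using the closed-form proximal value of an affine function gives
\begin{equation*}
\Phi_{k,t+1}-\Phi_{k,t}\ \ge\ \max_{\theta\in[0,1]}\pran{\theta\,\eta_{k,t}-\frac{\theta^2}{2\rho}\,\|\hg_k(\xk{t+1})-\sk{t+1}\|^2},
\end{equation*}
where $\eta_{k,t}:=\hf_k(\xk{t+1})-\fk{t}(\xk{t+1})$. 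The null-step test forces $\eta_{k,t}>(1-\beta)v_{k,t}$, and for any outer iteration occurring before the neighborhood is reached the bound from the previous paragraph supplies the uniform estimate $v_{k,t}\ge\beps$; together with $\|\hg_k(\xk{t+1})-\sk{t+1}\|\le 2G$ this makes the right-hand side a fixed positive quantity of order $(1-\beta)^2\beps$. Dividing the interval length $\frac{G^2}{2\rho}$ by this per-step gain bounds the null steps in each such outer iteration, and multiplying by $K^C$ produces \eqref{eq:total-null-const}.

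I expect the inner (null-step) count to be the main obstacle. The outer argument is essentially bookkeeping on top of the two proximal inequalities, whereas the null-step bound needs the correct quantitative form of the proximal-value increase: optimizing the displayed expression over $\theta$ splits into a ``linear'' and a ``quadratic'' regime according to whether $\rho\,\eta_{k,t}$ exceeds $\|\hg_k(\xk{t+1})-\sk{t+1}\|^2$, and carefully tracking the constants and endpoint contributions is what yields the exact expression $\big\lceil\frac{8G^2}{\rho(1-\beta)^2\beps}-\frac{16}{(1-\beta)^2}\big\rceil+1$ rather than a looser polynomial in the same quantities. A secondary subtlety is that the uniform estimate $v_{k,t}\ge\beps$ is available only on outer iterations with $\Delta_k>\delta^C$, so the inner count must be charged only to those at most $K^C$ outer iterations, which is exactly how the product in \eqref{eq:total-null-const} is assembled.
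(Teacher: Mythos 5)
Your architecture matches the paper's more closely than it might appear: your $\Phi_{k,t}$ is exactly $\hf_k(\xk{0})-\tDeltak{t}$ for the inexact proximal gap $\tDeltak{t}$ of \eqref{eq.inner_prox_gap}, and your displayed per-null-step inequality (convex combination of the aggregate cut and the fresh cut, both minorants of $\fk{t+1}$ by Assumption~\ref{ass:bundle_func}, plus the closed-form proximal value of an affine function) is precisely Lemma~\ref{lem:Deltakt_connection}. The genuine gap is in how you aggregate these per-step gains. You propose to divide the interval length $\frac{G^2}{2\rho}$ by a uniform per-step gain obtained by substituting the worst-case estimate $v_{k,t}\geq\beps$ into the optimized right-hand side. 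In the quadratic regime that gain is $\frac{\rho(1-\beta)^2\beps^2}{8G^2}$ --- not a quantity ``of order $(1-\beta)^2\beps$'' as you assert; it carries an extra factor $\rho\beps/G^2$ --- so the quotient is $\frac{4G^4}{\rho^2(1-\beta)^2\beps^2}$ null steps per outer iteration, worse than the claimed $\frac{8G^2}{\rho(1-\beta)^2\beps}$ by the factor $\frac{G^2}{2\rho\beps}$, and \eqref{eq:total-null-const} does not follow. The missing idea is that the gain at step $t$ is proportional to the square of the current gap $\tDeltak{t}$, which is large early in the inner loop: the paper rewrites $\tDeltak{t-1}-\tDeltak{t}\geq\frac{\rho(1-\beta)^2\tDeltak{t-1}^2}{8G^2}$ as the harmonic recursion $\frac{1}{\tDeltak{t}}\geq\frac{1}{\tDeltak{t-1}}+\frac{\rho(1-\beta)^2}{8G^2}$ and telescopes between $\tDeltak{0}\leq\frac{G^2}{2\rho}$ and the terminal lower bound $\tDeltak{T_k-1}\geq\Delta_k-\epsilon_1-\epsilon_2$. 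Every ingredient for this is already in your display; only the summation step must change.

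A secondary, constant-level issue sits in your outer bound. You lower-bound $v_{k,t}$ by $\min\pran{\frac{\Delta}{2},\frac{\Delta^2}{4\rho D^2}}$ with $\Delta=f(\xk{0})-f^*-(\epsilon_1+\epsilon_2)$, the constant $4$ coming from Cauchy--Schwarz. When the second term in the definition of $\delta^C$ is the active one, $f(\xk{0})-f^*>\delta^C$ only yields $\frac{\Delta^2}{4\rho D^2}$ of roughly $\frac{\beps}{\beta}$, which is strictly below your required threshold $\frac{(\beta+2)(\epsilon_1+\epsilon_2)}{\beta}\geq\frac{3\beps}{2\beta}$ for every $\beta\in(0,1)$; your route would therefore need to enlarge $\delta^C$ by a constant factor. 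The paper sidesteps both the factor-of-two loss and the $\epsilon$-degradation inside the square by lower-bounding the exact proximal gap $\Delta_k$ of \eqref{eq.Delta_k} via the convex combination $\lambda x^*+(1-\lambda)\xk{0}$ (Lemma~\ref{lem:proximal_bound}), and only afterwards folding the estimation errors into the serious-step decrease $\beta\Delta_k-\beps$.
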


As a direct consequence of Theorem \ref{thm:const_main}, if we choose constant step sizes $\rho_k = \rho = \frac{4\beps}{\beta D^2}$ for all outer iterations $k\geq 0$,  Algorithm~\ref{alg.main} needs $\mathcal{O}\left(\frac{1}{\beps^3}\right)$ number of inner iterations to find an iterate whose objective gap is no larger than $\mathcal{O}(\beps)$.

The rest of this section proves Theorem~\ref{thm:const_main}. We first introduce the proximal gap at the $k$-th outer iteration as
\begin{equation}\label{eq.Delta_k}
\Delta_k=f(\xk{0})-\pran{f(\bx_{k+1})+\frac{\rho_k}{2}\|\bx_{k+1}-\xk{0}\|^2} \text{ with } \bx_{k+1}=\arg\min_{x\in X}\left\{ f(x)+\frac{\rho_k}{2}\|x-\xk{0}\|^2\right\}, 
\end{equation}
where $\rho_k>0$ represents the step size at outer iteration $k$. This proximal gap term is a valid optimality measure of $\xk{0}$; see \cite{ruszczynski2011nonlinear}. It follows from the convexity of $f(x)$ that (i) $\Delta_k \geq 0$ and (ii) $\Delta_k = 0$ if and only if $\xk{0} = \arg\min_{x\in X} f(x)$. 

Next, we present two lemmas that we will use in the proof of Theorem \ref{thm:const_main}. The proofs of the two lemmas are presented in Appendix \ref{app:proof_const}. The first lemma shows that $\Delta_k$ always performs as an upper bound onquantities related to the optimal function value gap.

\begin{lem}\label{lem:proximal_bound}
Suppose Assumption~\ref{ass:fixed_recourse} holds, then for any outer iteration $k\geq 0$, with step size $\rho_k>0$ from Algorithm~\ref{alg.main} and the diameter $D$ defined in Assumption~\ref{ass:fixed_recourse}, we have that 
\begin{equation}\label{eq:proximal_bound}
    \Delta_k \ge \left\{ \begin{array}{cl}
        (f(\xk{0})-f^*)^2/(2\rho_kD^2) & \text{ if } f(\xk{0})-f^* \le \rho_k D^2, \\
        (f(\xk{0})-f^*)/2 & \text{ otherwise, } 
    \end{array} \right.
\end{equation}
where $f^*$ represents the optimal function value of $f:X\to\RR$ defined in \eqref{prob.first_stage}.
\end{lem}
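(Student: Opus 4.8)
The plan is to bound the Moreau-envelope-type gap $\Delta_k$ from below by restricting the proximal minimization in \eqref{eq.Delta_k} to a single well-chosen feasible curve, rather than trying to characterize the true proximal point $\bx_{k+1}$. Let $x^*$ be any minimizer of $f$ over $X$ (which exists since $f$ is continuous and $X$ is compact under Assumption~\ref{ass:fixed_recourse}), and consider the segment $x_\lambda := \xk{0} + \lambda(x^* - \xk{0})$ for $\lambda \in [0,1]$. Convexity of $X$ guarantees $x_\lambda \in X$, so $x_\lambda$ is a feasible competitor in the minimization, giving
\begin{equation*}
f(\bx_{k+1}) + \frac{\rho_k}{2}\|\bx_{k+1}-\xk{0}\|^2 \le f(x_\lambda) + \frac{\rho_k}{2}\|x_\lambda - \xk{0}\|^2 .
\end{equation*}
I would then control the two terms on the right separately: convexity of $f$ yields $f(x_\lambda) \le f(\xk{0}) - \lambda\,(f(\xk{0}) - f^*)$, and $\|x_\lambda - \xk{0}\| = \lambda\,\|x^* - \xk{0}\| \le \lambda D$ by Assumption~\ref{ass:fixed_recourse} yields $\frac{\rho_k}{2}\|x_\lambda - \xk{0}\|^2 \le \frac{\rho_k D^2}{2}\lambda^2$.

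Substituting these two estimates into the definition \eqref{eq.Delta_k} produces, writing $g := f(\xk{0}) - f^* \ge 0$, the parametric lower bound
\begin{equation*}
\Delta_k \ge \lambda g - \frac{\rho_k D^2}{2}\lambda^2 =: h(\lambda), \qquad \text{valid for every } \lambda \in [0,1].
\end{equation*}
The remaining step is purely one-dimensional: maximize the concave quadratic $h$ over $[0,1]$. Its unconstrained maximizer is $\lambda^\star = g/(\rho_k D^2)$, and whether this point is feasible is exactly what separates the two branches of the claimed bound.

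The case split then matches the statement directly. If $g \le \rho_k D^2$, then $\lambda^\star \in [0,1]$, and evaluating $h(\lambda^\star)$ gives $g^2/(2\rho_k D^2)$, the first branch. If instead $g > \rho_k D^2$, then $\lambda^\star > 1$, so $h$ is increasing on $[0,1]$ and is maximized at $\lambda = 1$; since $\rho_k D^2 < g$ implies $\frac{\rho_k D^2}{2} < \frac{g}{2}$, we obtain $h(1) = g - \frac{\rho_k D^2}{2} > g/2$, the second branch. I do not expect any genuine obstacle here: the argument uses only convexity of $f$ and the diameter bound, and never invokes the inexact oracle or the bundle construction. The only points requiring mild care are confirming the existence of $x^*$ and aligning the case boundary cleanly at $\lambda^\star = 1$ (equivalently $g = \rho_k D^2$), so that the two regimes agree at the threshold.
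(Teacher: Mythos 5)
Your proposal is correct and follows essentially the same argument as the paper: both restrict the proximal minimization to the segment between $\xk{0}$ and an optimizer $x^*$, apply convexity of $f$ and the diameter bound, and then choose $\lambda = (f(\xk{0})-f^*)/(\rho_k D^2)$ in the first case and $\lambda = 1$ in the second. Your presentation merely unifies the two cases as maximizing a concave quadratic over $[0,1]$, which is a cosmetic rather than substantive difference.
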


Since the proximal gap $\Delta_k$ cannot be computed by Algorithm~\ref{alg.main}, which relies on the true function $f(x)$, we introduce another computable inexact proximal gap term. For any $k\geq 0$ and $t\in \{0,\ldots,T_k-1\}$, we define the inexact proximal gap $\tDeltak{t}$ as
\begin{equation}\label{eq.inner_prox_gap}
\tDeltak{t} = \hf_k(\xk{0})-\pran{\fk{t}(\xk{t+1})+\frac{\rho_{k}}{2}\|\xk{t+1}-\xk{0}\|^2},
\end{equation}
then it follows from Algorithm~\ref{alg.main} that every null step satisfies $\beta(\hf_{k}(\xk{0})-\fk{t}(\xk{t+1})) > \hf_{k}(\xk{0})-\hf_{k}(\xk{t+1})$, thus for any $k \geq 0$ and $0 \leq t \leq T_k - 2$
\begin{align}\label{eq:null-step}
    \hf_{k}(\xk{t+1})-\fk{t}(\xk{t+1}) > (1-\beta)\pran{\hf_{k}(\xk{0})-\fk{t}(\xk{t+1})}\ge (1-\beta)\tDelta_{k,t}.
\end{align}
Lemma~\ref{lem:Deltakt_connection} builds up connections between any two consecutive inexact proximal gap terms, e.g., $\tDeltak{t-1}$ and $\tDeltak{t}$.
\begin{lem}\label{lem:Deltakt_connection}
Under the same assumption as Theorem \ref{thm:const_main}, we have $\|s_{k,t}\| := \|\rho_k(\xk{0} - \xk{t})\| \leq G$ for any $k\geq 0$ and $1 \leq t \leq T_k$, where $G$ is defined in Assumption~\ref{ass:bound_hg}. Moreover, it holds for any $k\geq 0$ and $1 \leq t\le T_k-1$ that
\begin{equation}\label{eq:combined_both}
\tDeltak{t} \leq \tDeltak{t-1} -\theta_{k,t}(\hf_k(\xk{t})-\fk{t-1}(\xk{t}))+\frac{\theta_{k,t}^2}{2\rho_k}\|\sk{t}-\hg_{k}(\xk{t})\|^2,
\end{equation}
where $\theta_{k,t}=\min\bracket{1,\frac{\rho_k(\hf_k(\xk{t})-\fk{t-1}(\xk{t}))}{\|\hg_{k}(\xk{t})-\sk{t}\|^2}}$.
\end{lem}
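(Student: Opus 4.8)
The plan is to prove the two claims separately: first the uniform bound $\|\sk{t}\|\le G$, and then the one-step recursion, which is the substantive part.

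For the bound on $\|\sk{t}\| = \rho_k\|\xk{0}-\xk{t}\|$, I would exploit the $\rho_k$-strong convexity of the proximal objective $x\mapsto \fk{t-1}(x)+\frac{\rho_k}{2}\|x-\xk{0}\|^2$, whose constrained minimizer over $X$ is exactly $\xk{t}$. Applying the strong-convexity-plus-optimality inequality at the feasible point $\xk{0}\in X$ yields $\fk{t-1}(\xk{0})-\fk{t-1}(\xk{t}) \ge \rho_k\|\xk{t}-\xk{0}\|^2$. Since $\fk{t-1}$ is $G$-Lipschitz by Assumption~\ref{ass:bundle_func} (and $\fk{0}$ is $G$-Lipschitz because $\|\hg_k(\xk{0})\|\le G$), the left-hand side is at most $G\|\xk{0}-\xk{t}\|$; dividing through by $\|\xk{0}-\xk{t}\|$ gives $\rho_k\|\xk{0}-\xk{t}\|\le G$, i.e. $\|\sk{t}\|\le G$.

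For the recursion, I would first rewrite the two inexact proximal gaps so that their difference becomes $\tDeltak{t}-\tDeltak{t-1}=\psi_{t-1}(\xk{t})-\min_{x\in X}\psi_t(x)$, where $\psi_t(x):=\fk{t}(x)+\frac{\rho_k}{2}\|x-\xk{0}\|^2$ and I use that $\xk{t}$ minimizes $\psi_{t-1}$ over $X$. The goal is then to lower bound $\min_{x\in X}\psi_t(x)$. By Assumption~\ref{ass:bundle_func} (with the index shifted by one), $\fk{t}(x)$ dominates both the fresh subgradient cut $\hf_k(\xk{t})+\hg_k(\xk{t})^T(x-\xk{t})$ and the aggregate cut $\fk{t-1}(\xk{t})+\sk{t}^T(x-\xk{t})$ for every $x$, hence it dominates any convex combination of the two with weight $\theta\in[0,1]$. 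Substituting this combination into $\psi_t$ and minimizing over $X$, and then over all of $\mathbb{R}^n$ (which only decreases the value), produces a lower bound valid for every $\theta$.

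The crux of the argument, and the step I expect to require the most care, is the recentering identity driven by $\sk{t}=\rho_k(\xk{0}-\xk{t})$: completing the square shows that $\fk{t-1}(\xk{t})+\sk{t}^T(x-\xk{t})+\frac{\rho_k}{2}\|x-\xk{0}\|^2 = \psi_{t-1}(\xk{t})+\frac{\rho_k}{2}\|x-\xk{t}\|^2$, with $\psi_{t-1}(\xk{t})=\fk{t-1}(\xk{t})+\frac{\rho_k}{2}\|\xk{t}-\xk{0}\|^2$, and an analogous recentering applies to the subgradient-cut term. After this, the $\theta$-combination collapses to $\psi_{t-1}(\xk{t})+\theta(\hf_k(\xk{t})-\fk{t-1}(\xk{t}))+\frac{\rho_k}{2}\|x-\xk{t}\|^2+\theta(\hg_k(\xk{t})-\sk{t})^T(x-\xk{t})$, whose unconstrained minimum over $x$ is obtained by a one-line quadratic computation and equals $\psi_{t-1}(\xk{t})+\theta(\hf_k(\xk{t})-\fk{t-1}(\xk{t}))-\frac{\theta^2}{2\rho_k}\|\hg_k(\xk{t})-\sk{t}\|^2$. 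Combining with the difference expression yields $\tDeltak{t}\le \tDeltak{t-1}-\theta(\hf_k(\xk{t})-\fk{t-1}(\xk{t}))+\frac{\theta^2}{2\rho_k}\|\sk{t}-\hg_k(\xk{t})\|^2$ for all $\theta\in[0,1]$. Finally, minimizing this scalar quadratic in $\theta$ over $[0,1]$ yields the stated optimal weight $\theta_{k,t}=\min\{1,\rho_k(\hf_k(\xk{t})-\fk{t-1}(\xk{t}))/\|\hg_k(\xk{t})-\sk{t}\|^2\}$, which completes the proof. The only delicate bookkeeping is ensuring the index shift in Assumption~\ref{ass:bundle_func} is applied correctly and that all the models $\fk{t-1}$ invoked are $G$-Lipschitz; the remaining manipulations are routine algebra.
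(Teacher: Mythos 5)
Your proposal is correct and follows essentially the same route as the paper: the bound $\|s_{k,t}\|\le G$ comes from the $\rho_k$-strong convexity of the proximal subproblem evaluated at the feasible point $x_{k,0}$ plus $G$-Lipschitzness of $f_{k,t-1}$, and the recursion comes from lower-bounding $f_{k,t}$ by a $\theta$-convex combination of the fresh subgradient cut and the aggregate cut, recentering the quadratic around $x_{k,t}$ via $s_{k,t}=\rho_k(x_{k,0}-x_{k,t})$, and choosing the optimal $\theta\in[0,1]$, which is exactly $\theta_{k,t}$. The only (harmless) organizational difference is that you keep $\theta$ free, minimize the combination over all of $\mathbb{R}^n$ in closed form, and optimize $\theta$ at the end, whereas the paper fixes $\theta=\theta_{k,t}$ up front, writes down the candidate proximal minimizer $z_{k,t+1}$ of the max of the two cuts, and verifies its optimality through a two-case subdifferential argument that your version sidesteps.
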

 Now we are ready to prove Theorem \ref{thm:const_main}. The proof of Theorem~\ref{thm:const_main} can be divided into three steps. First, based on the result of Lemma~\ref{lem:Deltakt_connection}, we present an upper bound of the length on the inner iterations for an outer iteration. Next, we present a sufficient decrease property on the objective function values between two consecutive {\sl serious} iterates. Last, we combine the first two components together, which leads to the complexity theory of the algorithm.

\begin{proof}[Proof of Theorem~\ref{thm:const_main}]
This proof includes three parts as follows.

\textbf{Part 1 (Length of the inner loop)} \quad Without loss of generality, let's suppose current outer iteration $k\geq 0$ satisfies $f(\xk{0}) - f^* > \delta^C$. Then from Lemma~\ref{lem:proximal_bound} and the definitions of $\delta^C$ and $\beps$, we know $\Delta_k > \epsilon_1 + \epsilon_2$. Based on the result of Lemma~\ref{lem:Deltakt_connection}, we further consider two cases.
\begin{itemize}
\item If $\frac{\rho_k(\hf_k(\xk{t})-\fk{t-1}(\xk{t}))}{\|\hg_{k}(\xk{t})-\sk{t}\|^2} \geq 1$, namely $\theta_{k,t}=1$, then \eqref{eq:combined_both} implies that
\begin{equation}\label{eq:case-1}
\begin{aligned}
    \tDeltak{t} &\le \tDeltak{t-1} -(\hf_k(\xk{t})-\fk{t-1}(\xk{t})) + \frac{1}{2\rho_k}\|\sk{t}-\hg_{k}(\xk{t})\|^2 \\
    &\le \tDeltak{t-1} - \frac{1}{2}(\hf_k(\xk{t})-\fk{t-1}(\xk{t})) \le \tDeltak{t-1} -\frac{1}{2}(1-\beta)\tDeltak{t-1},
\end{aligned}
\end{equation}
where the last inequality follows \eqref{eq:null-step} since we take a null step at inner iteration $t-1$. 
\item If $\frac{\rho_k(\hf_k(\xk{t})-\fk{t-1}(\xk{t}))}{\|\hg_{k}(\xk{t})-\sk{t}\|^2} < 1$, then it follows Lemma~\ref{lem:Deltakt_connection} that $\theta_{k,t}=\frac{\rho_k(\hf_k(\xk{t})-\fk{t-1}(\xk{t}))}{\|\hg_{k}(\xk{t})-\sk{t}\|^2}$. Thus, by Assumption~\ref{ass:bound_hg}, Lemma~\ref{lem:Deltakt_connection}, \eqref{eq:null-step}, and \eqref{eq:combined_both}, we have
\begin{equation}\label{eq:case-2}
\begin{aligned}
\tDeltak{t} & \leq \tDeltak{t-1} -\theta_{k,t}(\hf_k(\xk{t})-\fk{t-1}(\xk{t}))+\frac{\theta_{k,t}^2}{2\rho_k}\|\sk{t}-\hg_{k}(\xk{t})\|^2 \\ &= \tDeltak{t-1} - \frac{\rho_k(\hf_k(\xk{t})-\fk{t-1}(\xk{t}))^2}{2\|\hg_{k}(\xk{t})-\sk{t}\|^2} \leq \tDeltak{t-1} - \frac{\rho_k(1-\beta)^2\tDeltak{t-1}^2}{2\|\hg_{k}(\xk{t})-\sk{t}\|^2} \\
& \leq \tDeltak{t-1} - \frac{\rho_k(1-\beta)^2\tDeltak{t-1}^2}{4(\|\hg_{k}(\xk{t})\|^2 + \|\sk{t}\|^2)} \leq \tDeltak{t-1} - \frac{\rho_k(1-\beta)^2\tDeltak{t-1}^2}{8G^2}.
\end{aligned}
\end{equation}
where the last inequality follows the fact of $\max\{\|\hg_{k}(\xk{t})\|,\|s_{k,t}\|\} \leq G$ (see Assumption~\ref{ass:bound_hg} and Lemma~\ref{lem:Deltakt_connection}). 
\end{itemize}

By \eqref{eq:case-1} and~\eqref{eq:case-2}, $\tDeltak{t}$ monotonically decreases when $t$ increases.
From Assumption~\ref{ass:bound_hg}, \eqref{eq:initialization}, and \eqref{eq.inner_prox_gap}, we further have that 
\begin{equation}\label{eq:bound-delta0}
\begin{aligned}
\tDeltak{t-1} \leq \tDelta_{k,0} &=\hf_k(\xk{0})-\pran{f_{k,0}(\xk{1})+\frac{\rho_{k}}{2}\|\xk{1}-\xk{0}\|^2} \\
&= \hg_k(\xk{0})^T(\xk{0} - \xk{1}) - \frac{\rho_{k}}{2}\|\xk{1}-\xk{0}\|^2 \\
&\leq G\|\xk{1}-\xk{0}\| - \frac{\rho_{k}}{2}\|\xk{1}-\xk{0}\|^2 \leq \frac{G^2}{2\rho_k},
\end{aligned}
\end{equation}
while the last inequality is from the optimal value of a strongly concave quadratic optimization problem $\max_{x\in\mathbb{R}}\{Gx - \frac{\rho_k}{2}x^2\}$.
Combining \eqref{eq:case-1},~\eqref{eq:case-2} and~\eqref{eq:bound-delta0}, we obtain
\begin{align}\label{eq:decay}
    \tDeltak{t} \le \tDeltak{t-1}-\min\bracket{\frac{(1-\beta)}{2}\tDeltak{t-1}, \frac{\rho_k(1-\beta)^2\tDeltak{t-1}^2}{8G^2}} = \tDeltak{t-1} - \frac{\rho_k(1-\beta)^2\tDeltak{t-1}^2}{8G^2}.
\end{align}
Once a null step is taken at iteration $(t-1)$, from \eqref{eq:decay} and the fact of $\tDeltak{t}\leq \tDeltak{t-1}$, we have
\begin{align*}
\tDeltak{t}\leq \tDeltak{t-1} - \frac{\rho_k(1-\beta)^2\tDeltak{t-1}^2}{8G^2} \leq \tDeltak{t-1} - \frac{\rho_k(1-\beta)^2\tDeltak{t-1}\tDeltak{t}}{ 8G^2},
\end{align*}
which implies that 
\begin{align}\label{eq:decay_2}
\frac{1}{\tDeltak{t}} \geq \frac{1}{\tDeltak{t-1}} + \frac{\rho_k(1-\beta)^2}{8G^2}.
\end{align}
From \eqref{eq:initialization} and Assumptions~\ref{ass:noise_fg} and~\ref{ass:bundle_func}, we have that for all $x\in X$, $k\geq 0$ and $0\leq t\leq T_k - 1$
\begin{equation}\label{eq.hf_sandwich}
\fk{t}(x) \le \hf_k(x) \leq f(x) + \epsilon_2.
\end{equation} 
Therefore, by Assumption~\ref{ass:noise_fg}, \eqref{eq.Delta_k}, \eqref{eq.inner_prox_gap}, \eqref{eq.hf_sandwich} and the definition of $\xk{T_k}$, we have that
\begin{equation}\label{eq:error}
\begin{aligned}
    \tDeltak{T_k-1} &= \hf_k(\xk{0})-\pran{\fk{T_k-1}(\xk{T_k})+\frac{\rho_{k}}{2}\|\xk{T_k}-\xk{0}\|^2} \\
    &= \hf_k(\xk{0})-\min_{x\in X}\left\{\fk{T_k-1}(x) + \frac{\rho_{k}}{2}\|x-\xk{0}\|^2\right\} \\
    &\geq (f(\xk{0}) - \epsilon_1) - \left(\fk{T_k-1}(\bar{x}_{k+1}) + \frac{\rho_{k}}{2}\|\bar{x}_{k+1} - \xk{0}\|^2\right) \\
    &\geq  (f(\xk{0}) - \epsilon_1) - \left(f(\bar{x}_{k+1}) + \epsilon_2 + \frac{\rho_{k}}{2}\|\bar{x}_{k+1} - \xk{0}\|^2\right) \\
    &= \Delta_{k}-\epsilon_1-\epsilon_2.
\end{aligned}
\end{equation}
Recall that in every outer iteration $k\geq 0$, we take null steps for inner iterations $t\in\{0,...,T_k-2\}$. Combining \eqref{eq:bound-delta0},~\eqref{eq:decay_2}, and~\eqref{eq:error}, we know the number of null steps could be upper bounded by 
\begin{equation}\label{eq.Tk_bound}
\begin{aligned}
T_k - 1 \leq \left\lceil{\frac{\frac{1}{\Delta_k - \epsilon_1 - \epsilon_2} - \frac{2\rho_k}{G^2}}{\frac{\rho_k(1-\beta)^2}{8G^2}}}\right\rceil \leq \left\lceil{\frac{8G^2}{\rho_k(1-\beta)^2(\Delta_k - \epsilon_1 - \epsilon_2)} - \frac{16}{(1-\beta)^2}}\right\rceil,
\end{aligned}
\end{equation}
which provides an upper bound for $T_k$.

\textbf{Part 2 (Objective decrease on serious iterates)} \quad Now we are going to show that there is an objective function decrease on two consecutive serious iterates. It follows Assumption~\ref{ass:noise_fg} and the definition of the serious step that
\begin{equation}\label{eq.obj_decay}
\begin{aligned}
    f(x_{k+1,0})&\le \hf_{k}(x_{k+1,0})+\epsilon_1 = \hf_{k}(x_{k,T_k})+\epsilon_1 \\
    &\le \hf_k(\xk{0}) - \beta(\hf_k(\xk{0})-\fk{T_k-1}(\xk{T_k}))+\epsilon_1\\
    &\le \hf_k(\xk{0}) - \beta\pran{\hf_k(\xk{0})-\pran{\fk{T_k-1}(\xk{T_k})+\frac{\rho_k}{2}\|\xk{T_k}-\xk{0}\|^2}}+\epsilon_1\\
    &\le \hf_k(\xk{0}) - \beta\pran{\hf_k(\xk{0})-\pran{\fk{T_k - 1}(\bx_{k+1})+\frac{\rho_k}{2}\|\bx_{k+1}-\xk{0}\|^2}}+\epsilon_1\\
    &\le \hf_k(\xk{0}) - \beta\pran{(f(\xk{0}) - \epsilon_1)-\pran{f(\bx_{k+1})+\frac{\rho_k}{2}\|\bx_{k+1}-\xk{0}\|^2+\epsilon_2}}+\epsilon_1\\
    & = \hf_k(\xk{0}) - \beta\Delta_k+\beta(\epsilon_1+\epsilon_2)+\epsilon_1 \\
    & \le f(\xk{0}) - \beta\Delta_k +(\beta+1)(\epsilon_1+\epsilon_2) \\
    & = f(\xk{0})-\left(\beta \Delta_k - \beps \right),
\end{aligned}
\end{equation}
where the fourth inequality follows the update rule of $\xk{T_k}$, the fifth inequality is from Assumption~\ref{ass:noise_fg} and \eqref{eq.hf_sandwich}, and the second equality comes from \eqref{eq.Delta_k}.

\textbf{Part 3 (Total outer and inner iterations)} \quad We prove this part by contradiction. Suppose for all outer iterations $0\leq k\le K^C$ it holds that $f(\xk{0})-f^* > \delta^C$. Then we have from \eqref{eq:const_neighborhood} and \eqref{eq:proximal_bound} that
\begin{equation}\label{eq:const-Deltak}
\begin{aligned}
    \Delta_k &\ge \min\bracket{\frac{(f(\xk{0})-f^*)^2}{2\rho D^2}, \frac{f(\xk{0})-f^*}{2}} > \min\bracket{\frac{(\delta^C)^2}{2\rho D^2}, \frac{\delta^C}{2}} \ge \frac{2\beps}{\beta}.
\end{aligned}
\end{equation}
Thus, it follows from \eqref{eq.obj_decay} that
$$
f(x_{k+1,0})\le f(\xk{0})- \left(\beta\Delta_k - \beps \right) < f(\xk{0})- \beps .
$$
As a result, we have $f(x_{K^C,0})-f^*< f(x_{0,0})-f^*-K^C\beps \leq \delta^C$, which results in a contradiction. Therefore, there must exist  some $0\leq k\leq K^C$ such that $f(\xk{0})-f^* \leq \delta^C$.

Furthermore, notice that it follows from \eqref{eq:const-Deltak} and the definition of $\bar{\epsilon}$ that $\Delta_k > \frac{2\beps}{\beta} \geq \bar\epsilon + \epsilon_1+\epsilon_2$. By \eqref{eq.Tk_bound}, we obtain the total number of null steps is at most 
\begin{equation*}
\pran{\left\lceil{\frac{8G^2}{\rho(1-\beta)^2\beps} - \frac{16}{(1-\beta)^2}}\right\rceil + 1}K^C, 
\end{equation*}
which completes the proof.
\end{proof}
Let's revisit the three parts in the proof of Theorem~\ref{thm:const_main}. In the first part, by exploring a recursive relationship of consecutive inexact proximal gap terms, we have shown that the number of null steps in a single outer iteration is upper bounded. Then in the second part, we have proved an objective function decrease property on consecutive serious iterates. We note that the proof of the first two parts can be applied to any general step-size policies, not only the constant step-size policy. Finally, based on the results of the first two parts, we have provided upper bounds for the total number of outer and inner iterations under a constant step-size policy, which concludes the purpose of this subsection.

We comment here that our proof technique in Theorem \ref{thm:const_main} follows from~\cite{DiazGrim21}. Theorem \ref{alg.main} can be viewed as an extension of Theorem 2.1 for exact proximal bundle methods~\cite{DiazGrim21} into the inexact setting. Unlike the exact algorithm which can identify a solution with an arbitrarily small optimality gap, the inexact algorithm can at best find a solution that is at the order of the inexactness level $\beps$. Surprisingly, the inexact function and gradient estimation do not significantly slow down the convergence behaviors of the algorithm. The fundamental difficulty of the analysis in the inexact setting comes from determining how to navigate the error terms in the inexact approximation so that the errors do not become exaggerated over time.

\subsection{Optimal Step-Size Policy}\label{sec:ideal_ss}

Theorem \ref{thm:const_main} shows that Algorithm \ref{alg.main} with the constant step-size needs $\mathcal{O}(\frac{1}{\beps^3})$ number of inner iterations to find a solution with $\mathcal{O}\pran{{\beps}}$ optimality gap. In this section, we show that with a carefully chosen step-size motivated by Polyak's step-size in non-smooth optimization, Algorithm \ref{alg.main} can find a solution with $\mathcal{O}\pran{{\beps}}$ optimality gap in $\mathcal{O}(\frac{1}{\beps^2})$ number of inner iterations.  Furthermore, the complexity of the algorithm with such a step-size matches the complexity lower bound of non-smooth optimization; thus, we call it the optimal step-size policy. Similar to Polyak's step size policy (\cite{Poli87}), the optimal step-size requires the knowledge of $f^*$.
In practice, the value of $f^*$ is usually unknown before running the algorithm, which makes the algorithm impractical. However, one may estimate an approximation of $f^*$ while running the algorithm, and we discuss a practical step-size rule at the end of this section. 

More specifically, with the optimal step-size policy for Algorithm \ref{alg.main}, the step-size at the $k$-th outer iteration is chosen as
\begin{equation}\label{eq.ideal_step_rule}
\rho_k=\frac{\hf_k(\xk{0})-\epsilon_2-f^*}{D^2} \ ,
\end{equation}
where $f^*$ is the optimal objective value, and $D$ is the diameter of the constraint set (see Assumption~\ref{ass:fixed_recourse}).
We note that if $\rho_k \leq 0$, we directly terminate the algorithm since Assumption~\ref{ass:noise_fg} directly implies that $\xk{0}$ is already near-optimal, i.e., $f(\xk{0}) \leq f^* + \epsilon_1 + \epsilon_2$. Therefore, for our optimal step-size algorithm, we only consider the case that $\rho_k > 0$ for all $k\geq 0$. Moreover, by Assumption~\ref{ass:noise_fg} and the condition of $\rho_k > 0$, we know 
\begin{equation*}
f(\xk{0})-f^* \ge \hf_k(\xk{0})-\epsilon_2-f^* = \rho_k D^2,
\end{equation*}
and it follows from Lemma \ref{lem:proximal_bound} that 
\begin{equation}\label{eq.Deltak_lb_gap_ideal}
\Delta_k\ge \frac{1}{2}(f(\xk{0})-f^*).
\end{equation}
The next theorem presents the complexity theory of Algorithm~\ref{alg.main} with the optimal step-size policy \eqref{eq.ideal_step_rule}.

\begin{thm}\label{thm:ideal_main}
We consider Algorithm~\ref{alg.main} using the optimal step-size policy $\rho_k=\frac{\hf_k(\xk{0})-\epsilon_2-f^*}{D^2} > 0$ for all $k\geq 0$. Suppose Assumptions~\ref{ass:fixed_recourse}--\ref{ass:bundle_func} hold. Let's define  
\begin{equation}
\delta^I:=\frac{4\beps}{\beta} \quad \text{and} \quad K^I:= \max\left\{0, \left\lceil \frac{\log\pran{\frac{f(x_{0,0})-f^*}{\delta^I}}}{-\log\pran{1-\frac{\beta}{4}}} \right\rceil\right\},
\end{equation}
where $\beps$ is defined in \eqref{eq:beps} and $f^*$ is the optimal function value of $f:X\to\mathbb{R}$; then there exists an outer iteration $k\le K^I$ such that $f(x_{k,0})-f^*\le \delta^I$. Furthermore, the total number of inner steps (including null steps) before such a solution is found is at most
\begin{equation}\label{eq:total-null-ideal}
\frac{32G^2D^2}{3(1-\beta)^2(2-\frac{\beta}{4})\beps^2} = \mathcal{O}\pran{\frac{1}{\beps^2}}. 
\end{equation}
\end{thm}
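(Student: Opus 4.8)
The plan is to reuse the two step-size-agnostic ingredients already established in the proof of Theorem~\ref{thm:const_main} --- the per-outer-iteration objective decrease \eqref{eq.obj_decay}, $f(x_{k+1,0}) \le f(\xk{0}) - (\beta\Delta_k - \beps)$, and the inner-loop length bound \eqref{eq.Tk_bound}, $T_k \le \lceil \frac{8G^2}{\rho_k(1-\beta)^2(\Delta_k-\epsilon_1-\epsilon_2)} - \frac{16}{(1-\beta)^2}\rceil + 1$ --- and to feed into them the sharper lower bound \eqref{eq.Deltak_lb_gap_ideal}, $\Delta_k \ge \tfrac12(f(\xk{0})-f^*)$, which holds precisely because the optimal step-size \eqref{eq.ideal_step_rule} guarantees $\rho_k D^2 = \hf_k(\xk{0})-\epsilon_2-f^* \le f(\xk{0})-f^*$ and hence places us in the favorable branch of Lemma~\ref{lem:proximal_bound}. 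Throughout I write $g_k := f(\xk{0})-f^*$.

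For the outer-iteration count I would first establish a geometric contraction of the gap while it remains above the target radius. Combining \eqref{eq.obj_decay} with $\Delta_k \ge \tfrac12 g_k$ gives $g_{k+1} \le (1-\tfrac{\beta}{2})g_k + \beps$; if moreover $g_k > \delta^I = \tfrac{4\beps}{\beta}$, then $\beps < \tfrac{\beta}{4}g_k$, and substituting yields $g_{k+1} < (1-\tfrac{\beta}{4})g_k$. Writing $q := 1-\tfrac{\beta}{4} \in (0,1)$, I then argue by contradiction exactly as in Part~3 of Theorem~\ref{thm:const_main}: if $g_k > \delta^I$ for every $0 \le k \le K^I$, the contraction iterates to $g_{K^I} < q^{K^I} g_0 \le \delta^I$, where the last inequality is precisely the defining property of $K^I = \lceil \log(g_0/\delta^I)/(-\log q)\rceil$, contradicting $g_{K^I} > \delta^I$. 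Hence some $k^* \le K^I$ satisfies $g_{k^*} \le \delta^I$, the case $g_0 \le \delta^I$ being covered by the $\max\{0,\cdot\}$.

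For the inner-iteration total I would bound $\sum_{k=0}^{k^*-1} T_k$, noting every such $k$ has $g_k > \delta^I$. First I simplify \eqref{eq.Tk_bound}: since $\beta\in(0,1)$ forces $\tfrac{16}{(1-\beta)^2} \ge 16 > 2$, the ceiling-plus-one collapses to $T_k \le \frac{8G^2}{\rho_k(1-\beta)^2(\Delta_k-\epsilon_1-\epsilon_2)}$. Next I lower-bound the denominator using the step-size: $\rho_k \ge (g_k-\epsilon_1-\epsilon_2)/D^2$ and $\Delta_k - \epsilon_1-\epsilon_2 \ge \tfrac12 g_k - \epsilon_1-\epsilon_2$; and since $g_k > \delta^I$ together with $\epsilon_1+\epsilon_2 = \beps/(\beta+1)$ gives $\epsilon_1+\epsilon_2 < \tfrac14 g_k$, I get $g_k-\epsilon_1-\epsilon_2 > \tfrac34 g_k$ and $\tfrac12 g_k - \epsilon_1-\epsilon_2 > \tfrac14 g_k$, so $\rho_k(\Delta_k-\epsilon_1-\epsilon_2) > \tfrac{3g_k^2}{16D^2}$ and $T_k < \frac{128 G^2 D^2}{3(1-\beta)^2 g_k^2}$. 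Finally I sum using the geometric decay: from $g_{k+1} < q g_k$ the gaps satisfy $g_k > q^{-(k^*-1-k)}\delta^I$, hence $\sum_{k=0}^{k^*-1} g_k^{-2} < (\delta^I)^{-2}\sum_{m\ge 0} q^{2m} = (\delta^I)^{-2}/(1-q^2)$, and with $1-q^2 = \tfrac{\beta}{4}(2-\tfrac{\beta}{4})$ and $(\delta^I)^2 = 16\beps^2/\beta^2$ this yields $\sum T_k < \frac{128G^2D^2}{3(1-\beta)^2}\cdot\frac{\beta}{4\beps^2(2-\beta/4)} = \frac{32\beta G^2D^2}{3(1-\beta)^2(2-\beta/4)\beps^2}$; bounding $\beta \le 1$ gives the claimed $\frac{32G^2D^2}{3(1-\beta)^2(2-\beta/4)\beps^2} = \mathcal{O}(1/\beps^2)$.

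The main obstacle is the inner-iteration summation. Unlike the constant-step analysis where $\rho$ is fixed and the $1/\beps^3$ rate arises, here the gain comes from the step-size being of order $g_k$, which upgrades the per-iteration bound to $T_k = \mathcal{O}(g_k^{-2})$ rather than $\mathcal{O}(g_k^{-1}\beps^{-1})$, and from the geometric decay of $g_k$ that renders $\sum g_k^{-2}$ a convergent geometric series dominated by its last term $\approx (\delta^I)^{-2} \approx \beps^{-2}$. The delicate points are keeping the additive errors $\epsilon_1+\epsilon_2$ controlled (via $g_k > \delta^I$) so that both denominators stay a fixed fraction of $g_k^2$, and verifying the constant $1-q^2 = \tfrac{\beta}{4}(2-\tfrac{\beta}{4})$ that produces the exact $(2-\tfrac{\beta}{4})$ factor appearing in the stated bound.
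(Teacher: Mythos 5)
Your proposal is correct and follows essentially the same route as the paper's proof: reuse the step-size-agnostic bounds \eqref{eq.Tk_bound} and \eqref{eq.obj_decay}, invoke $\Delta_k \ge \tfrac12(f(\xk{0})-f^*)$ from the optimal step-size to get the $(1-\tfrac{\beta}{4})$ contraction and the outer-iteration count by contradiction, and then sum the per-outer-iteration inner bounds as a geometric series with ratio $(1-\tfrac{\beta}{4})^2$ dominated by the last term of order $\beps^{-2}$. The only (cosmetic) difference is that you combine $\rho_k$ and $\Delta_k-\epsilon_1-\epsilon_2$ into a single lower bound $\tfrac{3g_k^2}{16D^2}$ before summing, whereas the paper carries separate geometric lower bounds for each indexed from $K^I_{\max}$; both yield the same constant $\frac{32G^2D^2}{3(1-\beta)^2(2-\beta/4)\beps^2}$.
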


By comparing Theorems~\ref{thm:const_main} and~\ref{thm:ideal_main}, we see that to find an $\mathcal{O}(\beps)$-optimal solution, Algorithm~\ref{alg.main} with constant step sizes needs $\mathcal{O}(\frac{1}{\beps})$ outer iterations and $\mathcal{O}(\frac{1}{\beps^3})$ inner iterations, while Algorithm~\ref{alg.main} with optimal step sizes only requires $\mathcal{O}(\log(1/\beps))$ outer iterations and $\mathcal{O}(\frac{1}{\beps^2})$ inner iterations. In particular, Theorem \ref{thm:ideal_main} shows that Algorithm \ref{alg.main} with the optimal step-size takes $\mathcal{O}\left(\frac{1}{\beps^2}\right)$ function/gradient evaluations to reach an $\mathcal{O}\left(\beps\right)$-optimal solution, matching the lower bound of first-order gradient-based methods for solving non-smooth convex optimization problems; see \cite[Theorem~7.2.1]{Nemi95}.
This fact effectively demonstrates the advantage of the optimal step-size variant of Algorithm~\ref{alg.main}.

\begin{proof}[Proof of Theorem~\ref{thm:ideal_main}]
The proof follows the same structure as the proof of Theorem \ref{thm:const_main}. The difference is mainly the third step, where we utilize the optimal step-size policy \eqref{eq.ideal_step_rule} to bound the total number of iterations. Without loss of generality, we assume $f(x_{0,0})-f^*>\delta^I$, thus $K^I > 0$.

\textbf{Part 1 (Length of the inner loop)} \quad With the same argument as the first part in the proof of Theorem~\ref{thm:const_main}, we know that \eqref{eq.Tk_bound} holds.

\textbf{Part 2 (Objective decrease on serious iterates)} \quad It follows from the same proof as in the second part of the proof of Theorem~\ref{thm:const_main} that \eqref{eq.obj_decay} holds.

\textbf{Part 3 (Total outer and inner iterations)} \quad
We prove the first statement by contradiction. Suppose for any outer iteration $k\le K^I$, it holds that $f(\xk{0})-f^* > \delta^I=\frac{4\beps}{\beta}$. 
It follows from \eqref{eq.obj_decay} and \eqref{eq.Deltak_lb_gap_ideal} that
\begin{equation}\label{eq:ideal_recur}
\begin{aligned}
f(x_{k+1,0})-f^*&\le f(x_{k,0})- f^* - \left(\beta\Delta_k - \beps \right) \\
&< f(x_{k,0})- f^* - \left(\frac{\beta}{2}(f(x_{k,0})- f^*) - \frac{\beta}{4}(f(x_{k,0})- f^*)\right) \\
&= \pran{1-\frac{\beta}{4}}(f(x_{k,0})- f^*).
\end{aligned}
\end{equation}
Therefore, there is a linear decay in the optimal function value gap after each serious step. Thus,
$$
f(x_{K^I,0})-f^* \le \pran{1-\frac{\beta}{4}}^{K_I}(f(x_{k,0})- f^*)\le \delta^I,
$$
which leads to a contradiction and proves the first part of the theorem.
Next, we consider the total number of inner iterations. Let's denote $K^I_{\max}\geq 0$ as the first outer iteration such that $f(x_{K^I_{\max},0}) - f^* > \delta^I$ and $f(x_{K^I_{\max}+1,0}) - f^* \leq \delta^I$. It follows \eqref{eq.Deltak_lb_gap_ideal} that
\begin{equation}\label{eq:Delta_Kmax_lb}
\Delta_{K^I_{\max}} \ge \frac{1}{2}(f(x_{K^I_{\max},0})-f^*) > \frac{\delta^I}{2} = \frac{2\bar\epsilon}{\beta} \ge \bar\epsilon + \epsilon_1+\epsilon_2,
\end{equation}
where the last equality and the last inequality are from definitions of $\delta^I$ and $\beps$ from the theorem statement. Meanwhile, using \eqref{eq.ideal_step_rule} and Assumption~\ref{ass:noise_fg}, we have
\begin{equation}\label{eq:rho_Kmax_lb}
\rho_{K^I_{\max}} = \frac{\hf_{K^I_{\max}}(x_{K^I_{\max},0})-\epsilon_2-f^*}{D^2} \geq \frac{f(x_{K^I_{\max},0}) - f^* - \epsilon_1 - \epsilon_2}{D^2} > \frac{\delta^I - \epsilon_1 - \epsilon_2}{D^2} > \frac{3\beps}{\beta D^2},
\end{equation}
where the last inequality uses definitions of $\delta^I$ and $\beps$ from the theorem statement. 
Furthermore, from \eqref{eq:ideal_recur} and the same logic as in \eqref{eq:Delta_Kmax_lb} and \eqref{eq:rho_Kmax_lb}, for any $k\in\{0,\ldots,K^I_{\max}\}$,
\begin{equation}\label{eq:lb_Deltak_rhok}
\begin{aligned}
\Delta_{k}& \ge \frac{1}{2}(f(x_{k,0})-f^*) \geq \frac{f(x_{K^I_{\max},0})-f^*}{2(1-\frac{\beta}{4})^{(K^I_{\max}-k)}} > \frac{\delta^I}{2(1-\frac{\beta}{4})^{(K^I_{\max}-k)}} \\
&= \frac{2\bar\epsilon}{\beta(1-\frac{\beta}{4})^{(K^I_{\max}-k)}} \ge \frac{\bar\epsilon}{(1-\frac{\beta}{4})^{(K^I_{\max}-k)}} + \epsilon_1+\epsilon_2, \\
\text{and}\quad \rho_{k} &= \frac{\hf_{k}(x_{k,0})-\epsilon_2-f^*}{D^2} \geq \frac{f(x_{k,0}) - f^* - \epsilon_1 - \epsilon_2}{D^2} \\
&\geq \frac{\frac{f(x_{K^I_{\max},0})-f^*}{(1-\frac{\beta}{4})^{(K^I_{\max}-k)}} - \epsilon_1 - \epsilon_2}{D^2} > \frac{\frac{\delta^I}{(1-\frac{\beta}{4})^{(K^I_{\max}-k)}} - \epsilon_1 - \epsilon_2}{D^2} > \frac{3\beps}{\beta(1-\frac{\beta}{4})^{(K^I_{\max}-k)} D^2}.
\end{aligned}
\end{equation}
Therefore, by \eqref{eq.Tk_bound} and \eqref{eq:lb_Deltak_rhok}, we obtain the total number of null steps is at most 
\begin{align*}
&\sum_{k=0}^{K^I_{\max}} \left(\left\lceil{\frac{8G^2}{\rho_k(1-\beta)^2(\Delta_k - \epsilon_1 - \epsilon_2)} - \frac{16}{(1-\beta)^2}}\right\rceil + 1\right) \\
< \ &\sum_{k=0}^{K^I_{\max}} \left(\left\lceil{\frac{8G^2}{\frac{3\beps}{\beta(1-\frac{\beta}{4})^{(K^I_{\max}-k)} D^2}\cdot(1-\beta)^2\cdot\frac{\bar\epsilon}{(1-\frac{\beta}{4})^{(K^I_{\max}-k)}}} - \frac{16}{(1-\beta)^2}}\right\rceil + 1\right) \\
= \ &\sum_{k=0}^{K^I_{\max}} \left(\left\lceil{\frac{8\beta(1-\frac{\beta}{4})^{2(K^I_{\max}-k)}G^2D^2}{3(1-\beta)^2\beps^2} - \frac{16}{(1-\beta)^2}}\right\rceil + 1\right) \\
\leq \ &\sum_{k=0}^{K^I_{\max}} \left(\frac{8\beta(1-\frac{\beta}{4})^{2(K^I_{\max}-k)}G^2D^2}{3(1-\beta)^2\beps^2}\right) \\
= \ &\frac{8\beta G^2D^2}{3(1-\beta)^2\beps^2}\cdot\frac{1-(1-\frac{\beta}{4})^{2(K^I_{\max}+1)}}{1-(1-\frac{\beta}{4})^2} < \frac{32G^2D^2}{3(1-\beta)^2(2-\frac{\beta}{4})\beps^2},
\end{align*}
which concludes the statement.
\end{proof}

We note that while the optimal step-size policy enjoys a strong theoretical convergence rate, it may not be a practical algorithm, because it requires knowing the optimal objective function value $f^*$. To overcome this issue, an alternative is to use multiple step sizes in parallel. More specifically, one can try out multiple step-size options $\rho = 2^{-\tau}\bar{\rho}$ with $\tau \in \{0,1,\ldots,M\}$ in parallel in the inner loop and use the first serious step found by one of the step-sizes. Following the same argument as that in \cite{ReneGrim22}, we believe one can obtain a similar complexity result of Theorem \ref{thm:ideal_main} up to an additional $\log$ term. However, to reach a reasonable tolerance, the value of $M$ is usually chosen around or larger than $20$, which means that the computational cost for one iteration is $20$ times more than that for a standard step. Thus, while the parallel step-size policy can have theoretical guarantees, it is not a practical algorithm.
We propose here a practical step-size policy that is motivated by the optimal step-size policy, where we use the current bundle function value to approximate $f^*$. More specifically, we set
\begin{equation}\label{eq:practical_stepsize_policy}
\rho_{k} = \begin{cases} 
C_P (\hf_k(\xk{0}) - f_{k-1,T_{k-1}-1}(\xk{0})) & \text{if } k\neq 0\text{ and }\hf_k(\xk{0}) > f_{k-1,T_{k-1}-1}(\xk{0}), \\
C_P & \text{otherwise,} \end{cases}
\end{equation}
where $C_P$ is a hyperparameter of the algorithm. 
One may see from \eqref{eq:practical_stepsize_policy} that we try to use $f_{k-1,T_{k-1}-1}(\xk{0}) = f_{k-1,T_{k-1}-1}(x_{k-1,T_{k-1}})$ to approximate $f^*$. If $\hf_k(\xk{0}) \le f_{k-1,T_{k-1}-1}(\xk{0})$, i.e., this specification does not lead to a valid (positive) step-size choice, then this result triggers a safeguard step-size by setting $\rho_{k} = C_P > 0$. While we do not have theoretical support for this algorithm, the numerical study in Section~\ref{sec.num_results} demonstrates this proposed method's superior behavior. 

\subsection{Complexity under the Sharpness Condition}\label{sec.conv_sharpness}

Sharpness is a well-studied condition in convex optimization and is defined below:

\begin{mydef}\label{def:sharpness}
A convex function $f:X\to\mathbb{R}$ is sharp with parameter $\mu>0$ if $$f(x) - f^* \geq \mu \cdot {\rm dist}(x,X^*)$$ for all $x\in X$, where $X^*$ is the optimal solution set of the function $f$, and ${\rm dist}(x,X^*) = \min_{x^*\in X^*}\|x - x^*\|$ is the distance between $x$ and $X^*$.
\end{mydef}

The sharpness condition characterizes the linear growth of the objective function values around the optimal solution set. One can easily see that a piecewise linear convex function is a sharp function. Notice that the objective $f(x)$ in a two-stage SLP with a finite support of $\xi$ is a piecewise linear function, thus it is a sharp function. In this subsection, we discuss how the complexity results can be improved under this additional sharpness condition. 

The next two theorems present the complexity results of Algorithm \ref{alg.main} with the constant step-size and optimal step-size under the sharpness condition.

\begin{thm}\label{thm:sharp_const}
Suppose Assumptions~\ref{ass:fixed_recourse}--\ref{ass:bundle_func} hold, and $f(x)$ is a $\mu$-sharp function. Consider Algorithm \ref{alg.main} with a constant step-size policy, i.e., $\rho_k=\rho := \frac{\beta\mu^2v}{2\bar\epsilon} > 0$ for all outer iterations $k\geq 0$, where $v\in (0,1)$ is a parameter of the algorithm. Denote 
\begin{equation*}
{\small
\delta^{SC} := \frac{4\bar\epsilon}{\beta} \quad \text{and} \quad K^{SC} := \begin{cases}
\max\left\{1,\left\lceil \frac{f(x_{0,0}) - f^* - \frac{4\beps}{\beta}}{\left(\frac{1}{v} - 1\right)\beps} \right\rceil + 1\right\} = \mathcal{O}\left(\frac{1}{\beps}\right) & \text{if }\frac{1}{2}\leq v < 1, \vspace{+1.5pt}\\
\max\left\{1,\left\lceil \frac{f(x_{0,0}) - f^* - \frac{4\beps}{\beta}}{\left(\frac{1}{v} - 1\right)\beps} \right\rceil + 1\right\} + \left\lceil -\log_{\left(1-\frac{\beta}{2}\right)}\left( \frac{1}{v} - 1 \right) \right\rceil + 1 = \mathcal{O}\left(\frac{1}{\beps}\right) &\text{if }0 < v < \frac{1}{2},
\end{cases}}
\end{equation*}
where $\beps$ is defined in \eqref{eq:beps} and $f^*$ is the optimal function value of $f:X\to\mathbb{R}$, then there exists an outer iteration $k \leq K^{SC}$ such that $f(\xk{0}) - f^* \leq \delta^{SC}$. Furthermore, the total number of inner steps (including null steps) before such a solution is found is at most 
\begin{equation*}
\begin{cases}
\max\left\{1,\left\lceil \frac{f(x_{0,0}) - f^* - \frac{4\beps}{\beta}}{\left(\frac{1}{v} - 1\right)\beps} \right\rceil + 1\right\} \cdot \left(\left\lceil{\frac{16}{(1-\beta)^2}\left(\frac{G^2}{(1-v)\mu^2} - 1\right)}\right\rceil + 1\right) = \mathcal{O}\left(\frac{1}{\beps}\right) & \text{if }\frac{1}{2}\leq v < 1, \\
\max\left\{1,\left\lceil \frac{f(x_{0,0}) - f^* - \frac{2\beps}{\beta v}}{\left(\frac{1}{v} - 1\right)\beps} \right\rceil + 1\right\} \cdot \left(\left\lceil{\frac{16}{(1-\beta)^2}\left(\frac{G^2}{(1-v)\mu^2} - 1\right)}\right\rceil + 1\right)  \\ \quad\quad\quad\quad\quad\quad\quad\quad\quad\quad\quad\quad\quad\quad\quad\quad\quad\quad\quad\quad\quad\quad + \frac{32G^2}{\mu^2v\beta(1-\beta)^2} = \mathcal{O}\left(\frac{1}{\beps}\right) &\text{if }0 < v < \frac{1}{2}.
\end{cases}
\end{equation*}
\end{thm}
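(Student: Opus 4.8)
The plan is to keep the three-part skeleton of the proof of Theorem~\ref{thm:const_main}, reuse Parts~1 and~2 unchanged, and replace Part~3 with a sharpness-driven two-phase argument. Since the inner-loop length bound \eqref{eq.Tk_bound} and the serious-step descent inequality \eqref{eq.obj_decay} were already shown to hold for an arbitrary step-size policy, I would invoke them directly. All the gain from sharpness is funneled through a single sharper lower bound on the proximal gap $\Delta_k$, so the central first step is to prove that, writing $G_k := f(\xk{0})-f^*$,
$$\Delta_k \ge \min\bracket{\tfrac{1}{2}G_k,\ \tfrac{\mu^2}{2\rho}}.$$
To establish this I would let $x^\ast$ be the projection of $\xk{0}$ onto $X^*$, so $\|\xk{0}-x^\ast\|={\rm dist}(\xk{0},X^*)=:d_k$, and evaluate the proximal objective from \eqref{eq.Delta_k} at the segment point $\xk{0}+t(x^\ast-\xk{0})\in X$, $t\in[0,1]$. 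Convexity gives $f\le f(\xk{0})-tG_k$ there, hence $\Delta_k\ge tG_k-\tfrac{\rho t^2 d_k^2}{2}$; maximizing over $t\in[0,1]$ yields $\tfrac12 G_k$ when the unconstrained maximizer exceeds $1$ and $\tfrac{G_k^2}{2\rho d_k^2}$ otherwise, and in the latter case the sharpness inequality $d_k\le G_k/\mu$ turns the second quantity into the constant floor $\tfrac{\mu^2}{2\rho}$. Substituting $\rho=\tfrac{\beta\mu^2 v}{2\beps}$ gives $\tfrac{\mu^2}{2\rho}=\tfrac{\beps}{\beta v}$, so $\Delta_k\ge\min\{\tfrac12 G_k,\tfrac{\beps}{\beta v}\}$.

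The outer-iteration analysis then splits according to whether the threshold $\tfrac{2\beps}{\beta v}$ (above which the minimum equals the constant $\tfrac{\beps}{\beta v}$) is at most or larger than $\delta^{SC}=\tfrac{4\beps}{\beta}$, which is precisely the dichotomy between $v\ge\tfrac12$ and $v<\tfrac12$. When $v\ge\tfrac12$, any iterate with $G_k>\delta^{SC}$ already satisfies $G_k>\tfrac{2\beps}{\beta v}$, so $\Delta_k\ge\tfrac{\beps}{\beta v}$ throughout; combining with \eqref{eq.obj_decay} gives the constant per-step decrease $G_{k+1}\le G_k-\pran{\tfrac1v-1}\beps$, and a contradiction argument identical to Part~3 of Theorem~\ref{thm:const_main} produces the stated $\mathcal{O}(1/\beps)$ outer count. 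When $v<\tfrac12$ I would run two phases: a constant-decrease phase while $G_k>\tfrac{2\beps}{\beta v}$, followed by a geometric-contraction phase in which $\Delta_k\ge\tfrac12 G_k$ forces $G_{k+1}-\tfrac{2\beps}{\beta}\le\pran{1-\tfrac{\beta}{2}}\pran{G_k-\tfrac{2\beps}{\beta}}$; solving for when this contracting sequence drops below $\delta^{SC}$ produces the $\lceil-\log_{(1-\beta/2)}(\tfrac1v-1)\rceil+1$ term.

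For the inner-iteration totals I would feed the $\Delta_k$ bounds into \eqref{eq.Tk_bound}. In the constant-decrease regime one checks $\Delta_k-\epsilon_1-\epsilon_2\ge\tfrac{(1-v)\beps}{\beta v}$ (using $\epsilon_1+\epsilon_2=\tfrac{\beps}{\beta+1}\le\tfrac{\beps}{\beta}$), which collapses \eqref{eq.Tk_bound} into the uniform per-outer-iteration bound $\lceil\tfrac{16}{(1-\beta)^2}\pran{\tfrac{G^2}{(1-v)\mu^2}-1}\rceil+1$; multiplying by the outer count disposes of the $v\ge\tfrac12$ case and Phase~1 of the $v<\tfrac12$ case. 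The genuinely delicate part, and the step I expect to be the main obstacle, is the Phase~2 inner count for $v<\tfrac12$: there $\Delta_k\ge\tfrac12 G_k$ with $G_k$ itself shrinking, so the per-iteration bound \eqref{eq.Tk_bound} degrades as $G_k\to\delta^{SC}$, and a naive (number of steps)$\times$(worst case) product introduces a spurious logarithmic factor. The fix is to use $\Delta_k-\epsilon_1-\epsilon_2>\tfrac12\Delta_k\ge\tfrac14 G_k$ (valid since $\Delta_k>\tfrac{2\beps}{\beta}$ in Phase~2) and then to sum $\sum 1/G_k$ over Phase~2 by exploiting that $G_k-\tfrac{2\beps}{\beta}$ contracts geometrically; this telescopes to a constant multiple of $1/\beps$ rather than scaling with the number of steps, collapsing the geometric sum into the single additive term $\tfrac{32G^2}{\mu^2 v\beta(1-\beta)^2}$.
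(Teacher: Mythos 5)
Your proposal is correct and follows essentially the same route as the paper: your segment-point maximization of $tG_k-\tfrac{\rho t^2 d_k^2}{2}$ reproduces the paper's Lemma~\ref{lem:proximal_bound_sharp}, and the $v\ge\tfrac12$ versus $v<\tfrac12$ dichotomy, the two-phase outer analysis, and the geometric summation of the per-outer-iteration inner counts in Phase~2 are exactly the paper's Part~3. The one quantitative quibble is that in Phase~2 the paper keeps the tighter estimate $\Delta_k-\epsilon_1-\epsilon_2\ge\tfrac12\left(f(\xk{0})-f^*-\tfrac{2\beps}{\beta}\right)$ and sums the reciprocals of the geometrically contracting quantity $f(\xk{0})-f^*-\tfrac{2\beps}{\beta}$, whereas your looser $\Delta_k-\epsilon_1-\epsilon_2\ge\tfrac14\left(f(\xk{0})-f^*\right)$ delivers $\tfrac{64G^2}{\mu^2v\beta(1-\beta)^2}$ rather than the stated $\tfrac{32G^2}{\mu^2v\beta(1-\beta)^2}$ for the additive term, so you should retain the $-\tfrac{2\beps}{\beta}$ shift to recover the theorem's constant.
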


\begin{thm}\label{thm:sharp_ideal}
Suppose Assumptions~\ref{ass:fixed_recourse}--\ref{ass:bundle_func} hold, and $f(x)$ is a $\mu$-sharp function. Consider Algorithm \ref{alg.main} taking $\rho_k= \frac{\mu^2}{\hf_k(\xk{0})-\epsilon_2-f^*} > 0$ for all outer iterations $k\geq 0$. Let's define
\begin{equation}\label{eq:ideal_neighborhood_sharp}
\delta^{SI} := \frac{4\bar\epsilon}{\beta} \quad \text{and} \quad K^{SI} := \max\left\{0,\left\lceil-\log_{\left(1-\frac{\beta}{2}\right)}\left(\frac{f(x_{0,0}) - f^* - \frac{3\beps}{\beta}}{\frac{\beps}{\beta}}\right)\right\rceil\right\} = \mathcal{O}\left(\log\left(\frac{1}{\beps}\right)\right)
\end{equation}
where $\beps$ is defined in \eqref{eq:beps} and $f^*$ is the optimal function value of $f:X\to\mathbb{R}$, then there exists an outer iteration $k \leq K^{SI}$ such that $f(\xk{0}) - f^* \leq \delta^{SI}$. Furthermore, the total number of bundle steps (including null steps) before such a solution is found is at most
\begin{equation*}
\left(\frac{16G^2}{(1-\beta)^2\mu^2\left(1 - \frac{2\beta}{3(\beta+1)} \right)}\right) \cdot K^{SI} = \mathcal{O}\left(\log\left(\frac{1}{\beps}\right)\right).
\end{equation*}
\end{thm}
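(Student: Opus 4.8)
The plan is to follow the same three-part template as the proof of Theorem~\ref{thm:const_main}, reusing Parts~1 and~2 essentially verbatim and rewriting only the third part to exploit sharpness together with the specific step-size $\rho_k = \mu^2/\hat g_k$. Throughout I abbreviate $g_k := f(x_{k,0}) - f^*$ and $\hat g_k := \hf_k(x_{k,0}) - \epsilon_2 - f^*$, so that $\rho_k = \mu^2/\hat g_k$ (positivity of $\hat g_k$ is exactly the hypothesis $\rho_k>0$). As the authors observe right after Theorem~\ref{thm:const_main}, the inner-loop-length bound \eqref{eq.Tk_bound} and the serious-step decrease \eqref{eq.obj_decay} hold for \emph{any} positive step-size, so both are available here without change. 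I will also use the sandwich $g_k-(\epsilon_1+\epsilon_2)\le \hat g_k \le g_k$ from Assumption~\ref{ass:noise_fg} and the identity $\epsilon_1+\epsilon_2 = \beps/(\beta+1)$.

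The first new ingredient is a sharpness-improved lower bound on the proximal gap. Evaluating the proximal objective in \eqref{eq.Delta_k} at the projection of $x_{k,0}$ onto $X^*$ and invoking ${\rm dist}(x_{k,0},X^*)\le g_k/\mu$ from Definition~\ref{def:sharpness} gives $\Delta_k \ge g_k - \frac{\rho_k}{2\mu^2}g_k^2 = g_k - \frac{g_k^2}{2\hat g_k}$. Inserting the step-size is precisely what renders the subtracted term comparable to $g_k/2$; using $\hat g_k \ge g_k-(\epsilon_1+\epsilon_2)$ this becomes $\Delta_k \ge g_k - \frac{g_k^2}{2(g_k-(\epsilon_1+\epsilon_2))}$. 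This interaction between sharpness and the step-size is the heart of the argument and is what upgrades the $\mathcal{O}(1/\beps^2)$ rate of Theorem~\ref{thm:ideal_main} to a logarithmic one.

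For the outer-iteration count I would argue by contradiction, assuming $g_k > \delta^{SI} = 4\beps/\beta$ for every $k\le K^{SI}$. Feeding the $\Delta_k$ bound into \eqref{eq.obj_decay} and splitting $\frac{g_k^2}{2(g_k-(\epsilon_1+\epsilon_2))} = \frac{g_k}{2} + \frac{g_k(\epsilon_1+\epsilon_2)}{2(g_k-(\epsilon_1+\epsilon_2))}$ yields $g_{k+1} \le (1-\frac{\beta}{2})g_k + \frac{\beta g_k(\epsilon_1+\epsilon_2)}{2(g_k-(\epsilon_1+\epsilon_2))} + \beps$. A short computation shows the middle term is at most $\beps/2$ exactly when $g_k\ge \beps$, which holds since $g_k> 4\beps/\beta > \beps$; this produces the clean contraction $g_{k+1} - \frac{3\beps}{\beta} \le (1-\frac{\beta}{2})(g_k - \frac{3\beps}{\beta})$. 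Iterating and using the definition of $K^{SI}$ forces $g_{K^{SI}} \le 4\beps/\beta = \delta^{SI}$, contradicting the assumption, so some $k\le K^{SI}$ satisfies $g_k\le \delta^{SI}$.

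For the total inner-step count the key observation is that, unlike the non-sharp case of Theorem~\ref{thm:ideal_main}, the quantity $\rho_k(\Delta_k-(\epsilon_1+\epsilon_2))$ in the denominator of \eqref{eq.Tk_bound} is bounded below by a \emph{uniform} constant multiple of $\mu^2$: as $g_k$ shrinks, $\rho_k = \mu^2/\hat g_k$ grows to compensate, so no geometric sum is needed and each $T_k$ is uniformly controlled. Concretely, writing $\phi := (\epsilon_1+\epsilon_2)/g_k$, the condition $g_k>\delta^{SI}$ is equivalent to $\phi < \frac{\beta}{4(\beta+1)}$, and a monotonicity argument (the worst case being $\hat g_k = g_k-(\epsilon_1+\epsilon_2)$) reduces the task to proving $\rho_k(\Delta_k-(\epsilon_1+\epsilon_2)) \ge \frac{\mu^2}{2}\bigl(1-\frac{2\beta}{3(\beta+1)}\bigr)$. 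This is where the main effort lies: after simplification it reads $\frac{\phi(2-\phi)}{(1-\phi)^2} \le \frac{2\beta}{3(\beta+1)}$, whose left side is increasing in $\phi$, so it suffices to check $\phi=\frac{\beta}{4(\beta+1)}$, where it collapses to the polynomial inequality $3(\beta+1)(7\beta+8) \le 2(3\beta+4)^2$, i.e. $3\beta^2-3\beta-8\le 0$, valid for all $\beta\in(0,1)$. Plugging this uniform bound into \eqref{eq.Tk_bound} bounds each $T_k$ by the stated constant $\frac{16G^2}{(1-\beta)^2\mu^2(1-2\beta/3(\beta+1))}$, and since there are at most $K^{SI}$ outer iterations with $g_k>\delta^{SI}$, summing gives the total. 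I expect the genuinely delicate steps to be the sharpness-based lower bound on $\Delta_k$ and this final polynomial verification; the conceptual crux is recognizing that the step-size $\rho_k=\mu^2/\hat g_k$ makes $\rho_k(\Delta_k-(\epsilon_1+\epsilon_2))$ uniformly bounded below, which is what turns a polynomial inner-iteration count into a logarithmic one.
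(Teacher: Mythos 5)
Your proposal is correct and follows the same three-part skeleton as the paper's proof (reuse of \eqref{eq.Tk_bound} and \eqref{eq.obj_decay}, the contraction $g_{k+1}-\tfrac{3\beps}{\beta}\le(1-\tfrac{\beta}{2})(g_k-\tfrac{3\beps}{\beta})$, and a uniform per-outer-iteration bound on $T_k$), and it lands on exactly the stated constants. The one place you genuinely diverge is the sharpness-based lower bound on the proximal gap: the paper's Lemma~\ref{lem:proximal_bound_sharp} evaluates the proximal objective at the convex combination $\lambda_k x^*+(1-\lambda_k)x_{k,0}$ with $\lambda_k=\mu^2/(\rho_k(f(x_{k,0})-f^*))$, which yields the clean bound $\Delta_k\ge\mu^2/(2\rho_k)=\hat g_k/2$; you instead evaluate at $x^*$ itself and get $\Delta_k\ge g_k-\tfrac{g_k^2}{2\hat g_k}$, which one checks is weaker by exactly $\tfrac{(g_k-\hat g_k)^2}{2\hat g_k}$. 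Your bound still suffices because $\hat g_k\ge g_k-(\epsilon_1+\epsilon_2)$ keeps the loss small, but it is what forces your heavier endgame: the inequality $\tfrac{\phi(2-\phi)}{(1-\phi)^2}\le\tfrac{2\beta}{3(\beta+1)}$ and the polynomial check $3\beta^2-3\beta-8\le 0$ (both of which I verified are correct, including the monotonicity in $\hat g_k$ that justifies taking the worst case $\hat g_k=g_k-(\epsilon_1+\epsilon_2)$). The paper's route replaces all of that with the one-line observation $\tfrac{2(\epsilon_1+\epsilon_2)}{\hat g_k}<\tfrac{2\beta}{3(\beta+1)}$, using $\hat g_k>\tfrac{3\beps}{\beta}$. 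So: same architecture, same conclusion, but the paper's choice of test point in the proximal gap buys a substantially simpler final computation; if you adopt $\Delta_k\ge\mu^2/(2\rho_k)$ your Part~3 algebra collapses to a few lines.
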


Comparing Theorems~\ref{thm:const_main}--\ref{thm:sharp_ideal}, we can see that the sharpness condition (Definition~\ref{def:sharpness}) improves the convergence property of Algorithm~\ref{alg.main} for both constant and optimal step-size policies. For a constant step-size policy,
Theorem \ref{thm:const_main} shows that Algorithm \ref{alg.main} needs $\mathcal{O}\left(\frac{1}{\beps^3}\right)$ number of inner iterations to reach an $\mathcal{O}\left(\beps\right)$-optimal iterate, while with the sharpness condition, Theorem \ref{thm:sharp_const} shows that Algorithm~\ref{alg.main} needs $\mathcal{O}\left(\frac{1}{\beps}\right)$ total inner iterations to find an $\mathcal{O}(\beps)$-optimal iterate. For the optimal step-size policy, Theorem \ref{thm:ideal_main} shows that Algorithm \ref{alg.main} needs $\mathcal{O}\left(\frac{1}{\beps^2}\right)$ number of inner iterations to reach an $\mathcal{O}\left({\beps}\right)$-optimal iterate, while with the sharpness condition, Theorem \ref{thm:sharp_ideal} shows that the Algorithm~\ref{alg.main} only needs $\mathcal{O}\left(\log(\frac{1}{\beps})\right)$ total inner iterations to a similar iterate.

The proofs of Theorem \ref{thm:sharp_const} and Theorem \ref{thm:sharp_ideal} follow the same proof structure as that of Theorem \ref{thm:const_main} and  are presented in Appendix~\ref{sec.appendix_conv_sharpness}.

\subsection{Sample Complexity for Optimal Step-size Policy}\label{sec.sample_const}

The theoretical results in the previous sections assume the error term $\epsilon_1$ and $\epsilon_2$ are fixed across all function estimations (Assumption \ref{ass:noise_fg}). As such, the quality of the solution found by the algorithm depends on the value of $\epsilon_1$ and $\epsilon_2$. In practice, one can utilize more samples in every outer iteration to obtain a better approximation of the function $f(x)$. This way, one can find a solution with an arbitrarily small optimality gap. In this section, we present the number of samples Algorithm \ref{alg.main} with the optimal step-size needs to access to find a solution with a certain accuracy in high probability. We comment that one can also obtain similar sample complexity results for the other step-size policy studied in Section \ref{sec.const_step_policy} and Section \ref{sec.conv_sharpness}.

For ease of presentation, we define 
\begin{equation}\label{eq:f_xi}
f(x;\xi) = c^Tx + Q(x;\xi),
\end{equation}
where $Q(x;\xi)$ is from \eqref{prob.second_stage}. Then, $f(x;\xi)$ is an unbiased estimator of $f(x)$, i.e., $\mathbb{E}_{\xi}[f(x;\xi)] = f(x)$ for all $x\in X$. In this section, we assume that $f(x;\xi)$ comes from a sub-Gaussian distribution for any $x$, and there is a uniform bound on the variance of the sub-Gaussian distribution over $x\in X$, namely:

\begin{ass}\label{ass:sample_fg}
There is a constant $\sigma > 0$ such that for all $x\in X$ and $\tau > 0$ that
\begin{align}\label{eq:subGau}
\mathbb{E}[|f(x;\xi) - f(x)|^2] \leq \sigma^2\ , \text{and}\ \ 
\mathbb{P}[|f(x;\xi) - f(x)| \geq \tau\sigma] \leq 2{\rm exp}\left(-\frac{\tau^2}{2}\right)\quad.
\end{align}
\end{ass}

We comment that Assumption \ref{ass:sample_fg} is a natural assumption. Assuming a sub-Gaussian distribution essentially means that the samples concentrate around the mean value of the distribution. Suppose $f(x;\xi)$ has bounded value for any $\xi$ and $x$ (which is often the case for practical two-stage stochastic programming), then the sub-Gaussian distribution condition in Assumption~\ref{ass:sample_fg} is automatically satisfied. Since $X$ is a bounded and closed region (Assumption \ref{ass:fixed_recourse}), the uniform upper bound of the variance also naturally holds.

The next theorem presents a high probability convergence result of Algorithm \ref{alg.main} with an optimal step-size policy:
\begin{thm}\label{thm:const_sample}
Suppose Assumptions~\ref{ass:fixed_recourse},~\ref{ass:bound_hg},~\ref{ass:bundle_func} and~\ref{ass:sample_fg} hold. Consider Algorithm \ref{alg.main} with the optimal step-size. For any $\tilde{\epsilon}\in(0,1)$ and $\tau > 0$, we utilize the optimal step-size $\rho_k=\frac{\hf_k(\xk{0})-\tau\tilde{\epsilon}-f^*}{D^2}$ and randomly select a sample set $S_k$ with cardinality $|S_k| = \max\left\{1, \frac{\sigma^2}{\tilde{\epsilon}^2}\right\}$  at the $k$-th outer iteration. Denote
\begin{equation}\label{eq:const_neighborhood_sample}
{\delta}^S := \frac{8(\beta+1)\tilde{\epsilon}\tau}{\beta} \quad \text{and} \quad K^S := \max\left\{0, \left\lceil \frac{\log\pran{\frac{f(x_{0,0})-f^*}{{\delta}^S}}}{-\log\pran{1-\frac{\beta}{4}}} \right\rceil\right\},
\end{equation}
where $f^*$ is the optimal function value of $f:X\to\mathbb{R}$. Then, there exists an outer iteration $k \leq K^S$ such that $f(\xk{0}) - f^* \leq {\delta}^S$ with a probability at least $1-6K^S{\rm exp}\left(-\frac{\tau^2}{2}\right)$.
Furthermore, with a probability at least $1-6K^S{\rm exp}\left(-\frac{\tau^2}{2}\right)$, the total number of inner steps (including null steps) before such a solution is found is at most
\begin{equation}\label{eq:total-null-const}
\frac{8G^2D^2}{3(1-\beta)^2(2-\frac{\beta}{4})\tau^2\tilde\epsilon^2} =\mathcal{O}\pran{\frac{1}{\tau^2\tilde{\epsilon}^2}}\ .
\end{equation}
\end{thm}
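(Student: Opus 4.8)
The plan is to deduce Theorem~\ref{thm:const_sample} from Theorem~\ref{thm:ideal_main} by showing that, with the prescribed sample size, the sub-Gaussian concentration of Assumption~\ref{ass:sample_fg} furnishes---on a high-probability event---exactly the two-sided estimation bound of Assumption~\ref{ass:noise_fg} with $\epsilon_1=\epsilon_2=\tau\tilde{\epsilon}$ at every point that the analysis actually probes. First I would calibrate the sample size. Since $\hf_k(x)-f(x)=\frac{1}{|S_k|}\sum_{i\in S_k}(f(x;\xi_i)-f(x))$ is an average of $|S_k|$ independent sub-Gaussian terms (the exact part $c^Tx$ cancels), choosing $|S_k|=\max\{1,\sigma^2/\tilde{\epsilon}^2\}$ drives its sub-Gaussian proxy down to $\tilde{\epsilon}^2$, so Assumption~\ref{ass:sample_fg} yields $\mathbb{P}[\,|\hf_k(x)-f(x)|\ge\tau\tilde{\epsilon}\,]\le 2\exp(-\tau^2/2)$ for any fixed $x$. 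Under the identification $\epsilon_1=\epsilon_2=\tau\tilde{\epsilon}$ the parameters line up automatically: $\beps=(\beta+1)(\epsilon_1+\epsilon_2)=2(\beta+1)\tau\tilde{\epsilon}$ turns the $\delta^I$ and $K^I$ of Theorem~\ref{thm:ideal_main} into the stated $\delta^S$ and $K^S$, and its inner-iteration count $\tfrac{32G^2D^2}{3(1-\beta)^2(2-\beta/4)\beps^2}$ collapses to $\mathcal{O}(1/(\tau^2\tilde{\epsilon}^2))$.

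Next I would assemble the good event and run a union bound. For each outer iteration I would record the (at most) three points at which the deterministic proof invokes the estimation bound: the base iterate $\xk{0}$, the next serious iterate $x_{k+1,0}$, and the minimizer $x^*\in\arg\min_X f$ (or, in the unmodified argument, the exact proximal point $\bx_{k+1}$). Declaring the two-sided bound $|\hf_k(\cdot)-f(\cdot)|\le\tau\tilde{\epsilon}$ to hold at each of these for every $k\le K^S$ and taking a union bound gives the probability $1-6K^S\exp(-\tau^2/2)$ (three points, each a two-sided event worth $2\exp(-\tau^2/2)$). On this event the three-part template of Theorem~\ref{thm:ideal_main} applies with $\epsilon_1=\epsilon_2=\tau\tilde{\epsilon}$: Part~1 bounds the inner-loop length through \eqref{eq.Tk_bound}, Part~2 reproduces the serious-step decrease \eqref{eq.obj_decay}, and Part~3 telescopes the geometric decay $f(x_{k+1,0})-f^*\le(1-\beta/4)(f(\xk{0})-f^*)$ until the gap enters the $\delta^S$-neighborhood, giving some $k\le K^S$ with $f(\xk{0})-f^*\le\delta^S$.

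The delicate point---and the main obstacle---is that the pointwise bound of Assumption~\ref{ass:sample_fg} is only valid at points chosen independently of $S_k$, whereas $\bx_{k+1}$ and $x_{k+1,0}$ are produced by the inner loop from $\hf_k$ and therefore depend on the very samples in $S_k$; applying concentration at such adaptively chosen points naively would incur a selection bias. I would neutralize the dependence on $\bx_{k+1}$ by replacing the lower bound $\tDeltak{T_k-1}\ge\Delta_k-\epsilon_1-\epsilon_2$ of \eqref{eq:error} with one phrased through the proximal gap of $\hf_k$ itself: setting $\hat{\Delta}_k:=\hf_k(\xk{0})-\min_{x\in X}\{\hf_k(x)+\tfrac{\rho_k}{2}\|x-\xk{0}\|^2\}$, the bundle inequality $f_{k,T_k-1}\le\hf_k$ gives $\hat{\Delta}_k\le\tDeltak{T_k-1}$, and applying Lemma~\ref{lem:proximal_bound} to the convex function $\hf_k$ together with the single inequality $\min_{x\in X}\hf_k(x)\le f^*+\tau\tilde{\epsilon}$ (concentration at the \emph{fixed} point $x^*$, hence admissible) yields $\hat{\Delta}_k\ge\tfrac12(\hf_k(\xk{0})-f^*-\tau\tilde{\epsilon})$, the analogue of \eqref{eq.Deltak_lb_gap_ideal} using only fixed-point concentration. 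The residual dependence on $x_{k+1,0}$ I would handle through the loop structure: each serious iterate $x_{k+1,0}$ is the base point of outer iteration $k+1$, whose sample $S_{k+1}$ is drawn freshly and independently of it, so that the conversion from the estimated gap $\hf_k(\xk{0})-f^*$ back to the true gap $f(\xk{0})-f^*$ rests on $f(\xk{0})\le\hf_k(\xk{0})+\tau\tilde{\epsilon}$ at a point independent of the sample evaluating it, and the union bound runs over the at most $K^S+1$ serious iterates. Carefully arranging this conditioning---guaranteeing that each invoked concentration inequality is evaluated at a point independent of the sample evaluating it---is where the real work of the proof lies, and it is precisely what the deterministic template of Theorem~\ref{thm:ideal_main} does not have to address.
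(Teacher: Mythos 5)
Your overall plan coincides with the paper's: the proof there is a re-run of the three-part template of Theorems~\ref{thm:const_main} and~\ref{thm:ideal_main} in which the deterministic errors $\epsilon_1,\epsilon_2$ are replaced by pointwise errors $\epsilon_k(x)=|\hf_k(x)-f(x)|$, controlled at three points per outer iteration (namely $\xk{0}$, $\bx_{k+1}$ and $x_{k+1,0}$) via the sub-Gaussian bound after averaging over $|S_k|=\max\{1,\sigma^2/\tilde\epsilon^2\}$ samples, with a union bound over three two-sided events per iteration and $K^S$ iterations producing exactly the failure probability $6K^S{\rm exp}(-\tau^2/2)$. Your calibration $\epsilon_1=\epsilon_2=\tau\tilde\epsilon$, hence $\beps=2(\beta+1)\tau\tilde\epsilon$, reproduces $\delta^S$ and $K^S$, matches the prescribed step-size, and yields an inner-iteration count smaller than \eqref{eq:total-null-const} by a factor $(\beta+1)^2$, so the quantitative bookkeeping is correct.

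Where you genuinely depart from the paper is in trying to repair the adaptive-concentration issue, and there your argument is only half complete. The treatment of $\bx_{k+1}$ is sound and in fact cleaner than the paper's: bounding $\tDeltak{T_k-1}\ge\hat\Delta_k$ and applying Lemma~\ref{lem:proximal_bound} to the convex function $\hf_k$ together with $\min_{x\in X}\hf_k(x)\le\hf_k(x^*)\le f^*+\tau\tilde\epsilon$ requires concentration only at the fixed point $x^*$ and at $\xk{0}$, which is measurable with respect to $S_0,\dots,S_{k-1}$, so no selection bias arises. The fix for $x_{k+1,0}$, however, does not close the gap. The serious-step test certifies a decrease of the \emph{estimate}, $\hf_k(x_{k+1,0})\le\hf_k(\xk{0})-\beta(\cdots)$; converting this into a decrease of $f$ (or of the fresh estimate $\hf_{k+1}$) requires bounding $f(x_{k+1,0})-\hf_k(x_{k+1,0})$, and this term evaluates $\hf_k$ at a point selected by the inner loop as a function of $S_k$ --- selected, moreover, precisely because $\hf_k$ is small there, which is the worst case for selection bias. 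Drawing the fresh sample $S_{k+1}$ controls $\hf_{k+1}(x_{k+1,0})-f(x_{k+1,0})$ but leaves $\hf_k(x_{k+1,0})-f(x_{k+1,0})$ untouched, so the recursion cannot be closed as sketched; a genuine repair would need a uniform concentration bound over $X$ (e.g., via a covering argument exploiting the $G$-Lipschitz property) or a different descent certificate. For what it is worth, the paper's own proof does not address this point at all: it applies the pointwise bound \eqref{eq:fk_sample_subGauss} directly at $\bx_{k+1}$ and $x_{k+1,0}$ in \eqref{eq:error_sample} and \eqref{eq.obj_decay_sample}. If you accept that step, your proof is equivalent to the paper's; if you insist on justifying it --- as you rightly argue one should --- the argument you outline is not yet complete.
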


We note that Theorem~\ref{thm:const_sample} holds for any $\tilde{\epsilon}\in(0,1)$ and $\tau>0$. From \eqref{eq:const_neighborhood_sample} we know that by varying $\tilde{\epsilon}$ and $\tau$, Algorithm \ref{alg.main} can  identify a solution where the value is arbitrarily close to the optimal solution value. The better the quality of the solution, the smaller the value $\tilde{\epsilon}$ should be, and the more total inner iterations the algorithm may require. As a direct consequence of Theorem \ref{thm:const_sample}, we can obtain a high-probability sample complexity result for Algorithm \ref{alg.main}:

\begin{cor}\label{cor:sample_complexity}
Under the same conditions of Theorem~\ref{thm:const_sample}, for any given parameter $\zeta\in(0,1)$ and optimality tolerance $\delta^S \in (0,1)$, by choosing $\tau = \max\left\{1,\sqrt{2\log\left(\frac{\max\left\{1,6\cdot\left\lceil\log_{\left(1-\frac{\beta}{4}\right)}\left(\frac{\delta^S}{f(x_{0,0}) - f^*}\right)\right\rceil \right\}}{\zeta}\right)}\right\}$ and $\tilde\epsilon = \frac{\beta\delta^S}{8(\beta+1)\tau}$, then with probability at least $1-\zeta$, Algorithm~\ref{alg.main} requires
\begin{align*}
\frac{256G^2D^2(\beta+1)^2}{3(1-\beta)^2(2-\frac{\beta}{4})\beta^2\left(\delta^S\right)^2} = \mathcal{O}\left(\left(\frac{1}{\delta^S}\right)^2\right)
\end{align*}
number of bundle steps (including null steps) and 
\begin{align*}
\max\left\{\frac{256G^2D^2(\beta+1)^2}{3(1-\beta)^2(2-\frac{\beta}{4})\beta^2\left(\delta^S\right)^2}, \frac{16384\sigma^2G^2D^2(\beta+1)^4\tau^2}{3(1-\beta)^2(2-\frac{\beta}{4})\beta^4\left(\delta^S\right)^4}\right\} = \mathcal{O}\left(\frac{\log\left(\log\left(\frac{1}{\delta^S}\right)\right)}{\left(\delta^S\right)^4}\right)
\end{align*}
number of $f(x;\xi)$ samples to find an iterate $x_{k,0}\in X$ satisfying 
\begin{equation*}
f(x_{k,0}) - f^* \leq {\delta}^S. 
\end{equation*}
\end{cor}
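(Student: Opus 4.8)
The plan is to derive Corollary~\ref{cor:sample_complexity} directly from Theorem~\ref{thm:const_sample} by instantiating the two free parameters $\tilde\epsilon$ and $\tau$ so that two things happen at once: the convergence neighborhood in \eqref{eq:const_neighborhood_sample} collapses to exactly the prescribed tolerance $\delta^S$, and the failure probability is driven below $\zeta$. The first requirement is immediate. Since the neighborhood radius furnished by Theorem~\ref{thm:const_sample} is $\frac{8(\beta+1)\tilde\epsilon\tau}{\beta}$, the choice $\tilde\epsilon = \frac{\beta\delta^S}{8(\beta+1)\tau}$ makes this radius identically $\delta^S$. This choice also fixes the product $\tilde\epsilon\tau = \frac{\beta\delta^S}{8(\beta+1)}$, which is the only combination of the two parameters that enters the inner-iteration bound; the split between $\tilde\epsilon$ and $\tau$ individually matters only for the per-iteration sample size $|S_k| = \max\{1,\sigma^2/\tilde\epsilon^2\}$.

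For the probability I would first record that $K^S$ in Theorem~\ref{thm:const_sample} depends only on the given tolerance $\delta^S$ through $\lceil \log_{(1-\beta/4)}(\delta^S/(f(x_{0,0})-f^*))\rceil$, and not on $\tau$; hence the prescribed $\tau$ is well defined and the construction is not circular. From $\tau \geq \sqrt{2\log(\max\{1,6K^S\}/\zeta)}$ one obtains $\exp(-\tau^2/2) \leq \zeta/\max\{1,6K^S\}$, so the failure probability $6K^S\exp(-\tau^2/2)$ guaranteed by Theorem~\ref{thm:const_sample} is at most $\zeta$. Consequently, the optimality guarantee $f(x_{k,0})-f^*\leq\delta^S$ and the inner-step bound hold simultaneously with probability at least $1-\zeta$.

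Substituting $\tilde\epsilon\tau = \frac{\beta\delta^S}{8(\beta+1)}$ into the inner-step bound $\frac{8G^2D^2}{3(1-\beta)^2(2-\beta/4)\tau^2\tilde\epsilon^2}$ of Theorem~\ref{thm:const_sample} yields the stated $\mathcal{O}\pran{(1/\delta^S)^2}$ count of bundle (inner) steps, since $\tau^2\tilde\epsilon^2 = (\tilde\epsilon\tau)^2$ is proportional to $(\delta^S)^2$. For the total number of $f(x;\xi)$ samples, the key observation is that every inner iteration evaluates the estimator $\hf_k$ over the entire sample set $S_k$, so the total sample count is the product of the inner-iteration bound and the per-iteration sample size $|S_k| = \max\{1,\sigma^2/\tilde\epsilon^2\}$. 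Carrying out this multiplication produces a bound proportional to $\tau^2/(\delta^S)^4$; taking the maximum against the bundle-step count (which matches the sample count in the regime $|S_k|=1$) produces the $\max\{\cdot,\cdot\}$ form in the statement.

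Finally I would convert the explicit $\tau$ into asymptotic order: since $K^S = \mathcal{O}\pran{\log(1/\delta^S)}$, the inner logarithm in the definition of $\tau$ is $\mathcal{O}\pran{\log(\log(1/\delta^S)/\zeta)}$, so for fixed $\zeta$ we have $\tau^2 = \mathcal{O}\pran{\log\log(1/\delta^S)}$, and feeding this into the sample bound delivers the advertised $\mathcal{O}\pran{\log\log(1/\delta^S)/(\delta^S)^4}$ rate. I expect the main bookkeeping obstacle to be the self-consistent verification of the probability bound when $K^S$ is small (the $\max\{1,\cdot\}$ safeguards in both $\tau$ and $|S_k|$ exist precisely to cover the boundary cases $K^S=0$ and $\sigma^2/\tilde\epsilon^2<1$), together with the conceptual point that samples are counted per inner iteration rather than per outer iteration, which is exactly what upgrades the rate from $(\delta^S)^{-2}$ to $(\delta^S)^{-4}$.
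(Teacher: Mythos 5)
Your proposal is correct and follows essentially the same route as the paper's proof: choose $\tilde\epsilon$ so that the neighborhood $\frac{8(\beta+1)\tilde\epsilon\tau}{\beta}$ equals $\delta^S$, choose $\tau$ so that $6K^S\exp(-\tau^2/2)\le\zeta$ (noting $K^S$ depends only on $\delta^S$, so no circularity), substitute into the inner-step bound of Theorem~\ref{thm:const_sample}, and multiply by the per-evaluation sample size $\max\{1,\sigma^2/\tilde\epsilon^2\}$ to obtain the total sample count. Your added observations---that only the product $\tilde\epsilon\tau$ enters the iteration bound while the split affects only $|S_k|$, and that counting samples per inner iteration is what produces the $(\delta^S)^{-4}$ rate---are accurate and consistent with the paper's accounting.
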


\section{A Numerical Study}\label{sec.num_results}

In this section, we present numerical experiments to demonstrate the numerical performance of Algorithm~\ref{alg.main}. We test the performance of different algorithms on seven classic instances of two-stage SLP from various sources (\cite{ShapHome98,LindShapWrig06,OlivSagaSche11}). The \texttt{SH10} and \texttt{SH31} instances are stored in .jl files, while the other five instances are stored in .smps files. The basic statistics of the test problems are listed in Table~\ref{table.prob_info}. Throughout the numerical experiments of Algorithm~\ref{alg.main}, we utilize the bundle function \eqref{eq:limited-memory} with $J_{k,t+1}^g = \{\max\{0,t-3\},\ldots,t+1\}$ and $J_{k,t+1}^s = \{\max\{1,t-3\},\ldots,t+1\}$ for all $k\geq 0$ and $t\geq 0$. In Section~\ref{sec.num_validation}, we test the convergence performance of Algorithm~\ref{alg.main} with constant, optimal, and practical step-size policies discussed in Section~\ref{sec.perf_guarantee}. In Section~\ref{sec.num_comparison}, we compare our proposed algorithm with two alternative solvers, an L-shaped method implementation in \cite{BielJoha22} and the Gurobi solver \citep{Guro23}. 

\begin{table}[h]
\caption{Basic statistics of the test instances used in numerical experiments.}\label{table.prob_info}
\centering
{\footnotesize
\begin{tabular}{|l|l|l|l|l|l|}
\hline
Problem & \begin{tabular}[c]{@{}l@{}} $\#$ of Variables \\ (First Stage) \end{tabular} & \begin{tabular}[c]{@{}l@{}} $\#$ of Constraints \\ (First Stage) \end{tabular} & \begin{tabular}[c]{@{}l@{}} $\#$ of Variables \\ (Second Stage) \end{tabular} & \begin{tabular}[c]{@{}l@{}} $\#$ of Constraints \\ (Second Stage) \end{tabular} & $\#$ of Scenarios \\ \hline
\texttt{20term} & 63 & 3 & 764 & 124 & $1.09951\times 10^{12}$ \\ \hline
\texttt{Gbd} & 17 & 4 & 10 & 5 & 646425 \\ \hline
\texttt{LandS} & 4 & 2 & 12 & 7 & $1.0\times 10^6$ \\ \hline
\texttt{SH10} & 10 & 5 & 15 & 10 & $10000$ \\ \hline
\texttt{SH31} & 10 & 5 & 15 & 10 & $10000$ \\ \hline
\texttt{SSN} & 89 & 1 & 706 & 175 & $1.01750556 \times 10^{70}$ \\ \hline
\texttt{Storm} & 121 & 185 & 1259 & 528 & $6.018531 \times 10^{81}$ \\ \hline
\end{tabular}}
\end{table}

\subsection{Numerical Performance of Algorithm~\ref{alg.main}}\label{sec.num_validation}

In this subsection, we present numerical results of Algorithm~\ref{alg.main} with different step-size and sample sizes on problem \texttt{SH31} to demonstrate how parameters affect the performance of the algorithm.

Figure~\ref{fig.stepsize_comp} shows the optimality gap versus the number of outer iterations of the algorithm with three different step-size choices:
\begin{itemize}
    \item \textbf{Constant step-size:} $\rho_k = \rho$ for all $k\geq 0$, with $\rho\in \{1000,100,10,1,0.1\}$.
    \item \textbf{Optimal step-size:} $\rho_k = C_I\cdot (f(\xk{0}) - f^*)$ for all $k\geq 0$ with $C_I\in \{1000,100,10,1,0.1\}$.
    \item \textbf{Practical step-size:} $\rho_{k}$ is defined by \eqref{eq:practical_stepsize_policy} with $C_P\in \{1000,100,10,1,0.1\}$.
\end{itemize}
In this experiment, the sample size for each outer iteration is fixed at $|S_k| = 100$,  and we choose the descent parameter $\beta = 0.5$ and set the budget of the total inner iterations to $1000$. Here are a few observations: (i) Algorithm \ref{alg.main} has strong numerical performance with all three step-size rules. The best step-size in each case can identify a solution within $10^{-3}$ accuracy within 100 number of outer iterations;
(ii) The best solution obtained by the constant, optimal, and practical step-size has a similar order of error. This is also consistent with our theory that both constant and optimal step-size can find an $\mathcal{O}(\beps)$-optimal solution (see Theorems~\ref{thm:const_main} and~\ref{thm:ideal_main}); 
(iii) Under the constant step-size rule, as long as $\rho$ is not too large, $\mathcal{O}(\beps)$-optimal solutions could always be found within the iteration budget. Since the test problem \texttt{SH31} is a two-stage SLP with a finite support, which is also a sharp function, this observation is consistent with the result of Theorem \ref{thm:sharp_const}; 
(iv) The performance of the practical step-size algorithm is as competitive as the optimal step-size algorithm.

\begin{figure}[ht]
  \centering
  \includegraphics[width=0.3\textwidth]
{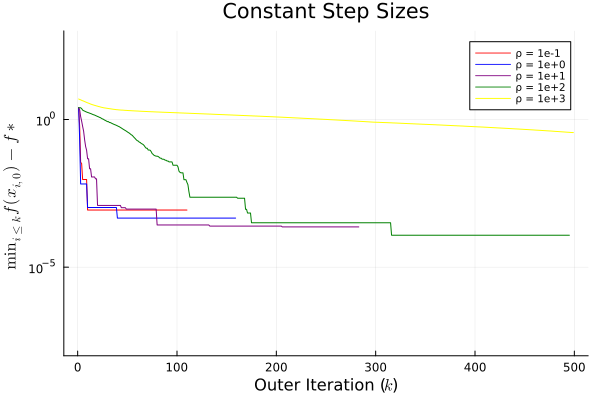}\qquad 
\includegraphics[width=0.3\textwidth]
{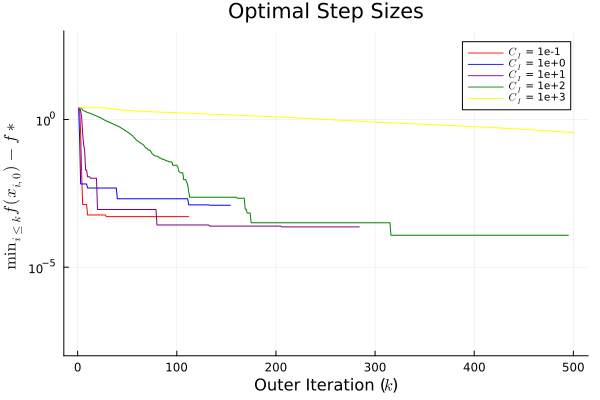}\qquad
\includegraphics[width=0.3\textwidth]
{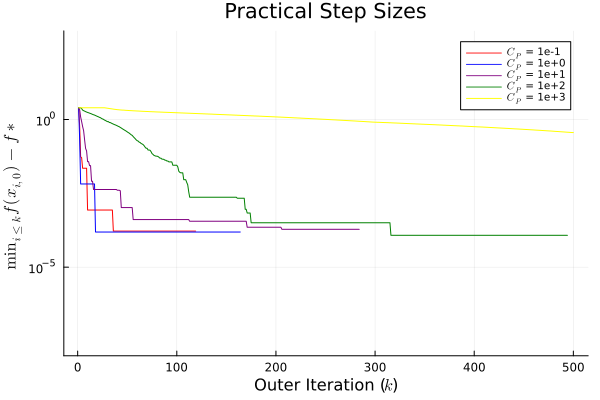}
  \caption{Plots showing the optimality gap versus the number of outer iterations of Algorithm \ref{alg.main} on problem \texttt{SH31} for constant (left), optimal (middle), and practical (right) step-size policies, with sample size being $|S_k| = 100$.}
  \label{fig.stepsize_comp}
\end{figure}

Figure~\ref{fig.scen_comp} reports the optimality gap versus outer iteration of Algorithm \ref{alg.main} with constant step-size policy and different sample sizes for each outer iteration. As we can see, for all of the step-size value, the more samples used in the function evaluation,  the better the solution obtained by the algorithm. This is consistent with Theorem \ref{thm:const_main}: more samples used in the algorithm lead to a smaller $\beps$, and can improve the performance of the algorithm.

\begin{figure}[ht]
  \centering
  \includegraphics[width=0.45\textwidth]
{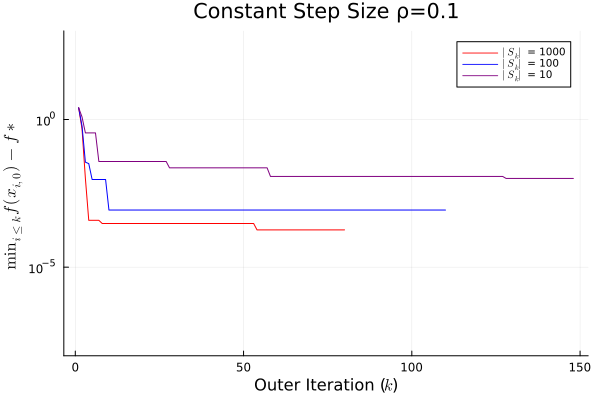}\qquad \includegraphics[width=0.45\textwidth]
{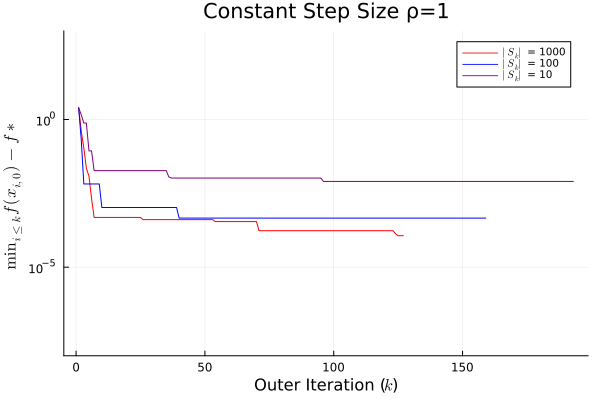} \\
\includegraphics[width=0.45\textwidth]
{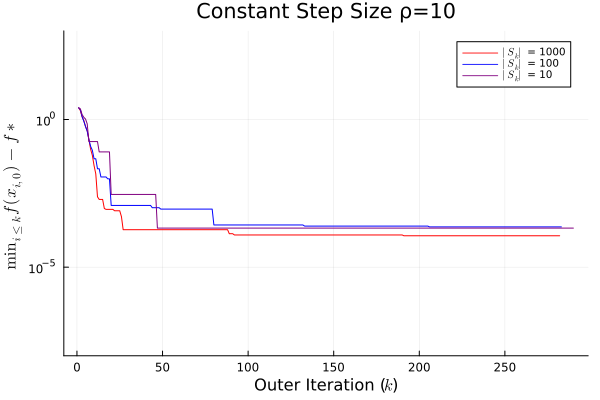}\qquad
\includegraphics[width=0.45\textwidth]
{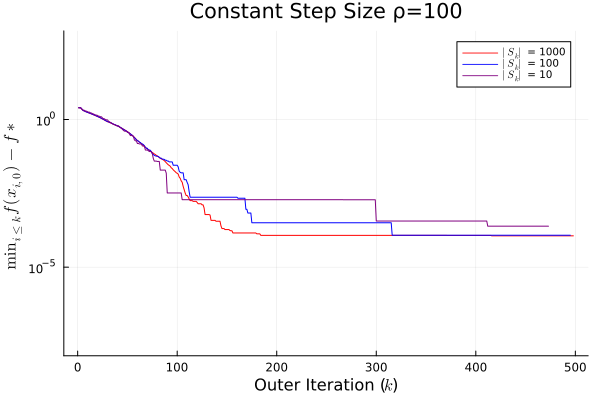}
  \caption{Plots of the optimal objective function gap from the best iterate ever found on problem \texttt{SH31} for different constant step sizes: $\rho = 0.1$ (top left), $\rho = 1$ (top right), $\rho = 10$ (bottom left), and $\rho = 100$ (bottom right).}
  \label{fig.scen_comp}
\end{figure}

\subsection{Comparison with an L-shaped Method and Gurobi}\label{sec.num_comparison}

In this subsection, we compare Algorithm~\ref{alg.main} with an L-shaped method implementation (\cite{BielJoha22}) and the Gurobi solver (\cite{Guro23}) on the test problems listed in Table~\ref{table.prob_info} to demonstrate the efficiency and effectiveness of our proposed method. All methods have been run with a \texttt{CPU} time budget of 12 hours and a memory budget of 10\texttt{GB}. 

For Algorithm~\ref{alg.main}, in addition to the aforementioned budgets on time and memory, we set an extra iteration budget of $10,000$ inner iterations. For these instances, we may not know $f^*$ for large-scale instances before running the algorithm, so here we only report the performance of the algorithm with the constant step-size policy and the practical step-size policy. For the constant step-size policy, we use $\rho\in\{100,10,1,0.1\}$; for the practical step-size policy, we use $C_P\in \{100,10,1,0.1\}$. The sample size $|S_k|$ of the algorithm is set as $100$, namely, for each outer iteration $k$, we randomly sample $100$ scenarios to construct the function estimate and the (sub)gradient estimate. We set the descent parameter $\beta = 0.5$. We run the algorithm with 10 random seeds and report the average performance of the algorithm with the best step-size choice. More specifically, for each step-size choice and each random seed, we run Algorithm \ref{alg.main}, and record the last 50 inner iterates of the algorithm. Then, we sample a new set of $1000$ scenarios (i.e., a larger batch of independent scenarios) to evaluate the objective value of these last 50 iterates, and compute the smallest evaluated objective value over the 50 iterates with the $1000$ scenario as a progress metric of the performance. We report the average and the $95\%$ confidence interval of this progress metric for the algorithm with the best step-size choice for constant step-size policy and practical step-size policy in Table~\ref{table.opt_obj_est}.

We report lower bound estimates of optimal objective function values utilizing a similar approach as in \cite{LindShapWrig06}. More specifically, for each problem listed in Table~\ref{table.prob_info}, we randomly select $100$ scenarios to generate a function estimation. Then, we formulate the function estimation as a linear programming problem. We generate 50 function estimations (by using 5000 scenarios in total), used Gurobi to solve them, and report the average value and $95\%$-confidence intervals of lower bounding estimates in Table~\ref{table.opt_obj_est}. One can show that on average, this approach provides a valid (and perhaps strict) lower bound of the optimal objective value.

We compare our numerical results with \texttt{StochasticPrograms.jl} (\cite{BielJoha22}), a solver for two-stage stochastic programming in Julia, and the Gurobi solver (\cite{Guro23}). \texttt{StochasticPrograms.jl} is an implementation of a certain L-shaped algorithm, and can directly solve a .jl and a .smps file. For Gurobi, we reformulate the two-stage SLP as a big linear programming before solving. These two approaches are perhaps the state-of-the-art solvers for two-stage SLP.

Table~\ref{table.opt_obj_est} reports the numerical results, where ``---$"$ represents that the corresponding approach fails to solve the test problem within the aforementioned time and memory budget. As we can see, \texttt{StochasticPrograms.jl} and Gurobi can only solve the two smallest instances within the budget constraint, while Algorithm \ref{alg.main} with constant and practical step-size policies is capable of being used for solving larger instances. Furthermore, for most of the instances (except \texttt{SSN} and practical step-size policy for \texttt{Storm}), Algorithm \ref{alg.main} can obtain a solution that is reasonably close to the lower bound estimate. Indeed, for $\texttt{SSN}$, it is unclear to us whether the optimality gap comes from the algorithm performance or the lower bound estimation, since the lower bound estimation may lead to a strict lower bound of the optimal value. 
Meanwhile, for \texttt{Storm}, we guess Algorithm~\ref{alg.main} with practical step sizes does not perform as well as the constant step-size policy because the problem has too many scenarios which makes the practical step-size algorithm hard to estimate $f^*$ at a high accuracy.
We also observe that the reported upper bound of the algorithm may be smaller than the lower bound estimates for some instances such as \texttt{Gbd}, \texttt{SH10}, \texttt{SH31}. This is mostly due to the randomness of both the algorithm and the lower bound estimates.

\begin{table}[h]
\caption{Averaged optimal objective function values/estimates}\label{table.opt_obj_est}
\centering
{\tiny
\begin{tabular}{|l|c|c|c|c|c|}
\hline
& \begin{tabular}[c]{@{}c@{}} Algorithm~\ref{alg.main} with \\ Constant Step Sizes \end{tabular} & \begin{tabular}[c]{@{}c@{}} Algorithm~\ref{alg.main} with \\ Practical Step Sizes \end{tabular} & \begin{tabular}[c]{@{}c@{}} Lower Bound \\ Estimates \end{tabular}  & \texttt{StochasticPrograms.jl} & {Gurobi} \\ \hline
\texttt{20term} & 255551.563 $\pm$ 551.646 & 257667.760 $\pm$ 1181.913 &  254415.431 $\pm$ 229.594 & --- & --- \\ \hline
\texttt{Gbd} & 1659.987 $\pm$ 10.944 & 1660.352 $\pm$ 10.883 &  1668.121 $\pm$ 13.775 & --- & --- \\ \hline
\texttt{LandS} & 226.689 $\pm$ 0.808 & 226.690 $\pm$ 0.808 & 226.306 $\pm$ 1.633 & --- & --- \\ \hline
\texttt{SH10} & 15.148 $\pm$ 0.014 & 15.147 $\pm$ 0.014 & 15.153 $\pm$ 0.018  & 15.154 & 15.152 \\ \hline
\texttt{SH31} & 24.980 $\pm$ 0.151 & 24.979 $\pm$ 0.152 & 25.004 $\pm$ 0.216  & 24.988 & 24.988 \\ \hline
\texttt{SSN} & 9.756 $\pm$ 0.278 & 9.755 $\pm$ 0.277 & 7.260 $\pm$ 0.434 & --- & --- \\ \hline
\texttt{Storm} & 15500662.539 $\pm$ 6610.305 & 17736732.210 $\pm$ 748981.205 & 15498615.274 $\pm$ 9690.737 & --- & --- \\ \hline
\end{tabular}}
\end{table}

\bibliographystyle{plainnat}
{\small{\bibliography{references,Lu-papers}}}

\newpage 

\section{Appendix}
\subsection{Supplemental Proofs for Section~\ref{sec.const_step_policy}}\label{app:proof_const}
To begin with, let's prove Lemma~\ref{lem:proximal_bound}, which provides a lower bound for the proximal gap $\Delta_k$.

\begin{proof}[Proof of Lemma~\ref{lem:proximal_bound}]
It follows from \eqref{eq.Delta_k} that for any $x\in X$,
\begin{equation}\label{eq.Delta_lb}
\Delta_k\ge f(\xk{0})-\pran{f(x)+\frac{\rho_k}{2}\|x-\xk{0}\|^2}.
\end{equation}
Given any $k\geq 0$, if $f(\xk{0})-f^* > \rho_k D^2$, we select $x := x^*\in X$ in \eqref{eq.Delta_lb} such that $f(x^*) = f^*$, then it follows Assumption~\ref{ass:fixed_recourse} that
\begin{equation*}
\begin{aligned}
\Delta_k &\geq f(\xk{0}) - \left(f^* + \frac{\rho_k}{2}\|x^*-\xk{0}\|^2\right) \\
&\geq \frac{f(\xk{0}) - f^*}{2} + \frac{f(\xk{0}) - f^* - \rho_kD^2}{2} > \frac{f(\xk{0}) - f^*}{2}.
\end{aligned}
\end{equation*}
Otherwise, when $f(\xk{0})-f^* \leq \rho_k D^2$, let's define $\lambda_k = \frac{f(\xk{0})-f^*}{\rho_k D^2} \in [0,1]$ and $x^*\in X$ such that $f(x^*) = f^*$. By choosing $x = \lambda_k x^* + (1-\lambda_k)\xk{0} \in X$ (since $X$ is a convex set) in \eqref{eq.Delta_lb}, from the convexity of $f$, we know $f(x) \leq \lambda_kf^* + (1-\lambda_k)f(\xk{0})$. Moreover, it follows from Assumption~\ref{ass:fixed_recourse} and \eqref{eq.Delta_lb} that
\begin{equation*}
\begin{aligned}
\Delta_k &\geq f(\xk{0})-\pran{f(x)+\frac{\rho_k}{2}\|x-\xk{0}\|^2} \\
&\geq \lambda_k(f(\xk{0})-f^*)-\frac{\rho_k \lambda_k^2}{2}\|\xk{0}-x^*\|^2 \\
&\geq \lambda_k(f(\xk{0})-f^*)-\frac{\rho_k \lambda_k^2}{2}D^2 
=\frac{(f(\xk{0})-f^*)^2}{2\rho_kD^2},
\end{aligned}
\end{equation*}
which finishes the proof.
\end{proof}

Next, we provide the proof of Lemma~\ref{lem:Deltakt_connection}.

\begin{proof}[Proof of Lemma~\ref{lem:Deltakt_connection}]

To start with, we are going to show that $\|s_{k,t}\| := \|\rho_k(\xk{0} - \xk{t})\| \leq G$ for any $k\geq 0$ and $1 \leq t \leq T_k$. By the update of $\xk{t}$ in Algorithm~\ref{alg.main}, the convexity of bundle function $f_{k,t-1}$ (see Assumption~\ref{ass:bundle_func} and the definition of $f_{k,0}$ in Algorithm~\ref{alg.main}), and the fact of $\{\xk{0},\xk{t}\}\subset X$, we know that for any $k\geq 0$ and $1 \leq t\leq T_k$
\begin{align*}
f_{k,t-1}(\xk{t}) + \frac{\rho_k}{2}\|\xk{t} - \xk{0}\|^2 + \frac{\rho_k}{2}\|\xk{t} - \xk{0}\|^2 \leq f_{k,t-1}(\xk{0}),
\end{align*}
which further implies that
\begin{align*}
G\|\xk{t} - \xk{0}\| \geq f_{k,t-1}(\xk{0}) - f_{k,t-1}(\xk{t}) \geq \rho_k\|\xk{t} - \xk{0}\|^2 = \|s_{k,t}\|\|\xk{t} - \xk{0}\|,
\end{align*}
where the first inequality follows Assumptions~\ref{ass:bound_hg} and~\ref{ass:bundle_func} and the last equality applies the definition of $s_{k,t}$. Therefore, we complete the first part of the proof.

Next, we are going to prove \eqref{eq:combined_both}. For any $k \geq 0$ and $1\leq t \leq T_k-1$, let's define a function $\tfk{t}:\RR^{n}\to\RR$ as
\begin{equation}\label{eq:def_tildefk}
\tfk{t}(x)=\max\left\{\fk{t-1}(\xk{t})+\sk{t}^T(x-\xk{t}), \hf_{k}(\xk{t})+\hg_{k}(\xk{t})^T(x-\xk{t})\right\}.
\end{equation}

Then it follows from Assumption~\ref{ass:bundle_func} that $\tfk{t}(x)\le \fk{t}(x)$ for all $x\in\RR^n$. Let's denote
\begin{equation*}
\yk{t+1}=\arg\min_{x\in\RR^n}\left\{\tfk{t}{(x)} +\frac{\rho_{k}}{2}\|x-\xk{0}\|^2\right\},
\end{equation*}
which is unique since $\rho_k>0$ and $\tfk{t}$ is convex.  Using $\theta_{k,t}$ in the lemma statement, let's define 
\begin{equation}\label{eq:theta-y}
\zk{t+1}=\xk{0}-\frac{1}{\rho_k}\pran{\theta_{k,t} \hg_{k}(\xk{t}) + (1-\theta_{k,t})\sk{t}}.
\end{equation} 
Next, from the uniqueness of $\yk{t+1}$, we are going to prove $\yk{t+1} = \zk{t+1}$ by showing $\rho_k(\xk{0} - \zk{t+1}) \in \partial \tfk{t}(\zk{t+1})$. We consider two cases as follows.
\begin{itemize}
\item[(a)] If $\rho_k(\hf_k(\xk{t})-\fk{t-1}(\xk{t})) \geq \|\hg_{k}(\xk{t})-\sk{t}\|^2$, then $\theta_{k,t} = 1$ and $\zk{t+1} = \xk{0} - \frac{1}{\rho_k}\hg_k(\xk{t})$ by \eqref{eq:theta-y}. We further have
\begin{equation}\label{eq:compare_case_a}
\begin{aligned}
&\hf_k(\xk{t}) - \fk{t-1}(\xk{t}) + (\hg_k(\xk{t}) - s_{k,t})^T(\zk{t+1} - \xk{t}) \\
\geq \ &\frac{1}{\rho_k}\|\hg_{k}(\xk{t})-\sk{t}\|^2 + (\hg_k(\xk{t}) - s_{k,t})^T\left(\xk{0} - \xk{t} - \frac{1}{\rho_k}\hg_k(\xk{t})\right) \\
= \ &(\hg_k(\xk{t}) - s_{k,t})^T\left( \xk{0} - \xk{t} - \frac{1}{\rho_k}s_{k,t} \right) = 0,
\end{aligned}
\end{equation}
where the last equality comes from the definition of $s_{k,t}$. In addition, \eqref{eq:compare_case_a} implies that
\begin{equation*}
\tfk{t}(\zk{t+1}) = \hf_{k}(\xk{t})+\hg_{k}(\xk{t})^T(\zk{t+1}-\xk{t});
\end{equation*}
see \eqref{eq:def_tildefk}. Then we further know that $\rho_k(\xk{0} - \zk{t+1}) = \hg_{k}(\xk{t}) \in \partial \tfk{t}(\zk{t+1})$.
\item[(b)] Otherwise, when $\rho_k(\hf_k(\xk{t})-\fk{t-1}(\xk{t})) < \|\hg_{k}(\xk{t})-\sk{t}\|^2$, from \eqref{eq:initialization}, the convexity of function $\hf_k$, and Assumptions~\ref{ass:noise_fg} and~\ref{ass:bundle_func}, we know that $\fk{t}(x) \le \hf_k(x) \leq f(x) + \epsilon_2$ for all $k\geq 0$ and $0\leq t\leq T_k-1$. Therefore, $\theta_{k,t} = \frac{\rho_k(\hf_k(\xk{t})-\fk{t-1}(\xk{t}))}{\|\hg_{k}(\xk{t})-\sk{t}\|^2} \in [0,1)$ by the definition of $\theta_{k,t}$. Moreover,
\begin{equation}\label{eq:compare_case_b}
\begin{aligned}
&\hf_k(\xk{t}) - \fk{t-1}(\xk{t}) + (\hg_k(\xk{t}) - s_{k,t})^T(\zk{t+1} - \xk{t}) \\
= \ &\hf_k(\xk{t}) - \fk{t-1}(\xk{t}) + (\hg_k(\xk{t}) - s_{k,t})^T\left(\xk{0} - \xk{t} - \frac{1}{\rho_k}\pran{\theta_{k,t} \hg_{k}(\xk{t}) + (1-\theta_{k,t})\sk{t}}\right) \\
= \ &\hf_k(\xk{t}) - \fk{t-1}(\xk{t}) + (\hg_k(\xk{t}) - s_{k,t})^T\left(\frac{\theta_{k,t}}{\rho_k}(\sk{t} - \hg_k(\xk{t}))\right) \\
= \ &\hf_k(\xk{t}) - \fk{t-1}(\xk{t}) - \frac{\theta_{k,t}}{\rho_k}\|\hg_k(\xk{t}) - s_{k,t}\|^2 = 0,
\end{aligned}
\end{equation}
where the first two equations use definitions of $\zk{t+1}$ and $\sk{t}$, while the last equation is from the definition of $\theta_{k,t}$. Furthermore, \eqref{eq:compare_case_b} implies that $\partial \tfk{t}(\zk{t+1}) = \textbf{co}(\{\sk{t},\hg_k(\xk{t})\})$, and then we conclude 
\begin{equation*}
\rho_k(\xk{0} - \zk{t+1}) = \theta_{k,t}\hg_k(\xk{t}) + (1-\theta_{k,t})\sk{t} \in \textbf{co}(\{\sk{t},\hg_k(\xk{t})\}) = \partial \tfk{t}(\zk{t+1}).
\end{equation*}
\end{itemize}

Thus, we conclude $\yk{t+1} = \zk{t+1} = \xk{0}-\frac{1}{\rho_k}\pran{\theta_{k,t} \hg_{k}(\xk{t}) + (1-\theta_{k,t})\sk{t}}$ from \eqref{eq:theta-y}. By the facts of $\theta_{k,t}\in[0,1]$ and $\tfk{t}(x)\le \fk{t}(x)$ for all $x\in\RR^n$ and the definitions of $\tfk{t}$ and $\yk{t+1}$, we have 
\begin{equation}\label{eq:big-1}
    \begin{aligned}
        &\fk{t}(\xk{t+1})+\frac{\rho_k}{2}\|\xk{t+1}-\xk{0}\|^2 \\
        \ge \ &\tfk{t}(\xk{t+1})+\frac{\rho_k}{2}\|\xk{t+1}-\xk{0}\|^2 \\
        \ge \ &\tfk{t}(\yk{t+1})+\frac{\rho_k}{2}\|\yk{t+1}-\xk{0}\|^2 \\
        \ge \ &\theta_{k,t}\pran{\hf_k(\xk{t})+\hg_{k}(\xk{t})^T(\yk{t+1}-\xk{t})} + (1-\theta_{k,t})\pran{\fk{t-1}(\xk{t})+\sk{t}^T(\yk{t+1}-\xk{t})}+\frac{\rho_k}{2}\|\yk{t+1}-\xk{0}\|^2 \\
        = \ & \fk{t-1}(\xk{t}) + \theta_{k,t}(\hf_k(\xk{t})-\fk{t-1}(\xk{t}))+(\theta_{k,t}\hg_{k}(\xk{t})+(1-\theta_{k,t})\sk{t})^T(\yk{t+1}-\xk{t}) +\frac{\rho_k}{2}\|\yk{t+1}-\xk{0}\|^2 \\
        = \ & \fk{t-1}(\xk{t}) + \theta_{k,t}(\hf_k(\xk{t})-\fk{t-1}(\xk{t}))+\rho_k(\xk{0}-\yk{t+1})^T(\yk{t+1}-\xk{t}) +\frac{\rho_k}{2}\|\yk{t+1}-\xk{0}\|^2 \\
        = \ & \fk{t-1}(\xk{t}) + \theta_{k,t}(\hf_k(\xk{t})-\fk{t-1}(\xk{t}))+\frac{\rho_k}{2}(\xk{0}-\yk{t+1})^T(\yk{t+1}+\xk{0}-2\xk{t}) \\
        = \ & \fk{t-1}(\xk{t}) + \theta_{k,t}(\hf_k(\xk{t})-\fk{t-1}(\xk{t}))+\frac{\rho_k}{2}\|\xk{t}-\xk{0}\|^2-\frac{\rho_k}{2}\|\yk{t+1}-\xk{t}\|^2.
    \end{aligned}
\end{equation}
Meanwhile, we also have
\begin{equation}\label{eq:norm}
    \begin{aligned}
        \frac{\rho_k}{2}\|\yk{t+1}-\xk{t}\|^2 &= \frac{\rho_k}{2}\left\|\pran{\xk{0}-\frac{1}{\rho_k}\pran{\theta_{k,t} \hg_{k}(\xk{t}) + (1-\theta_{k,t})\sk{t}}} - \pran{\xk{0}-\frac{1}{\rho_k} \sk{t}}\right\|^2 \\
        &=\frac{\theta_{k,t}^2}{2\rho_k}\|\sk{t}-\hg_{k}(\xk{t})\|^2.
    \end{aligned}
\end{equation}

Combining \eqref{eq:big-1} and \eqref{eq:norm}, and recalling the definition of $\tilde\Delta_{k,t}$ in \eqref{eq.inner_prox_gap}, we obtain for any $k\geq 0$ and $1 \leq t\le T_k-1$ that
\begin{equation}
\begin{aligned}
    \tDeltak{t} &= \hf_k(\xk{0}) - \left(\fk{t}(\xk{t+1}) + \frac{\rho_k}{2}\|\xk{t+1} - \xk{0}\|^2\right) \\
    &\le \hf_k(\xk{0}) - \left(\fk{t-1}(\xk{t}) + \theta_{k,t}(\hf_k(\xk{t})-\fk{t-1}(\xk{t}))+\frac{\rho_k}{2}\|\xk{t}-\xk{0}\|^2-\frac{\rho_k}{2}\|\yk{t+1}-\xk{t}\|^2\right) \\
    &= \tDeltak{t-1} -\theta_{k,t}(\hf_k(\xk{t})-\fk{t-1}(\xk{t}))+\frac{\theta_{k,t}^2}{2\rho_k}\|\sk{t}-\hg_{k}(\xk{t})\|^2,
\end{aligned}
\end{equation}
which concludes the statement.
\end{proof}

\subsection{Supplemental Proofs for Section~\ref{sec.conv_sharpness}}\label{sec.appendix_conv_sharpness}

To prove Theorems~\ref{thm:sharp_const} and~\ref{thm:sharp_ideal}, we first present a lemma similar to Lemma~\ref{lem:proximal_bound}, which provides a lower bound for the proximal gap $\Delta_k$ under the additional sharpness condition.

\begin{lem}\label{lem:proximal_bound_sharp}
Suppose $f(x)$ is a $\mu$-sharp function. For any outer iteration $k\geq 0$, from Algorithm~\ref{alg.main} we have
\begin{equation}\label{eq:proximal_bound_sharp}
\Delta_k \ge \left\{ \begin{array}{cl}
    \mu^2/(2\rho_k) & \text{ if } f(\xk{0})-f^* \ge \mu^2/\rho_k, \\
    (f(\xk{0})-f^*)/2 & \text{ otherwise, } 
\end{array} \right.
\end{equation}
where $\rho_k>0$ is the step size at the $k$-th outer iteration and $f^*$ represents the optimal value of function $f:X\to\RR$.
\end{lem}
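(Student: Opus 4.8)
The plan is to follow the proof of Lemma~\ref{lem:proximal_bound} almost verbatim, replacing the crude diameter bound $\|x-\xk{0}\|\le D$ with a sharper distance estimate supplied by sharpness. As there, the only property of $\bar x_{k+1}$ I need is that it minimizes $f(x)+\frac{\rho_k}{2}\|x-\xk{0}\|^2$ over $X$, so that for every $x\in X$
\[
\Delta_k \ge f(\xk{0}) - \pran{f(x) + \frac{\rho_k}{2}\|x-\xk{0}\|^2}.
\]
The one genuinely new idea---and the step I regard as the crux---is to take $x^*\in X^*$ to be the \emph{nearest} optimal point to $\xk{0}$, i.e.\ $\|\xk{0}-x^*\| = {\rm dist}(\xk{0},X^*)$. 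This is precisely what lets Definition~\ref{def:sharpness} bite: it gives ${\rm dist}(\xk{0},X^*)\le (f(\xk{0})-f^*)/\mu$, so writing $g:=f(\xk{0})-f^*$ we obtain $\|\xk{0}-x^*\|\le g/\mu$. In effect, sharpness furnishes a gap-dependent surrogate $g/\mu$ for the fixed diameter $D$ used in Lemma~\ref{lem:proximal_bound}.

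Next I would run the same line-search argument: substitute the convex combination $x=\lambda x^* + (1-\lambda)\xk{0}\in X$ (feasible since $X$ is convex) into the displayed inequality. Convexity of $f$ yields $f(x)\le f(\xk{0})-\lambda g$, while $\|x-\xk{0}\| = \lambda\|x^*-\xk{0}\|\le \lambda g/\mu$; combining these gives
\[
\Delta_k \ge \lambda g - \frac{\rho_k\lambda^2}{2}\cdot\frac{g^2}{\mu^2}.
\]

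Finally I would maximize this concave quadratic in $\lambda$ over $[0,1]$; its unconstrained maximizer is $\lambda^\star = \mu^2/(\rho_k g)$. When $g\ge \mu^2/\rho_k$ we have $\lambda^\star\le 1$, and plugging $\lambda=\lambda^\star$ back in collapses the two terms to $\mu^2/(2\rho_k)$, the first branch of \eqref{eq:proximal_bound_sharp}. When $g<\mu^2/\rho_k$ the constrained maximum is attained at the endpoint $\lambda=1$, and since then $\rho_k g/\mu^2<1$ we get $\Delta_k\ge g-\frac{\rho_k g^2}{2\mu^2}\ge g/2$, the second branch. The two estimates agree at the threshold $g=\mu^2/\rho_k$, so no boundary inconsistency arises. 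Apart from selecting the nearest optimal point, every step is identical in spirit to Lemma~\ref{lem:proximal_bound}, so I expect no real difficulty beyond that single observation.
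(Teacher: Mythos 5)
Your proposal is correct and follows essentially the same route as the paper: the paper's proof also substitutes the convex combination $\lambda x^* + (1-\lambda)x_{k,0}$ with $x^*$ the nearest optimal point, uses sharpness to bound $\|x_{k,0}-x^*\|$ by $(f(x_{k,0})-f^*)/\mu$, and chooses $\lambda = \mu^2/(\rho_k(f(x_{k,0})-f^*))$ in the first branch and $\lambda=1$ in the second, which is exactly your constrained maximization of the quadratic in $\lambda$.
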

\begin{proof}
It follows from \eqref{eq.Delta_k} that for any $x\in X$,
\begin{equation}\label{eq.Delta_lb_new}
\Delta_k\ge f(\xk{0})-\pran{f(x)+\frac{\rho_k}{2}\|x-\xk{0}\|^2}.
\end{equation}
If $f(\xk{0})-f^* < \mu^2/\rho_k$, by selecting $x = x^* := \arg\min_{x\in X^*}\|\xk{0} - x\|$ in \eqref{eq.Delta_lb_new}, it follows Definition~\ref{def:sharpness} that
\begin{equation*}
\begin{aligned}
\Delta_k &\geq f(\xk{0}) - \left(f^* + \frac{\rho_k}{2}\|x^*-\xk{0}\|^2\right) \\
&= \frac{f(\xk{0}) - f^*}{2} + \frac{f(\xk{0}) - f^* - \rho_k\|x^* - \xk{0}\|^2}{2} \\
&\geq \frac{f(\xk{0}) - f^*}{2} + \frac{f(\xk{0}) - f^* - (\rho_k/\mu^2)(f(\xk{0}) - f^*)^2}{2} \\
&= \frac{f(\xk{0}) - f^*}{2} + \frac{(f(\xk{0}) - f^*)(1 - (\rho_k/\mu^2)(f(\xk{0}) - f^*))}{2} > \frac{f(\xk{0}) - f^*}{2}.
\end{aligned}
\end{equation*}
Otherwise, when $f(\xk{0})-f^* \geq \mu^2/\rho_k$, let's define $\lambda_k = \frac{\mu^2}{\rho_k(f(\xk{0})-f^*)} \in (0,1]$ and $x^* = \arg\min_{x\in X^*}\|\xk{0} - x\|$. Choosing $x = \lambda_k x^* + (1-\lambda_k)\xk{0}$ it follows from \eqref{eq.Delta_lb_new} that
\begin{equation*}
\begin{aligned}
\Delta_k &\geq f(\xk{0})-\pran{f(x)+\frac{\rho_k}{2}\|x-\xk{0}\|^2} \geq \lambda_k(f(\xk{0})-f^*)-\frac{\rho_k \lambda_k^2}{2}\|\xk{0}-x^*\|^2 \\
&= \frac{\mu^2}{\rho_k} - \frac{\mu^4\|\xk{0} - x^*\|^2}{2\rho_k(f(\xk{0})-f(x^*))^2} \geq \frac{\mu^2}{\rho_k} - \frac{\mu^2}{2\rho_k} = \frac{\mu^2}{2\rho_k},
\end{aligned}
\end{equation*}
where the second inequality follows the definition of $x$ and $f(x) \leq \lambda_kf^* + (1-\lambda_k)f(\xk{0})$ due to the convexity of $f(x)$, the third inequality uses the sharpness of $f(x)$. Therefore, we conclude the statement.
\end{proof}

Now we are ready to prove Theorems~\ref{thm:sharp_const} and~\ref{thm:sharp_ideal}.

\begin{proof}[Proof of Theorem~\ref{thm:sharp_const}]
This proof includes three parts as follows.

\textbf{Part 1 (Length of the inner loop)} \quad With the same argument as the first part in the proof of Theorem~\ref{thm:const_main}, we know that \eqref{eq.Tk_bound} holds.

\textbf{Part 2 (Objective decrease on serious iterates)} \quad It follows the same proof as the second part in the proof of Theorem~\ref{thm:const_main} that \eqref{eq.obj_decay} holds.

\textbf{Part 3 (Total outer and inner iterations)} \quad Suppose $f(\xk{0}) - f^* > \delta^{SC}$. Let's first consider the case when $\frac{1}{2}\leq v < 1$. In this case, it follows from the definition of $\rho$ that $\delta^{SC} = \frac{4\beps}{\beta} \geq \frac{\mu^2}{\rho}$. From Lemma~\ref{lem:proximal_bound_sharp} and \eqref{eq.obj_decay}, we have
\begin{align*}
f(x_{k+1,0}) - f^* &\leq f(\xk{0}) - f^* - \left(\beta \Delta_k - \beps \right) \\
&\leq f(\xk{0}) - f^* - \left(\frac{\beta\mu^2}{2\rho} - \bar\epsilon\right) = f(\xk{0}) - f^* - \left(\frac{1}{v} - 1\right)\beps,
\end{align*}
where the second inequality comes from \eqref{eq:proximal_bound_sharp} and $f(\xk{0})-f^* \ge \mu^2/\rho$. 
Therefore, there are at most 
\begin{equation}\label{eq.sharp_const_iter_bound_1}
\begin{aligned}
\max\left\{0,\left\lceil \frac{f(x_{0,0}) - f^* - \delta^{SC}}{\left(\frac{1}{v} - 1\right)\beps} \right\rceil\right\} = \max\left\{0,\left\lceil \frac{f(x_{0,0}) - f^* - \frac{4\beps}{\beta}}{\left(\frac{1}{v} - 1\right)\beps} \right\rceil\right\}
\end{aligned}
\end{equation}
outer iterations such that $f(\xk{0}) - f^* > \delta^{SC} \geq \frac{\mu^2}{\rho}$. By the result of \textbf{Part 1} (see \eqref{eq.Tk_bound}), the definition of $\beps$, Lemma~\ref{lem:proximal_bound_sharp}, and the selection of constant step size $\rho := \frac{\beta\mu^2 v}{2\beps}$, each such an outer iteration uses at most
\begin{equation}\label{eq.sharp_const_inner_bound_1}
\begin{aligned}
&\left\lceil{\frac{8G^2}{\rho(1-\beta)^2(\Delta_k - \epsilon_1 - \epsilon_2)} - \frac{16}{(1-\beta)^2}}\right\rceil + 1 \leq \left\lceil{\frac{8G^2}{\rho(1-\beta)^2\left(\frac{\mu^2}{2\rho} - \frac{\bar\epsilon}{\beta}\right)} - \frac{16}{(1-\beta)^2}}\right\rceil + 1 \\
= \ &\left\lceil{\frac{8G^2}{\rho(1-\beta)^2\left(\frac{1}{v} - 1\right)\frac{\bar\epsilon}{\beta}} - \frac{16}{(1-\beta)^2}}\right\rceil + 1 = \left\lceil{\frac{16}{(1-\beta)^2}\left(\frac{G^2}{(1-v)\mu^2} - 1\right)}\right\rceil + 1
\end{aligned}
\end{equation}
inner iterations to take a serious step. Combining \eqref{eq.sharp_const_iter_bound_1} and \eqref{eq.sharp_const_inner_bound_1}, we know the total number of inner steps (including null steps) before an iterate $\xk{0}$ satisfying $f(\xk{0}) - f^* \leq \delta^{SC}$ is at most
\begin{equation}\label{eq:sharp_const_complexity_p1}
\max\left\{1,\left\lceil \frac{f(x_{0,0}) - f^* - \frac{4\beps}{\beta}}{\left(\frac{1}{v} - 1\right)\beps} \right\rceil + 1\right\} \cdot \left(\left\lceil{\frac{16}{(1-\beta)^2}\left(\frac{G^2}{(1-v)\mu^2} - 1\right)}\right\rceil + 1\right).
\end{equation}
This proves the theorem in the case that $\frac{1}{2}\le v<1$.

When $0 < v < \frac{1}{2}$, let's denote $T_{SC}$ as the first outer iteration such that $f(\xk{0}) - f^* < \frac{\mu^2}{\rho}$. From the same logic of \eqref{eq.sharp_const_iter_bound_1} and the definition of $\rho := \frac{\beta\mu^2 v}{2\beps}$, we know 
\begin{equation*}
T_{SC} \leq \max\left\{0,\left\lceil \frac{f(x_{0,0}) - f^* - \frac{\mu^2}{\rho}}{\left(\frac{1}{v} - 1\right)\beps} \right\rceil\right\} + 1 = \max\left\{0,\left\lceil \frac{f(x_{0,0}) - f^* - \frac{2\beps}{\beta v}}{\left(\frac{1}{v} - 1\right)\beps} \right\rceil\right\} + 1.
\end{equation*}
Moreover, from \eqref{eq.sharp_const_inner_bound_1} and \eqref{eq:sharp_const_complexity_p1}, each outer iteration $0\leq k < T_{SC}$ needs at most $\left\lceil{\frac{16}{(1-\beta)^2}\left(\frac{G^2}{(1-v)\mu^2} - 1\right)}\right\rceil + 1$ inner iterations to take a serious step, and the total number of inner steps (including null steps) for $0\leq k < T_{SC}$ is at most $\max\left\{1,\left\lceil \frac{f(x_{0,0}) - f^* - \frac{4\beps}{\beta}}{\left(\frac{1}{v} - 1\right)\beps} \right\rceil + 1\right\} \cdot \left(\left\lceil{\frac{16}{(1-\beta)^2}\left(\frac{G^2}{(1-v)\mu^2} - 1\right)}\right\rceil + 1\right)$.

It follows from the definition of $T_{SC}$ that $f(x_{T_{SC},0}) - f^* < \frac{\mu^2}{\rho}$. Next, we show that $f(\xk{0}) - f^* < \frac{\mu^2}{\rho}$ for all $k \geq T_{SC}$. For any iteration $k \geq T_{SC}$ satisfying $f(\xk{0}) - f^* < \frac{\mu^2}{\rho}$,
it follows from Lemma~\ref{lem:proximal_bound_sharp} and the result of \textbf{Part 2} (see \eqref{eq.obj_decay}) that
\begin{equation}\label{eq:thm3_keylem}
f(x_{k+1,0}) - f^* \leq f(\xk{0}) - f^* - (\beta\Delta_k - \beps) \leq f(\xk{0}) - f^* - \left(\frac{\beta}{2}(f(\xk{0}) - f^*) - \bar\epsilon\right).
\end{equation}
Combining \eqref{eq:thm3_keylem} and $f(\xk{0}) - f^* > \delta^{SC} = \frac{4\bar\epsilon}{\beta}$, we know that $f(x_{k+1,0}) < f(\xk{0})$, thus $f(\xk{0})$ monotonically decreases in $k$. This showcases by induction that $f(\xk{0}) - f^* < \frac{\mu^2}{\rho}$ for all $k \geq T_{SC}$ until an iterate $\xk{0}$ satisfying $f(\xk{0}) - f^* \leq \delta^{SC}$ is found. 

Moreover, \eqref{eq:thm3_keylem} implies that
\begin{equation*}
\begin{aligned}
&f(x_{k+1,0}) - f^* - \frac{2\bar\epsilon}{\beta} \leq \left(1 - \frac{\beta}{2}\right)\left(f(\xk{0}) - f^* - \frac{2\bar\epsilon}{\beta}\right).
\end{aligned}
\end{equation*} 
Then it at most takes us a total of $\left\lceil -\log_{\left(1-\frac{\beta}{2}\right)}\left(\frac{\frac{\mu^2}{\rho} - \frac{2\bar\epsilon}{\beta}}{\frac{2\bar\epsilon}{\beta}}\right) \right\rceil = \left\lceil -\log_{\left(1-\frac{\beta}{2}\right)}\left( \frac{1}{v} - 1 \right) \right\rceil$ number of outer iterations after $T_{SC}$ to find an iterate such that $f(\xk{0}) - f^* \leq \delta^{SC}$. We denote $K_{SC}\geq 0$ as the last outer iteration such that $\delta^{SC} < f(x_{K_{SC},0}) - f^* < \frac{\mu^2}{\rho}$. Then for any $k \geq 0$ with $\delta^{SC} < f(\xk{0}) - f^* < \frac{\mu^2}{\rho}$, following Lemma~\ref{lem:proximal_bound_sharp} and the result of \textbf{Part 1} (see \eqref{eq.Tk_bound}), it at most uses
\begin{equation*}
\begin{aligned}
&\left\lceil{\frac{8G^2}{\rho(1-\beta)^2(\Delta_k - \epsilon_1 - \epsilon_2)} - \frac{16}{(1-\beta)^2}}\right\rceil \leq \left\lceil{\frac{8G^2}{\rho(1-\beta)^2\left(\frac{f(\xk{0}) - f^*}{2} - \frac{\bar\epsilon}{\beta}\right)}} - \frac{16}{(1-\beta)^2}\right\rceil \\
\leq \ &{\frac{16G^2}{\rho(1-\beta)^2\left(f(\xk{0}) - f^* - \frac{2\bar\epsilon}{\beta}\right)} } \leq {\frac{16G^2}{\rho(1-\beta)^2\left(1 - \frac{\beta}{2}\right)^{k - K_{SC}}\left(f(x_{K_{SC},0}) - f^* - \frac{2\bar\epsilon}{\beta}\right)} } \\
< \ &\left(1 - \frac{\beta}{2}\right)^{K_{SC} - k}\cdot\left(\frac{16G^2}{\rho(1-\beta)^2\left(\delta^{SC} - \frac{2\bar\epsilon}{\beta}\right)} \right)
\end{aligned}
\end{equation*}
number of inner iterations to take a serious step. Therefore, in this case, the total number of bundle steps (including null steps) is
\begin{equation*}
\begin{aligned}
&\sum_{k = T_{SC}}^{K_{SC}}\left(1 - \frac{\beta}{2}\right)^{K_{SC} - k}\cdot\left(\frac{16G^2}{\rho(1-\beta)^2\left(\delta^{SC} - \frac{2\bar\epsilon}{\beta}\right)} \right) \\
= \ &\left(\frac{16G^2}{\frac{2\beps\rho}{\beta}(1-\beta)^2}\right)  \cdot \left(\frac{1 - \left(1-\frac{\beta}{2}\right)^{K_{SC} - T_{SC} + 1}}{\frac{\beta}{2}}\right) \leq \frac{32G^2}{\mu^2v\beta(1-\beta)^2}.
\end{aligned}
\end{equation*}
Combining the cases above, we conclude the statement.
\end{proof}

\begin{proof}[Proof of Theorem~\ref{thm:sharp_ideal}]
The proof follows the same structure as the proof of Theorem \ref{thm:const_main}. Without loss of generality, we assume $f(x_{0,0})-f^*>\delta^I$, thus $K^I > 0$.

\textbf{Part 1 (Length of the inner loop)} \quad With the same argument as the first part in the proof of Theorem~\ref{thm:const_main}, we know that \eqref{eq.Tk_bound} holds.

\textbf{Part 2 (Objective decrease on serious iterates)} \quad It follows the same proof as the second part in the proof of Theorem~\ref{thm:const_main} that \eqref{eq.obj_decay} holds.

\textbf{Part 3 (Total outer and inner iterations)} \quad By Assumption~\ref{ass:noise_fg} and Lemma~\ref{lem:proximal_bound_sharp}, the optimal step-size policy implies that
\begin{equation}\label{eq:ideal_sharp_delta_lb}
f(\xk{0}) - f^* \geq \hf_k(\xk{0}) - \epsilon_2 - f^* = \frac{\mu^2}{\rho_k}\quad \text{and}\quad \Delta_k \geq \frac{\mu^2}{2\rho_k}.
\end{equation}
From \eqref{eq.obj_decay} and \eqref{eq:ideal_sharp_delta_lb}, we have that
\begin{equation*}
\begin{aligned}
f(x_{k+1,0}) - f^* &\leq f(\xk{0}) - f^* - (\beta \Delta_k - \beps) \leq f(\xk{0}) - f^* - \left(\frac{\beta\mu^2}{2\rho_k} - \beps\right) \\
&= f(\xk{0}) - f^* - \left(\frac{\beta}{2}(\hf_k(\xk{0})-\epsilon_2-f^*) - \beps \right) \\
&\leq f(\xk{0}) - f^* - \left(\frac{\beta}{2}(f(\xk{0}) - \epsilon_1 -\epsilon_2-f^*) - \beps \right) \\
&= \left(1-\frac{\beta}{2}\right)(f(\xk{0}) - f^*) + \frac{\beta}{2}(\epsilon_1 + \epsilon_2) + \beps \\
&< \left(1-\frac{\beta}{2}\right)(f(\xk{0}) - f^*) + \frac{3\beps}{2},
\end{aligned}
\end{equation*}
where the first equality is from the definition of $\rho_k$, the third inequality uses Assumption~\ref{ass:noise_fg}, and the last inequality follows the definition of $\beps$. Therefore, we have
\begin{equation}\label{eq:sharp_ideal_decay}
f(x_{k+1,0}) - f^* - \frac{3\beps}{\beta} < \left(1-\frac{\beta}{2}\right)\left(f(\xk{0}) - f^* - \frac{3\beps}{\beta}\right).
\end{equation}
Let's denote $K^{SI}\geq 0$ as the first iteration such that $f(\xk{0}) - f^* \leq \delta^{SI}$, then it follows that
\begin{align*}
K^{SI} &\leq \max\left\{0,\left\lceil-\log_{\left(1-\frac{\beta}{2}\right)}\left(\frac{f(x_{0,0}) - f^* - \frac{3\beps}{\beta}}{\delta^{SI} - \frac{3\beps}{\beta}}\right)\right\rceil\right\} \\
&= \max\left\{0,\left\lceil-\log_{\left(1-\frac{\beta}{2}\right)}\left(\frac{f(x_{0,0}) - f^* - \frac{3\beps}{\beta}}{\frac{\beps}{\beta}}\right)\right\rceil\right\} = \mathcal{O}\left(\log\left(\frac{1}{\beps}\right)\right).
\end{align*}
From Assumption~\ref{ass:noise_fg}, the definition of $\beps$, and $f(\xk{0}) - f^* > \delta^{SI} = \frac{4\beps}{\beta}$ for any $k < K^{SI}$, we know
\begin{equation}\label{eq.sharp_ideal_func_gap_lb}
\hf_k(\xk{0}) - \epsilon_2 - f^* \geq f(\xk{0}) - f^* - \epsilon_1 - \epsilon_2 > \delta^{SI} - \frac{\beps}{1+\beta} > \frac{3\beps}{\beta}.
\end{equation}
By \eqref{eq:ideal_sharp_delta_lb} and the definition of $\beps$, the number of inner iterations at outer iteration $k$ could be upper bounded by
\begin{equation*}
\begin{aligned}
&\left\lceil{\frac{8G^2}{\rho_k(1-\beta)^2(\Delta_k - \epsilon_1 - \epsilon_2)} - \frac{16}{(1-\beta)^2}}\right\rceil + 1 \leq \left\lceil{\frac{8G^2}{\rho_k(1-\beta)^2\left(\frac{\mu^2}{2\rho_k} - \epsilon_1 - \epsilon_2\right)} - \frac{16}{(1-\beta)^2}}\right\rceil + 1 \\
\leq \ &\frac{16G^2}{(1-\beta)^2\left(\mu^2 - \frac{2\rho_k\beps}{\beta+1} \right)} = \frac{16G^2}{(1-\beta)^2\mu^2\left(1 - \frac{2\beps}{(\beta+1)(\hf_k(\xk{0}) - \epsilon_2 - f^*)} \right)} \\
< \ &\frac{16G^2}{(1-\beta)^2\mu^2\left(1 - \frac{2\beta}{3(\beta+1)} \right)} = \mathcal{O}(1),
\end{aligned}
\end{equation*}
where the first equality follows the definition of $\rho_k$ and the last inequality uses \eqref{eq.sharp_ideal_func_gap_lb}. After combining all the components above, we may conclude the statement.
\end{proof}

\subsection{Supplemental Proofs for Section~\ref{sec.sample_const}}

We present the proof of
Theorem~\ref{thm:const_sample} here.
\begin{proof}[Proof of Theorem~\ref{thm:const_sample}]

Using Assumption~\ref{ass:sample_fg}, we first prove that $\hf_k(x)$ also has bounded variance and follows a sub-Gaussian distribution. 

Assumption~\ref{ass:sample_fg} states that the variance of $f(x;\xi)$ is no larger than $\sigma^2$. It follows from \eqref{eq:estimation} and \eqref{eq:f_xi} that for all $k\geq 0$ and $x\in X$,
\begin{equation}\label{eq:sample_variance_decrease}
\mathbb{E}[|\hf_k(x) - f(x)|^2] = \mathbb{E}\left[\left|\frac{1}{|S_k|}\sum_{i\in S_k}(f(x;\xi_i) - f(x))\right|^2\right] = \frac{1}{|S_k|^2}\cdot\sum_{i\in S_k}\mathbb{E}[|f(x;\xi_i) - f(x)|^2] \leq \frac{\sigma^2}{|S_k|},
\end{equation}
where the second equality is because $\xi_i$ comes from an independent distribution. Using the same logic above, by \eqref{eq:subGau}, we also have for all $k \geq 0$ and $x\in X$ that
\begin{align}\label{eq:subGau_decrease}
\mathbb{P}\left[|\hf_k(x) - f(x)| \geq \frac{\tau\sigma}{\sqrt{|S_k|}}\right] \leq 2{\rm exp}\left(-\frac{\tau^2}{2}\right)\quad \forall \tau > 0.
\end{align}
From the theorem statement, we know that given any parameter $\tilde{\epsilon}\in (0,1)$ and
\begin{equation}\label{eq:sample_size_requirement}
|S_k| = \max\left\{1, \frac{\sigma^2}{\tilde{\epsilon}^2}\right\},
\end{equation}
\eqref{eq:sample_variance_decrease} and \eqref{eq:subGau_decrease} directly implies that for all $k\geq 0$ and $x\in X$,
\begin{equation}\label{eq:fk_sample_subGauss}
\mathbb{E}[\epsilon_k(x)^2] \leq \tilde{\epsilon}^2\ \ \text{and}\ \  \mathbb{P}\left[\epsilon_k(x) \geq \tau\tilde{\epsilon}\right] \leq 2{\rm exp}\left(-\frac{\tau^2}{2}\right)\ \  \forall \tau > 0, \ \ \text{where } \epsilon_k(x) = |\hf_k(x) - f(x)|.
\end{equation}
\eqref{eq:fk_sample_subGauss} shows that when we choose a small value for parameter $\tilde\epsilon$, $\hf_k(x)$ has a small variance and is close to $f(x)$ with high probability. It's worth mentioning that \eqref{eq:fk_sample_subGauss} plays an important role in the rest of the proof, which includes three parts as follows.

\textbf{Part 1 (Length of the inner loop)} \quad Without loss of generality, let's suppose current outer iteration $k\geq 0$ satisfies $f(\xk{0}) - f^* > {\delta}^S$. Then following the same logic as the first part in the proof of Theorem~\ref{thm:const_main}, we know everything until \eqref{eq:decay_2} holds. 

Moreover, by \eqref{eq.inner_prox_gap} and the definition of $\xk{T_k}$, we have that
\begin{equation}\label{eq:error_sample}
\begin{aligned}
    \tDeltak{T_k-1} &= \hf_k(\xk{0})-\pran{\fk{T_k-1}(\xk{T_k})+\frac{\rho_{k}}{2}\|\xk{T_k}-\xk{0}\|^2} \\
    &= \hf_k(\xk{0})-\min_{x\in X}\left\{\fk{T_k-1}(x) + \frac{\rho_{k}}{2}\|x-\xk{0}\|^2\right\} \\
    &\geq (f(\xk{0}) - \epsilon_k(\xk{0})) - \left(\fk{T_k-1}(\bar{x}_{k+1}) + \frac{\rho_{k}}{2}\|\bar{x}_{k+1} - \xk{0}\|^2\right) \\
    &\geq  (f(\xk{0}) - \epsilon_k(\xk{0})) - \left(f(\bar{x}_{k+1}) + \epsilon_k(\bar{x}_{k+1}) + \frac{\rho_{k}}{2}\|\bar{x}_{k+1} - \xk{0}\|^2\right) \\
    &= \Delta_{k}-\epsilon_k(\xk{0})-\epsilon_k(\bar{x}_{k+1}),
\end{aligned}
\end{equation}
where the first inequality is from \eqref{eq:fk_sample_subGauss}, the second inequality follows Assumption~\ref{ass:bundle_func} and \eqref{eq:fk_sample_subGauss}, and the last equality uses \eqref{eq.Delta_k}. Using the same logic as the proof of Theorem~\ref{thm:const_main}, the number of inner iterations at the $k$-th outer iteration could be upper bounded by 
\begin{equation}\label{eq.Tk_bound_sample}
\begin{aligned}
T_k - 1 \leq \left\lceil{\frac{\frac{1}{\Delta_k -\epsilon_k(\xk{0})-\epsilon_k(\bar{x}_{k+1})} - \frac{2\rho_k}{G^2}}{\frac{\rho_k(1-\beta)^2}{8G^2}}}\right\rceil \leq \left\lceil{\frac{8G^2}{\rho_k(1-\beta)^2(\Delta_k - \epsilon_k(\xk{0})-\epsilon_k(\bar{x}_{k+1}))} - \frac{16}{(1-\beta)^2}}\right\rceil,
\end{aligned}
\end{equation}
which provides an upper bound for $T_k$.

\textbf{Part 2 (Objective decrease on serious iterates)} \quad Similar to the logic of the second part in the proof of Theorem~\ref{thm:const_main}, by \eqref{eq:fk_sample_subGauss} and the definition of serious step, we have
\begin{equation}\label{eq.obj_decay_sample}
\begin{aligned}
    f(x_{k+1,0})&\le \hf_{k}(x_{k+1,0})+\epsilon_k(x_{k+1,0}) = \hf_{k}(x_{k,T_k})+\epsilon_k(x_{k+1,0}) \\
    &\le \hf_k(\xk{0}) - \beta(\hf_k(\xk{0})-\fk{T_k-1}(\xk{T_k}))+\epsilon_k(x_{k+1,0})\\
    &\le \hf_k(\xk{0}) - \beta\pran{\hf_k(\xk{0})-\pran{\fk{T_k-1}(\xk{T_k})+\frac{\rho_k}{2}\|\xk{T_k}-\xk{0}\|^2}}+\epsilon_k(x_{k+1,0})\\
    &\le \hf_k(\xk{0}) - \beta\pran{\hf_k(\xk{0})-\pran{\fk{T_k - 1}(\bx_{k+1})+\frac{\rho_k}{2}\|\bx_{k+1}-\xk{0}\|^2}}+\epsilon_k(x_{k+1,0})\\
    &\le \hf_k(\xk{0}) - \beta\pran{(f(\xk{0}) - \epsilon_k(\xk{0}))-\pran{f(\bx_{k+1})+\frac{\rho_k}{2}\|\bx_{k+1}-\xk{0}\|^2+\epsilon_k(\bx_{k+1})}}+\epsilon_k(x_{k+1,0})\\
    & = \hf_k(\xk{0}) - \beta\Delta_k+\beta(\epsilon_k(\xk{0})+\epsilon_k(\bx_{k+1}))+\epsilon_k(x_{k+1,0}) \\
    & \le f(\xk{0}) - \beta\Delta_k + (\beta+1)\epsilon_k(\xk{0})+\beta\epsilon_k(\bx_{k+1})+\epsilon_k(x_{k+1,0}) \\
    & = f(\xk{0})-\left(\beta \Delta_k - \beps_k \right),
\end{aligned}
\end{equation}
where the fourth inequality follows the update rule of $x_{k,T_k}$, the fifth inequality is from Assumption~\ref{ass:bundle_func} and \eqref{eq:fk_sample_subGauss}, the second equality follows \eqref{eq.Delta_k}, and in the last equality we define
\begin{equation}\label{eq:beps_k}
\beps_k = (\beta+1)\epsilon_k(\xk{0})+\beta\epsilon_k(\bx_{k+1})+\epsilon_k(x_{k+1,0}).
\end{equation}

\textbf{Part 3 (Total outer and inner iterations)} \quad 
We note that for any outer iteration $k\geq 0$, it follows \eqref{eq:fk_sample_subGauss} that
\begin{align}\label{eq:bounded_err_prob}
\mathbb{P}\left[\max\left\{\epsilon_k(\xk{0}), \epsilon_k(\bar{x}_{k+1}),\epsilon_k(x_{k+1,0})\right\}< \tau\tilde{\epsilon}\right] \geq 1-6{\rm exp}\left(-\frac{\tau^2}{2}\right)\quad \forall \tau > 0.
\end{align}
In the rest of this proof, we only consider the case that $\max\left\{\epsilon_k(\xk{0}), \epsilon_k(\bar{x}_{k+1}),\epsilon_k(x_{k+1,0})\right\}< \tau\tilde{\epsilon}$ holds for all $k < K^S$, from \eqref{eq:bounded_err_prob}, which happens with probability at least $1-6K^S{\rm exp}\left(-\frac{\tau^2}{2}\right)$.

Next, we prove the theorem by contradiction. Let's suppose for all outer iterations $k \leq K^S$ it holds that $f(\xk{0})-f^* > {\delta}^S$. By the step size policy and \eqref{eq:fk_sample_subGauss}, we know 
\begin{align*}
\rho_kD^2 = \hf_k(\xk{0})-\tau\tilde{\epsilon}-f^* < \hf_k(\xk{0})- \epsilon_k(\xk{0}) - f^* \leq f(\xk{0}) - f^*.
\end{align*}
Then we have from \eqref{eq:proximal_bound} that for all $k\geq 0$,
\begin{equation}\label{eq:sample-Deltak}
\begin{aligned}
    \Delta_k \ge  \frac{f(\xk{0})-f^*}{2}. 
\end{aligned}
\end{equation}
Combining \eqref{eq.obj_decay_sample} and \eqref{eq:sample-Deltak}, we have 
\begin{equation}\label{eq:sample_func_decay_ideal}
f(x_{k+1,0}) - f^* \leq \left(1-\frac{\beta}{2}\right)\cdot(f(\xk{0}) - f^*) + \beps_k.
\end{equation}
Meanwhile, from \eqref{eq:beps_k} and the pre-assumption that $\max\left\{\epsilon_k(\xk{0}), \epsilon_k(\bar{x}_{k+1}),\epsilon_k(x_{k+1,0})\right\}< \tau\tilde{\epsilon}$ holds for all $k < K^S$, we know for every outer iteration $k < K^S$,
\begin{equation}\label{eq:sample_prob_intermediate_1}
\beps_k \leq 2(\beta+1)\tau\tilde{\epsilon}.
\end{equation} 
Thus, it follows \eqref{eq:sample_func_decay_ideal} and \eqref{eq:sample_prob_intermediate_1} that for any $k < K^S$, 
\begin{equation}\label{eq:sample_ideal_suff_decay}
\begin{aligned}
f(x_{k+1,0}) - f^* &\leq \left(1-\frac{\beta}{2}\right)\cdot(f(\xk{0}) - f^*) +\beps_k \\
&\leq \left(1-\frac{\beta}{2}\right)\cdot(f(\xk{0}) - f^*) + 2(\beta+1)\tau\tilde{\epsilon} \\
&< \left(1-\frac{\beta}{4}\right)\cdot(f(\xk{0}) - f^*),
\end{aligned}
\end{equation}
where the last inequality follows \eqref{eq:const_neighborhood_sample} that $f(\xk{0})-f^* > {\delta}^S = \frac{8(\beta+1)\tilde{\epsilon}\tau}{\beta}$.
Therefore, by considering all outer iterations $k < K^S$, from \eqref{eq:const_neighborhood_sample} we know with a probability at least $1-6K^S{\rm exp}\left(-\frac{\tau^2}{2}\right)$ that
\begin{align*}
f(x_{K^S,0})-f^* < \left(1-\frac{\beta}{4}\right)^{K^S}(f(x_{0,0}) - f^*) \leq {\delta}^S,
\end{align*}
which results in a contradiction. Therefore, with a probability at least $1-6K^S{\rm exp}\left(-\frac{\tau^2}{2}\right)$, there must exist at least one $k\leq K^S$ such that $f(\xk{0}) - f^* \leq {\delta}^S$. 

Let's denote $K^S_{\max}\geq 0$ as the first outer iteration such that $f(x_{K^S_{\max},0}) - f^* > {\delta}^S$ and $f(x_{K^S_{\max}+1,0}) - f^* \leq {\delta}^S$. Under the pre-assumption of $\max\{\epsilon_k(\xk{0}), \epsilon_k(\bar{x}_{k+1}),\epsilon_k(x_{k+1,0})\}< \tau\tilde{\epsilon}$ for all $k<K^S$, which happens with a probability at least $1-6K^S{\rm exp}\left(-\frac{\tau^2}{2}\right)$, we have $\tilde{K}_{\max}^I < K^S$, and it also follows \eqref{eq:sample-Deltak} that 
\begin{equation}\label{eq:sample_Deltak_lb_1}
\Delta_{K^S_{\max}} > \frac{f(x_{K^S_{\max},0})-f^*}{2} > \frac{{\delta}^S}{2} > 2\tau\tilde{\epsilon} + 2\tau\tilde{\epsilon} > 2\tau\tilde{\epsilon} + \epsilon_{K^S_{\max}}(x_{K^S_{\max},0})+\epsilon_{K^S_{\max}}(\bar{x}_{K^S_{\max}+1}),
\end{equation}
where the third inequality follows \eqref{eq:const_neighborhood_sample}. Moreover, under the condition of $\epsilon_{K^S_{\max}}(x_{K^S_{\max},0}) < \tau\tilde{\epsilon}$, it follows from the step size policy and \eqref{eq:fk_sample_subGauss} that 
\begin{equation}\label{eq:sample_rho_Kmax_lb}
\rho_{K^S_{\max}} = \frac{\hf_{K^S_{\max}}(x_{K^S_{\max},0})-\tau\tilde{\epsilon}-f^*}{D^2} \geq \frac{f(x_{K^S_{\max},0}) - f^* - \epsilon_{K^S_{\max}}(x_{K^S_{\max},0}) - \tau\tilde{\epsilon}}{D^2} > \frac{{\delta}^S - 2\tau\tilde{\epsilon}}{D^2} > \frac{6\tau\tilde{\epsilon}}{\beta D^2},
\end{equation}
where the last inequality is from \eqref{eq:const_neighborhood_sample}. Furthermore, from \eqref{eq:sample_ideal_suff_decay} and the same logic in \eqref{eq:sample_Deltak_lb_1} and \eqref{eq:sample_rho_Kmax_lb}, with probability at least $1-6K^S{\rm exp}\left(-\frac{\tau^2}{2}\right)$ that for any $k\in\{0,\ldots,\tilde{K}_{\max}^I\}$,
\begin{equation}\label{eq:sample_lb_Deltak_rhok}
\begin{aligned}
\Delta_{k}& \ge \frac{1}{2}(f(x_{k,0})-f^*) \geq \frac{f(x_{K^S_{\max},0})-f^*}{2(1-\frac{\beta}{4})^{(K^S_{\max}-k)}} > \frac{{\delta}^S}{2(1-\frac{\beta}{4})^{(K^S_{\max}-k)}} \\
&= \frac{4(\beta+1)\tilde\epsilon\tau}{\beta(1-\frac{\beta}{4})^{(K^S_{\max}-k)}} > \frac{2\tilde\epsilon\tau}{(1-\frac{\beta}{4})^{(K^S_{\max}-k)}} + \epsilon_k(\xk{0}) +\epsilon_k(\bar{x}_{k+1}), \\
\text{and}\quad \rho_{k} &= \frac{\hf_{k}(x_{k,0})-\tau\tilde\epsilon-f^*}{D^2} \geq \frac{f(x_{k,0}) - f^* - \epsilon_k(\xk{0}) -\tau\tilde\epsilon}{D^2} \\
&\geq \frac{\frac{f(x_{K^S_{\max},0})-f^*}{(1-\frac{\beta}{4})^{(K^S_{\max}-k)}} - \epsilon_k(\xk{0}) -\tau\tilde\epsilon}{D^2} > \frac{\frac{{\delta}^S}{(1-\frac{\beta}{4})^{(K^S_{\max}-k)}} - \epsilon_k(\xk{0}) -\tau\tilde\epsilon}{D^2} \\
&> \frac{6\tau\tilde\epsilon}{\beta(1-\frac{\beta}{4})^{(K^S_{\max}-k)} D^2}.
\end{aligned}
\end{equation}
Therefore, by \eqref{eq.Tk_bound_sample} and \eqref{eq:sample_lb_Deltak_rhok}, we know with probability at least $1-6K^S{\rm exp}\left(-\frac{\tau^2}{2}\right)$, the total number of null steps is at most
\begin{align*}
&\sum_{k=0}^{K^S_{\max}} \left(\left\lceil{\frac{8G^2}{\rho_k(1-\beta)^2(\Delta_k - \epsilon_k(\xk{0})-\epsilon_k(\bar{x}_{k+1}))} - \frac{16}{(1-\beta)^2}}\right\rceil + 1\right) \\
< \ &\sum_{k=0}^{K^S_{\max}} \left(\left\lceil{\frac{8G^2}{\frac{6\tau\tilde\epsilon}{\beta(1-\frac{\beta}{4})^{(K^S_{\max}-k)} D^2}\cdot(1-\beta)^2\cdot\frac{2\tau\tilde\epsilon}{(1-\frac{\beta}{4})^{(K^S_{\max}-k)}}} - \frac{16}{(1-\beta)^2}}\right\rceil + 1\right) \\
= \ &\sum_{k=0}^{K^S_{\max}} \left(\left\lceil{\frac{2\beta(1-\frac{\beta}{4})^{2(K^S_{\max}-k)}G^2D^2}{3(1-\beta)^2\tau^2\tilde\epsilon^2} - \frac{16}{(1-\beta)^2}}\right\rceil + 1\right) \\
\leq \ &\sum_{k=0}^{K^S_{\max}} \left(\frac{2\beta(1-\frac{\beta}{4})^{2(K^S_{\max}-k)}G^2D^2}{3(1-\beta)^2\tau^2\tilde\epsilon^2}\right) \\
= \ &\frac{2\beta G^2D^2}{3(1-\beta)^2\tau^2\tilde\epsilon^2}\cdot\frac{1-(1-\frac{\beta}{4})^{2(K^I_{\max}+1)}}{1-(1-\frac{\beta}{4})^2} < \frac{8G^2D^2}{3(1-\beta)^2(2-\frac{\beta}{4})\tau^2\tilde\epsilon^2},
\end{align*}
which concludes the statement.
\end{proof}

Based on the proof of Theorem~\ref{thm:const_sample}, we present the proof of Corollary~\ref{cor:sample_complexity} by choosing suitable parameters.

\begin{proof}[Proof of Corollary~\ref{cor:sample_complexity}] 
When $f(x_{0,0}) - f^* \leq \delta^S$, then the initial iterate is already $\delta^S$-optimal. Therefore, in the rest of this proof, we only consider the case that $f(x_{0,0}) - f^* > \delta^S$.
For any $\delta^S \in (0,1)$, following the corollary statement, we set
\begin{equation}\label{eq:sample_tau_teps_selection}
\begin{aligned}
\tau &= \max\left\{1,\sqrt{2\log\left(\frac{\max\left\{1,6\cdot\left\lceil\log_{\left(1-\frac{\beta}{4}\right)}\left(\frac{\delta^S}{f(x_{0,0}) - f^*}\right)\right\rceil \right\}}{\zeta}\right)}\right\}  = \mathcal{O}\left(\sqrt{\log\left(\log\left(\frac{1}{\delta^S}\right)\right)}\right) \\
\text{and}\ \ \tilde\epsilon &= \frac{\beta\delta^S}{8(\beta+1)\tau} = \mathcal{O}(\delta^S).
\end{aligned}
\end{equation}
Then we know from \eqref{eq:sample_tau_teps_selection} and \eqref{eq:const_neighborhood_sample} that 
\begin{align*}
1-6K^S{\rm exp}\left(-\frac{\tau^2}{2}\right) &\geq 1-6K^S{\rm exp}\left(-\log\left(\frac{\max\left\{1,6\cdot\left\lceil\log_{\left(1-\frac{\beta}{4}\right)}\left(\frac{\delta^S}{f(x_{0,0}) - f^*}\right)\right\rceil \right\}}{\zeta}\right)\right) \\
&= 1-\frac{6K^S\zeta}{\max\left\{1,6\cdot\left\lceil\log_{\left(1-\frac{\beta}{4}\right)}\left(\frac{\delta^S}{f(x_{0,0}) - f^*}\right)\right\rceil \right\}} \\
&= 1-\frac{\zeta\cdot\max\left\{0, 6\cdot\left\lceil\log_{\left(1-\frac{\beta}{4}\right)}\left(\frac{\delta^S}{f(x_{0,0}) - f^*}\right)\right\rceil\right\}}{\max\left\{1,6\cdot\left\lceil\log_{\left(1-\frac{\beta}{4}\right)}\left(\frac{\delta^S}{f(x_{0,0}) - f^*}\right)\right\rceil \right\}} \geq 1-\zeta.
\end{align*}
Moreover, it follows \eqref{eq:total-null-const} and \eqref{eq:sample_tau_teps_selection} that with probability at least $1-\zeta$, Algorithm~\ref{alg.main} with the optimal step size policy needs at most 
\begin{equation}\label{eq:sample_proof_inner_ub}
\frac{8G^2D^2}{3(1-\beta)^2(2-\frac{\beta}{4})\tau^2\tilde\epsilon^2} = \frac{256G^2D^2(\beta+1)^2}{3(1-\beta)^2(2-\frac{\beta}{4})\beta^2\left(\delta^S\right)^2} = \mathcal{O}\left(\left(\frac{1}{\delta^S}\right)^2\right)
\end{equation}
number of bundle steps (including null steps) to find an $\delta^S$-optimal iterate. By \eqref{eq:sample_proof_inner_ub}, \eqref{eq:sample_size_requirement}, and \eqref{eq:sample_tau_teps_selection}, we further know that with probability at least $1-\zeta$, at most 
\begin{align*}
&\max\left\{1, \frac{\sigma^2}{\tilde{\epsilon}^2}\right\} \cdot \frac{256G^2D^2(\beta+1)^2}{3(1-\beta)^2(2-\frac{\beta}{4})\beta^2\left(\delta^S\right)^2}\\
= \ &\max\left\{\frac{256G^2D^2(\beta+1)^2}{3(1-\beta)^2(2-\frac{\beta}{4})\beta^2\left(\delta^S\right)^2}, \frac{256\sigma^2G^2D^2(\beta+1)^2}{3(1-\beta)^2(2-\frac{\beta}{4})\beta^2\tilde\epsilon^2\left(\delta^S\right)^2}\right\} \\
= \ &\max\left\{\frac{256G^2D^2(\beta+1)^2}{3(1-\beta)^2(2-\frac{\beta}{4})\beta^2\left(\delta^S\right)^2}, \frac{16384\sigma^2G^2D^2(\beta+1)^4\tau^2}{3(1-\beta)^2(2-\frac{\beta}{4})\beta^4\left(\delta^S\right)^4}\right\} \\
= \ &\mathcal{O}\left(\frac{\log\left(\log\left(\frac{1}{\delta^S}\right)\right)}{\left(\delta^S\right)^4}\right)
\end{align*}
number of $f(x;\xi)$ samples are needed to find an $\delta^S$-optimal iterate. 
\end{proof}

\end{document}